\renewcommand*{\backref}[1]{}
\renewcommand*{\backrefalt}[4]{
  \ifcase #1 %
   [No citations.]%
  \or
   [#2]%
  \else
   [#2]%
  \fi
}
\newcommand{\calA}{\mathcal{A}}
\newcommand{\calB}{\mathcal{B}}
\newcommand{\calC}{\mathcal{C}}
\newcommand{\calF}{\mathcal{F}}
\newcommand{\calH}{\mathcal{H}}
\newcommand{\calK}{\mathcal{K}}
\newcommand{\calM}{\mathcal{M}}
\newcommand{\calS}{\mathcal{S}}
\newcommand{\calT}{\mathcal{T}}
\newcommand{\calX}{\mathcal{X}}
\newcommand{\calY}{\mathcal{Y}}
\newcommand{\HH}{\mathbb{H}}
\newcommand{\NN}{\mathbb{N}}
\newcommand{\QQ}{\mathbb{Q}}
\newcommand{\RR}{\mathbb{R}}
\newcommand{\ZZ}{\mathbb{Z}}
\renewcommand{\setminus}{{\smallsetminus}}
\newcommand{\st}{\mathbin{\mid}} 
\newcommand{\from}{\colon} 
\newcommand{\nin}{\mathbin{\notin}}
\newcommand{\homeo}{\mathrel{\cong}} 
\newcommand{\isom}{\cong} 
\newcommand{\closure}[1]{{\overline{#1}}}
\newcommand{\bdy}{\partial} 
\newcommand{\dual}{\operatorname{dual}}
\newcommand{\diam}{\operatorname{diam}} 
\newcommand{\MCG}{\mathcal{MCG}} 
\newcommand{\Teich}{{Teichm\"uller~}}
\newcommand{\refsec}[1]{Section~\ref{Sec:#1}}
\newcommand{\refthm}[1]{Theorem~\ref{Thm:#1}}
\newcommand{\refcor}[1]{Corollary~\ref{Cor:#1}}
\newcommand{\reflem}[1]{Lemma~\ref{Lem:#1}}
\newcommand{\refprop}[1]{Proposition~\ref{Prop:#1}}
\newcommand{\refclm}[1]{Claim~\ref{Clm:#1}}
\newcommand{\refrem}[1]{Remark~\ref{Rem:#1}}
\newcommand{\reffig}[1]{Figure~\ref{Fig:#1}}
\theoremstyle{plain}
\numberwithin{equation}{section} 
\newtheorem{theorem}[equation]{Theorem}
\newtheorem{corollary}[equation]{Corollary}
\newtheorem{lemma}[equation]{Lemma}
\newtheorem{proposition}[equation]{Proposition}
\theoremstyle{definition}
\newtheorem{definition}[equation]{Definition}
\newtheorem{remark}[equation]{Remark}
\newtheorem*{remark*}{Remark}
\newtheorem{claim}[equation]{Claim}
\newtheorem*{claim*}{Claim}
\newtheorem*{question*}{Question}
\newtheorem*{answer*}{Answer}
\newtheorem*{case*}{Case}
\newtheorem*{application*}{Application}
\theoremstyle{definition}
\newtheorem{case}{Case}
\newcommand{\fakeenv}{} 
\newenvironment{restate}[2]  
{ 
 \renewcommand{\fakeenv}{#2} 
 \theoremstyle{plain} 
 \newtheorem*{\fakeenv}{#1~\ref{#2}} 
 \begin{\fakeenv}
}
{
 \end{\fakeenv}
}
\newcommand{\carr}{\prec}
\renewcommand{\dual}{\pitchfork} 
\newcommand{\eff}{\dashv} 
\newcommand{\Drift}{{\sf K_0}}
\newcommand{\Wide}{{\sf K_1}}
\newcommand{\CutOff}{{\sf C}}
\newcommand{\Orbit}{{\sf N}}
\newcommand{\Inj}{{\sf N_0}}
\newcommand{\NoMotion}{{\sf N_1}}
\newcommand{\SplitSub}{{\sf N_2}}
\newcommand{\Branch}{{\sf B_0}}
\newcommand{\Reverse}{{\sf R_0}}
\newcommand{\Radius}{{\sf R_1}}
\newcommand{\Volume}{{\sf V}}
\newcommand{\Err}{{\sf E}}
\newcommand{\Induct}{{\sf T_0}}
\newcommand{\Straight}{{\sf T_1}}
\newcommand{\Ind}{{\sf Ind}}
\newcommand{\Long}{{\sf Long}}
\newcommand{\Short}{{\sf Short}}
\newcommand{\ind}{\operatorname{index}}
\newcommand{\card}{\operatorname{card}}
\newcommand{\AC}{\mathcal{AC}}
\begin{document}

\title{On train track splitting sequences}

\author[Masur]{Howard Masur}
\address{\hskip-\parindent
        Department of Mathematics\\
        University of Chicago\\
        Chicago, IL 60607, USA}
\email{masur@math.uchicago.edu}
\thanks{Masur and Mosher are grateful to the National Science Foundation for
  its suport.}

\author[Mosher]{Lee Mosher}
\address{\hskip-\parindent
        Department of Mathematics\\
        Rutgers-Newark, State University of New Jersey\\
        Newark, NJ  07102, USA}
\email{mosher@rutgers.edu}

\author[Schleimer]{Saul Schleimer}
\address{\hskip-\parindent
        Department of Mathematics\\
        University of Warwick\\
        Coventry, CV4 7AL, UK}
\email{s.schleimer@warwick.ac.uk}
\thanks{This work is in the public domain.}

\date{\today}

\begin{abstract}
We show that the subsurface projection of a train track splitting
sequence is an unparameterized quasi-geodesic in the curve complex of
the subsurface.  For the proof we introduce {\em induced tracks}, {\em
efficient position}, and {\em wide curves}.

This result is an important step in the proof that the disk complex is
Gromov hyperbolic.  As another application we show that train track
sliding and splitting sequences give quasi-geodesics in the train
track graph, generalizing a result of Hamenst\"adt [Invent.\ Math.].
\end{abstract}


\maketitle

\section{Introduction}
\label{Sec:Intro}

Train track splitting sequences are an integral part of the study of
surface diffeomorphisms~\cite{Thurston02, PennerHarer92, Mosher03}.
There are links between splitting sequences of tracks and the curves
they carry, on the one hand, and \Teich geodesics and measured
foliations on the other. Of particular importance, in light of the
hierarchy machine~\cite{MasurMinsky00} and the closely related control
of \Teich geodesics~\cite{Rafi05, Rafi07a}, is to understand how a
splitting sequence interacts with subsurface projection.  We give a
detailed structure theorem (\refthm{FellowTravel}) that explains this
interaction.  Our main application is the following result, needed for
Masur and Schleimer's work on the disk complex.

\begin{restate}{Theorem}{Thm:LocalUnparam}
For any surface $S$ with $\xi(S) \geq 1$ there is a constant $Q =
Q(S)$ with the following property: For any sliding and splitting
sequence $\{ \tau_i \}_{i=0}^N$ of birecurrent train tracks in $S$ and
for any essential subsurface $X \subset S$ if $\pi_X(\tau_N) \neq
\emptyset$ then the sequence $\{ \pi_X(\tau_i) \}_{i=0}^N$ is a
$Q$--unparameterized quasi-geodesic in the curve complex $\calC(X)$.
\end{restate}

As another application of \refthm{FellowTravel} we generalize, via a
very different proof, a result of Hamenst\"adt~\cite[Corollary
3]{Hamenstadt09}:

\begin{restate}{Theorem}{Thm:TT}
For any surface $S$ with $\xi(S) \geq 1$ there is a constant $Q =
Q(S)$ with the following property: If $\{ \tau_i \}_{i=0}^N$ is a
sliding and splitting sequence in the train track graph $\calT(S)$,
injective on slide subsequences, then $\{ \tau_i \}$ is a
$Q$--quasi-geodesic.
\end{restate}

For the proof of \refthm{FellowTravel} we introduce {\em induced
tracks}, {\em efficient position}, and {\em wide curves}.  For any
essential subsurface $X \subset S$ and track $\tau \subset S$ there is
an induced track $\tau|X$.  Induced tracks generalize the notion of
subsurface projection of curves.  Efficient position of a curve with
respect to a track $\tau$ is a simultaneous generalization of curves
carried by $\tau$ and curves dual to $\tau$ (called {\em hitting
$\tau$ efficiently} in~\cite{PennerHarer92}).  Efficient position of
$\bdy X$ allows us to pin down the location of the induced track
$\tau|X$.  Wide curves are our combinatorial analogue of curves of
definite modulus in a Riemann surface.  The structure theorem
(\ref{Thm:FellowTravel}) then implies \refthm{LocalUnparam}: this,
together with subsurface projection, controls the motion of a
splitting sequence through the complex of curves $\calC(X)$.

\refthm{TT}, our second application of the structure theorem, is a
direct consequence of \refthm{Quasi}, stated in terms of the marking
graph.  \refthm{Quasi} requires a delicate induction proof
conceptually similar to the hierarchy machine developed
in~\cite{MasurMinsky00}.  We do not deduce \refthm{Quasi} directly
from the results of~\cite{MasurMinsky00}; in particular it is not
known if splitting sequences fellow-travel resolutions of hierarchies.

\subsection*{Acknowledgments} 

We thank Yair Minsky for enlightening conversations. 

\section{Background}
\label{Sec:Back}

We provide the definitions needed for \refthm{FellowTravel} and its
corollaries. 

\subsection{Coarse geometry} 

Suppose that $Q \geq 1$ is a real number.  For real numbers $r, s$ we
write $r \leq_Q s$ if $r \leq Qs + Q$ and say that $r$ is {\em
quasi-bounded} by $s$.  We write $r =_Q s$ if $r \leq_Q s$ and $s
\leq_Q r$; this is called a {\em quasi-equality}.

For a metric space $(\calX, d_\calX)$ and finite diameter subsets $A,
B \subset \calX$ define $d_\calX(A, B) = \diam_\calX(A \cup B)$.
Following Gromov~\cite{Gromov87}, a relation $f \from \calX \to \calY$
of metric spaces is a {\em $Q$--quasi-isometric embedding} if for all
$x, y \in \calX$ we have $d_\calX(x,y) =_Q d_\calY(f(x),f(y))$.  (Here
$f(x) \subset \calY$ is the set of points related to $x$.)  If,
additionally, the $Q$--neighborhood of $f(\calX)$ equals $\calY$ then
$f$ is a {\em $Q$--quasi-isometry} and $\calX$ and $\calY$ are {\em
quasi-isometric}.

If $[m,n]$ is an interval in $\ZZ$ and $f \from [m,n] \to \calY$ is a
quasi-isometric embedding then $f$ is a {\em $Q$--quasi-geodesic}.
Now suppose that $Q > 1$ is a real number, $[m,n]$ and $[p,q]$ are
intervals in $\ZZ$, and $f \from [m,n] \to \calY$ is a relation.  Then
$f$ is a {\em $Q$--unparameterized quasi-geodesic} if there is a
strictly increasing function $\rho \from [p,q] \to [m,n]$ so that $f
\circ \rho$ is a $Q$--quasi-geodesic and for all $i \in [p, q-1]$ the
diameter of $f \left( \big[ \rho(i), \rho(i+1) \big] \right)$ is at
most $Q$.

\subsection{Surfaces, arcs, and curves} 

Let $S = S_{g,n}$ be a compact, connected, orientable surface of genus
$g$ with $n$ boundary components.  The {\em complexity} of $S$ is
$\xi(S) = 3g - 3 + n$.  A {\em curve} in $S$ is an embedding of the
circle into $S$.  An {\em arc} in $S$ is a proper embedding of the
interval $[0,1]$ into $S$.  A curve or arc $\alpha \subset S$ is {\em
trivial} if $\alpha$ separates $S$ and one component of $S \setminus
\alpha$ is a disk; otherwise $\alpha$ is {\em essential}.  A curve
$\alpha$ is {\em peripheral} if $\alpha$ separates $S$ and one
component of $S \setminus \alpha$ is an annulus; otherwise $\alpha$ is
{\em non-peripheral}.  A connected subsurface $X \subset S$ is {\em
essential} if every component of $\bdy X$ is essential in $S$ and $X$
is neither a pair of pants ($S_{0,3}$) nor a peripheral annulus (the
core curve is peripheral).  Note that $X$ inherits an orientation from
$S$.  This, in turn, induces an orientation on $\bdy X$ so that $X$ is
to the left of $\bdy X$.

Define $\calC(S)$ to be the set of isotopy classes of essential,
non-peripheral curves in $S$.  Define $\calA(S)$ to be the set of
proper isotopy classes of essential arcs in $S$.  Let $\AC(S) =
\calC(S) \cup \calA(S)$.  If $\alpha, \beta \in \AC(S)$ then the {\em
geometric intersection number}~\cite{FLP91} of $\alpha$ and $\beta$ is
\[
i(\alpha, \beta) = \min \big\{ | a \cap b | : a \in \alpha, b \in
\beta \big\}.  
\]
A finite subset $\Delta \subset \AC(S)$ is a {\em multicurve} if
$i(\alpha, \beta) = 0$ for all $\alpha, \beta \in \Delta$.  

If $T \subset S$ is a subsurface with $\bdy T$ a union of smooth arcs,
meeting perpendicularly at their endpoints, then define 
\[
\ind(T) = \chi(T) - \frac{c^+(T)}{4} + \frac{c^-(T)}{4}
\]
where $c^\pm(T)$ is the number of outward (inward) corners of $\bdy
T$.  Note that index is additive: $\ind(T \cup T') = \ind(T) +
\ind(T')$ as long as the interiors of $T, T'$ are disjoint.

\subsection{Train tracks} 
\label{Sec:Tracks}

For detailed discussion of train tracks see \cite{Thurston02,
PennerHarer92, Mosher03}.
A {\em pretrack} $\tau \subset S$ is a properly embedded graph in $S$
with additional structure.  The vertices of $\tau$ are called {\em
switches}; every switch $x$ is equipped with a tangent $v_x \in T_x^1
S$.  We require every switch to have valence three; higher valence is
dealt with in~\cite{PennerHarer92}.  The edges of $\tau$ are called
{\em branches}.  All branches are smoothly embedded in $S$.  All
branches incident to a fixed switch $x$ have derivative $\pm v_x$ at
$x$.  

An immersion $\rho \from \RR \to S$ is a {\em train-route} (or simply
a {\em route}) if
\begin{itemize}
\item
$\rho(\RR) \subset \tau$ and 
\item
$\rho(n)$ is a switch if and only if $n \in \ZZ$.
\end{itemize}
The restriction $\rho|[0,\infty)$ is a {\em half-route}.  If $\rho$
factors through $\RR/m\ZZ$ then $\rho$ is a {\em train-loop}.  We
require, for every branch $b$, a train-route travelling along $b$.

For each branch $b$ and point $p \in b$, a component $b'$ of $b
\setminus \{p\}$ is a {\em half-branch}.  Two half-branches $b', b''
\subset b$ are equivalent if $b' \cap b''$ is again a half-branch.
Every switch divides the three incident half-branches into a pair of
{\em small} half-branches on one side and a single {\em large}
half-branch on the other.  A branch $b$ is {\em large} ({\em small})
if both of its half-branches are large (small); if $b$ has one large
and one small half-branch then $b$ is called {\em mixed}.

Let $\calB = \calB(\tau)$ be the set of branches of $\tau$.  A
function $w \from \calB \to \RR_{\geq 0}$ is a {\em transverse
measure} on $\tau$ if $w$ satisfies the {\em switch conditions}: for
every switch $x \in \tau$ we have $w(a) + w(b) = w(c)$, where $a', b'$
are the small half-branches and $c'$ is the large half-branch meeting
$x$.  Let $P(\tau)$ be the projectivization of the cone of transverse
measures; define $V(\tau)$ to be the vertices of the polyhedron
$P(\tau)$.

\begin{figure}[htbp]
$$\begin{array}{c}
\includegraphics[width=10cm]{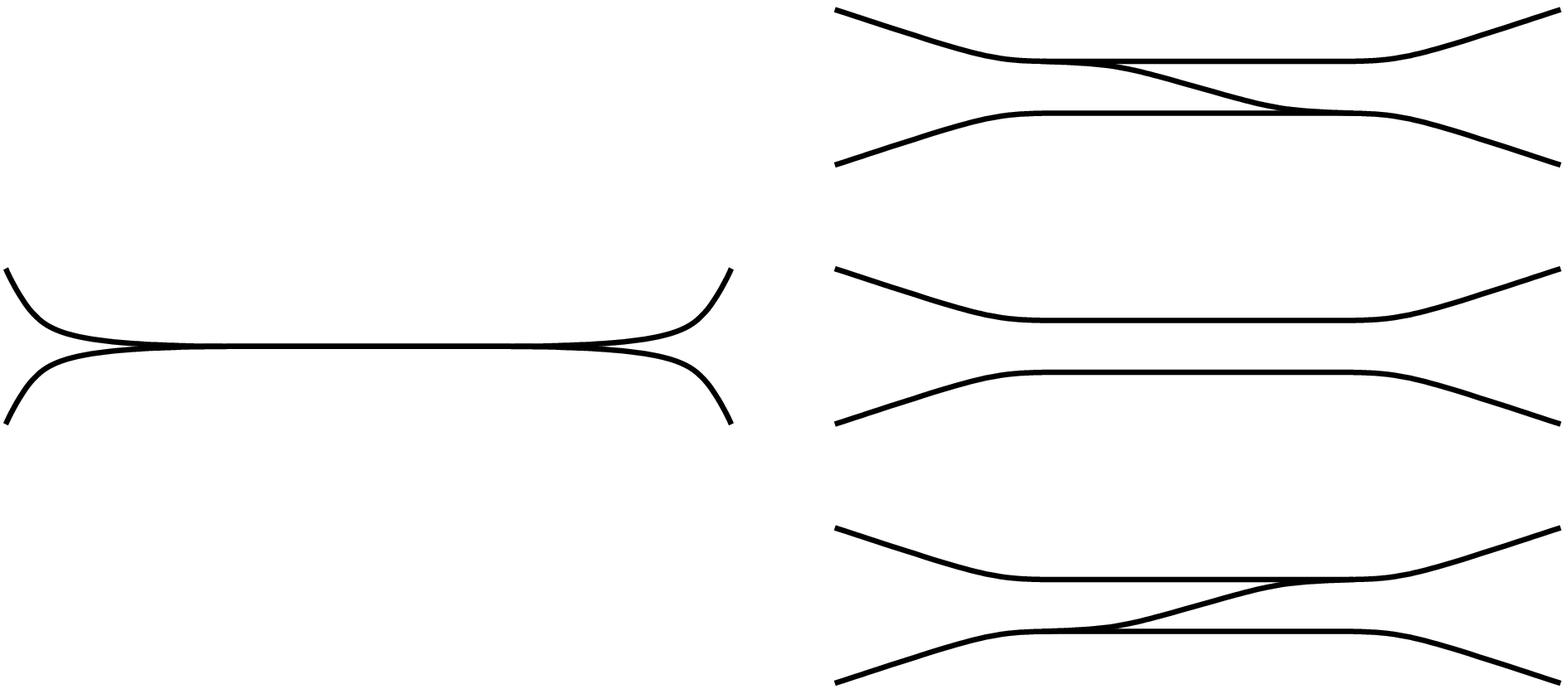} \\
\\
\\
\includegraphics[width=10cm]{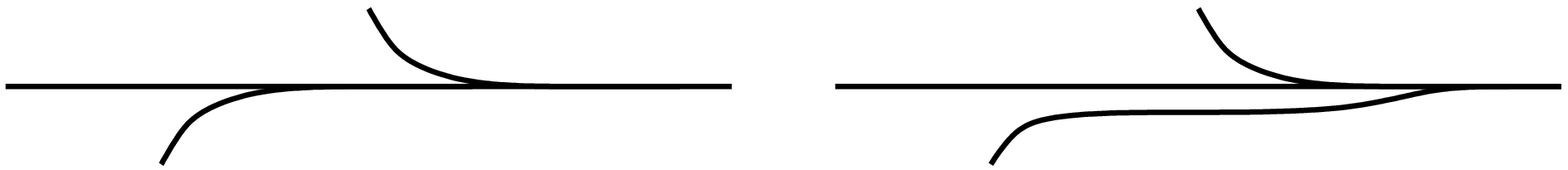}
\end{array}$$
\caption{Top: A large branch admits a right, central, and left
  splitting.  Bottom: a mixed branch admits a slide.}
\label{Fig:SplitSlide}
\end{figure}

We may {\em split} a pretrack along a large branch or {\em slide} it
along a mixed branch; see \reffig{SplitSlide}.  (Slides are called
{\em shifts} in~\cite{PennerHarer92}.)  
The inverse of a split or slide is called a {\em fold}.  Note that the
inverse of a slide may be obtained via a slide followed by an isotopy.

\begin{figure}[htbp]
$$\begin{array}{c}
\includegraphics[height=3.5cm]{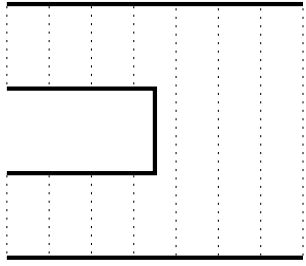}
\end{array}$$
\caption{The local model for $N(\tau)$ near a switch, with horizontal
  and vertical boundary in the correct orientation.  The dotted lines
  are ties.}
\label{Fig:Tie}
\end{figure}

Suppose that $\tau \subset S$ is a pretrack.  Let $N = N(\tau) \subset
S$ be a {\em tie neighborhood} of $\tau$: so $N$ is a union of
rectangles $\{ R_b \st b \in \calB \}$ foliated by vertical intervals
(the {\em ties}). At a switch, the upper and lower thirds of the
vertical side of the large rectangle are identified with the vertical
side of the small rectangles, as shown in \reffig{Tie}.  Since $N$ is
a union of rectangles it follows that $\ind(N) = 0$.  The {\em
horizontal boundary} $\bdy_h N$ is the union of $\bdy_h R_b$, for $b
\in \calB$, while the {\em vertical boundary} is $\bdy_v N =
\closure{\bdy N \setminus \bdy_h N}$.


Let $N = N(\tau)$ be a tie neighborhood.  Let $T$ be a {\em
complementary region} of $\tau$: a component of the closure of $S
\setminus N$.  Define the horizontal and vertical boundary of $T$ to
be $\bdy_h T = \bdy T \cap \bdy_h N$ and $\bdy_v T = \bdy T \cap
\bdy_v N$.  Note that all corners of $T$ are outward, so $\ind(T) =
\chi(T) - \frac{1}{4} |\bdy \bdy_h T|$.

Suppose $\tau \subset S$ is a pretrack.  The subsurface {\em filled}
by $\tau$ is the union of $N$ with all complementary regions $T$ of
$\tau$ that are disks or peripheral annuli.

\begin{definition}
\label{Def:Track}
Suppose that $\tau \subset S$ is a pretrack and $N = N(\tau)$.  We say
that $\tau$ is a {\em train track} if $\tau$ is compact, every
component of $\bdy N$ has at least one corner and every complementary
region $T$ of $\tau$ has negative index.
\end{definition}


\begin{definition}
\label{Def:SplitSeq}
In a {\em sliding and splitting sequence} $\{ \tau_i \}$ of train
tracks each $\tau_{i+1}$ is obtained from $\tau_i$ by a slide or a
split.
\end{definition}

\subsection{Carrying, duality, and efficient position}
\label{Sec:DefEffPos}

Suppose that $\tau \subset S$ is a train track. If $\sigma$ is also a
track, contained in $N = N(\tau)$ and transverse to the ties, then we
write $\sigma \carr \tau$ and say that $\sigma$ is {\em carried} by
$\tau$.  For example, if $\tau$ is a fold of $\sigma$ then $\sigma$ is
carried by $\tau$.

A properly embedded arc or curve $\beta \subset N$ is {\em carried} by
$\tau$ if $\beta$ is transverse to the ties and $\bdy \beta \cap
\bdy_h N = \emptyset$.  Thus if $\beta$ is carried then $\bdy \beta
\subset \bdy_v N$.  Again we write $\beta \carr \tau$ for carried arcs
and curves.

\begin{definition}
\label{Def:EffPos}
Suppose that $\alpha \subset S$ is a properly embedded arc or curve.
Then $\alpha$ is in {\em efficient position} with respect to $\tau$,
denoted $\alpha \eff \tau$, if
\begin{itemize}
\item
every component of $\alpha \cap N$ is a tie or is carried by $\tau$ and
\item
every region $T \subset S \setminus (N \cup \alpha)$ has negative
index or is a rectangle.
\end{itemize}
\end{definition}


Suppose that $\alpha \eff \tau$.  If $\alpha \subset N$ then $\alpha$
is carried, $\alpha \carr \tau$.  If no component of $\alpha \cap N$
is carried then $\alpha$ is {\em dual} to $\tau$ and we write $\alpha
\dual \tau$.  If $\Delta \subset \AC(S)$ is a multicurve then we write
$\Delta \carr \tau$, $\Delta \dual \tau$, or $\Delta \eff \tau$ if all
elements of $\Delta$ are disjointly and simultaneously carried, dual,
or in efficient position.


\begin{remark}
Our notion of duality is called {\em hitting efficiently} by Penner
and Harer~\cite[page 19]{PennerHarer92}. Note that $\alpha \dual \tau$
if and only if $\alpha$ is carried by some extension of the {\em dual
track} $\tau^*$, also defined in~\cite{PennerHarer92}.  Likewise, if
$\alpha \eff \tau$ and $\alpha \cap N$ consists of carried arcs then
$\alpha$ is carried by some extension of $\tau$.
\end{remark}

An index argument proves:

\begin{lemma}
\label{Lem:Essential}
If $\alpha$ is a properly embedded curve or arc in efficient position
with respect to a train track $\tau \subset S$ then $\alpha$ is
essential and non-peripheral in $S$.  \qed
\end{lemma}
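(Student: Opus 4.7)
The argument proceeds by contradiction via additivity of the index functional. Suppose $\alpha$ is trivial (as an arc or curve) or peripheral (as a curve). Then $\alpha$, possibly together with a subarc or subcircle of $\bdy S$, cobounds a region $R \subset S$ that is either a disk (trivial case) or a peripheral annulus (peripheral case). The strategy is to compute $\ind(R)$ in two ways: via the decomposition of $R$ induced by $N = N(\tau)$, and directly from the topology of $R$.

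Decompose $R$ as the union of (a) the components of $R \cap N$ and (b) the components of $R \setminus N$. Each type-(a) piece is a sub-rectangle of a branch rectangle of $N$, bounded by subarcs of $\bdy_h N$ and $\bdy_v N$, together with the ties and carried subarcs of $\alpha \cap N$, all meeting at right-angled outward corners; such a piece has index $0$. Each type-(b) piece is a component of $S \setminus (N \cup \alpha)$, hence by the second clause of the definition of efficient position is either a rectangle or has negative index, so its index is at most $0$. Additivity of index over this decomposition yields $\ind(R) \leq 0$. On the other hand, $\bdy R$ is piecewise smooth, with at most two right-angle outward corners (the endpoints of $\alpha$ in the arc case), giving $\ind(R) = 1$ when $R$ is a disk bounded by a curve, $\ind(R) = 1/2$ when $R$ is a half-disk bounded by an arc, and $\ind(R) = 0$ when $R$ is a peripheral annulus. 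The disk subcases immediately contradict $\ind(R) \leq 0$.

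For the peripheral annulus subcase, the equality $\ind(R) = 0$ forces every piece to have index exactly $0$, so every component of $R \setminus N$ is a rectangle. If $R$ is disjoint from $N$ then $R$ itself is a type-(b) piece which is an annulus rather than a rectangle, directly contradicting efficient position. Otherwise some complementary region $T$ of $\tau$ meets $R$, and one applies additivity again to $T$: decomposed by $\alpha \cap T$ it splits into rectangles inside $R$ and pieces outside $R$, and the train-track axiom $\ind(T) < 0$ must then be reconciled with the fact that the inside pieces contribute $0$. Carrying this out to a genuine contradiction is the step I expect to be the main technical obstacle, since it requires careful identification of how $\alpha$ cuts each complementary region that meets $R$, plus the corner counting required to certify that the remaining pieces outside $R$ cannot absorb the negative-index deficit of $T$ while still satisfying efficient position.
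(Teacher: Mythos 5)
Your approach — computing $\ind(R)$ in two ways and invoking additivity — is exactly the index argument the paper has in mind, and the disk and half-disk cases close as you describe. (One small correction en route: a component of $R \cap N$ need not lie inside a single branch rectangle $R_b$. But the finer decomposition $R \cap N = \bigcup_b (R \cap R_b)$ repairs this: since $\alpha$ is embedded, within a fixed $R_b$ the arcs of $\alpha \cap R_b$ are either all ties or all carried, and pairwise disjoint; in either case $R \cap R_b$ is a union of sub-rectangles of $R_b$ and has index $0$, so additivity still gives $\ind(R \cap N) = 0$.)

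The genuine gap is the peripheral-annulus case, and you rightly flag it. Your proposed route — fixing an arbitrary complementary region $T$ meeting $R$ and trying to reconcile $\ind(T) < 0$ with the pieces of $T$ inside $R$ being rectangles — cannot close: the pieces of $T$ lying outside $R$ are perfectly free to absorb all the negative index of $T$, and efficient position places no obstruction on that. The fix is to pick the right piece directly. Let $\delta \subset \bdy S$ be the boundary component cobounding $R$ with $\alpha$, and let $Q$ be the component of $S \setminus (N \cup \alpha)$ containing $\delta$. (As usual, take $\tau$, hence $N$, disjoint from $\bdy S$.) Since $Q$ is connected, contains $\delta$, and is disjoint from $\alpha$, we have $Q \subset R$. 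Now $\delta$ is an entire boundary circle of $Q$ and carries no corners, whereas a rectangle has exactly one boundary circle carrying four outward corners; hence $Q$ is not a rectangle, and efficient position forces $\ind(Q) < 0$. Feeding this strict inequality into additivity gives $\ind(R) \leq \ind(R \cap N) + \ind(Q) < 0$, while directly $\ind(R) = 0$ — the contradiction you wanted. Note this one argument subsumes your $R \cap N = \emptyset$ subcase (there $Q = R$), and you never need the intermediate step that every type-(b) piece is a rectangle.
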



\noindent
One of the goals of this paper is to prove the converse of
\reflem{Essential}; this is done in \refthm{EffPos}.  Following
\reflem{Essential} we may define $\calC(\tau) = \{ \alpha \st \alpha
\carr \tau \}$ and $\calC^*(\tau) = \{ \alpha \st \alpha \dual \tau
\}$.  Notice that if $\sigma \carr \tau$ is a track then
$\calC(\sigma) \subset \calC(\tau)$ and $\calC^*(\tau) \subset
\calC^*(\sigma)$.

A branch $b \in \calB(\tau)$ is {\em recurrent} if there is some
$\alpha \carr \tau$ that meets $R_b$.  The track $\tau$ is {\em
recurrent} if every branch is recurrent.  
{\em Transverse recurrence} is defined by replacing carrying by
duality~\cite[page 20]{PennerHarer92}.  
The track $\tau$ is {\em birecurrent} if $\tau$ is recurrent and
transversely recurrent~\cite[Section 1.3]{PennerHarer92}.  In a slight
departure from Penner and Harer's terminology~\cite[page
27]{PennerHarer92} we will call a birecurrent track $\tau$ {\em
complete} if all complementary regions have index $-1/2$.  (When $S =
S_{1,1}$ there is, instead, a single complementary region with index
$-1$.)


\begin{lemma}
\label{Lem:ManyDuals}
Suppose that $\sigma \subset S$ is a birecurrent track.  Then
$\calC^*(\sigma)$ has infinite diameter inside of $\calC(S)$.
\end{lemma}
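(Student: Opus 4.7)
The plan is to enlarge $\sigma$ to a complete birecurrent track, pass to the Penner-Harer dual track, and exhibit an unbounded sequence of carried curves converging projectively to a minimal filling measured lamination.

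First, I would apply Penner-Harer's completion procedure~\cite{PennerHarer92} to enlarge $\sigma$ to a complete birecurrent track $\tau \supset \sigma$, chosen with $N(\sigma) \subset N(\tau)$ so that ties of $N(\sigma)$ are sub-intervals of ties of $N(\tau)$. A routine check using \refdef{EffPos} shows that any $\alpha \dual \tau$ also satisfies $\alpha \dual \sigma$: the components of $\alpha \cap N(\sigma)$ are ties of $N(\sigma)$, and the index of complementary regions of $\alpha \cup N(\sigma)$ is controlled by that for $\alpha \cup N(\tau)$. This gives $\calC^*(\tau) \subseteq \calC^*(\sigma)$, reducing the problem to the case where $\sigma$ itself is complete.

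Next I would pass to the Penner-Harer dual track $\tau^*$, obtained by placing dual branches in each complementary triangle of $\tau$~\cite{PennerHarer92}; by construction a curve is carried by $\tau^*$ if and only if it is dual to $\tau$, so $\calC(\tau^*) = \calC^*(\tau)$. Completeness of $\tau$ makes $\tau^*$ a birecurrent filling track, so the polyhedron $P(\tau^*)$ has nonempty interior in $\ML(S)$, and a generic interior ray represents a minimal filling measured lamination.

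Finally I would choose simple closed curves $\alpha_n \in \calC(\tau^*)$ whose projective classes in $\PML(S)$ converge to some minimal filling $[\calL] \in P(\tau^*)$. Such curves exist because integer-weighted transverse measures on $\tau^*$ are dense in $P(\tau^*)$ and those with connected simple-loop support give simple closed curves carried by $\tau^*$. A direct consequence of Masur-Minsky hyperbolicity~\cite{MasurMinsky00} is that projective convergence of curves to a minimal filling measured lamination forces divergence in $\calC(S)$; hence $\{\alpha_n\} \subset \calC^*(\sigma)$ has infinite diameter. The main obstacle is the approximation step: arranging that the dense integer-weighted approximants can be chosen to be supported on single simple loops and that the limit lamination is genuinely minimal and filling. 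Both are standard once $\tau^*$ is known to be birecurrent and filling, the first by the usual passage from integer train-track measures to multicurves (with single-loop support controlled by connectedness of the supporting subtrack) and the second by genericity of minimal filling laminations in the interior of $P(\tau^*)$.
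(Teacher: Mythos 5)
Your proof is correct and follows essentially the same route as the paper: complete $\sigma$ to a complete track $\tau$ (preserving $\calC^*(\tau)\subset\calC^*(\sigma)$), produce a filling lamination dual to $\tau$ via the Penner--Harer dual track, and conclude infinite diameter from the fact that curves converging projectively to a filling lamination diverge in $\calC(S)$ --- the paper packages this last step as the Kobayashi/Luo argument cited from~\cite[p.~124]{MasurMinsky99} rather than invoking Masur--Minsky hyperbolicity, but it is the same idea. One small point: the worry about producing single simple closed curves rather than multicurves is unnecessary (if the multicurves $\mu_n$ leave every ball about a basepoint then so does any choice of components $\alpha_n\subset\mu_n$, since $\diam(\mu_n)\le 1$), and in any case the specific claim that connectedness of the supporting subtrack forces single-loop support is not correct --- a connected subtrack can still carry a multicurve with several components.
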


\begin{proof}
Let $\tau$ be a complete track extending $\sigma$~\cite[Corollary
  1.4.2]{PennerHarer92}.  Section 3.4 of~\cite{PennerHarer92} and a
dimension count gives a lamination $\lambda \dual \tau$ so that
$i(\lambda, \alpha) \neq 0$ for all $\alpha \in \calC(S)$.  Now an
argument of Kobayashi~\cite{Kobayashi88b}, refined by Luo~\cite[page
  124]{MasurMinsky99}, implies that $\calC^*(\tau) \subset
\calC^*(\sigma)$ has infinite diameter.
\end{proof}

\subsection{Vertex cycles}

When $\alpha \carr \tau$ is a curve there is a transverse measure
$w_\alpha$ defined by taking $w_\alpha(b) = |\alpha \cap t|$ where $t$
is any tie of the rectangle $R_b$.  Conversely, for any integral
transverse measure $w$ there is a multicurve $\alpha_w$ --- take
$w(b)$--many horizontal arcs in $R_b$ and glue endpoints as dictated
by the switch conditions.

Note that if $v \in V(\tau)$ then there is a minimal integral measure
$w$ projecting to $v$.  Since $v$ is an extreme point of $P(\tau)$
deduce that $\alpha_w$ is an embedded curve.  We call $\alpha_w$ a
{\em vertex cycle} of $\tau$ and henceforth use $V(\tau)$ to denote
the set of vertex cycles.

\subsection{Wide curves}

Let $N = N(\tau)$ be a tie neighborhood. 

\begin{definition}
\label{Def:Wide}
A multicurve $\Delta \eff \tau$ is {\em wide} if there is an
orientation of the components of $\Delta$ so that
\begin{itemize}
\item
for every $b \in \calB(\tau)$, all arcs of $\Delta \cap R_b$ are to
the right of each other (see the top of \reffig{Right}) and
\item
for every complementary region $T$ of $\tau$, all arcs of $\Delta \cap
T$ are to the right of each other (see the bottom of \reffig{Right}).
\end{itemize}
\end{definition}


\begin{figure}[htbp]
$$
\begin{array}{c} 
\includegraphics[height=2.2cm]{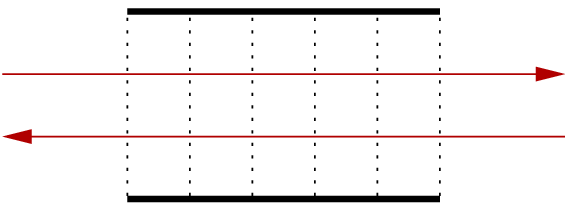} \\
\\
\includegraphics[height=3.5cm]{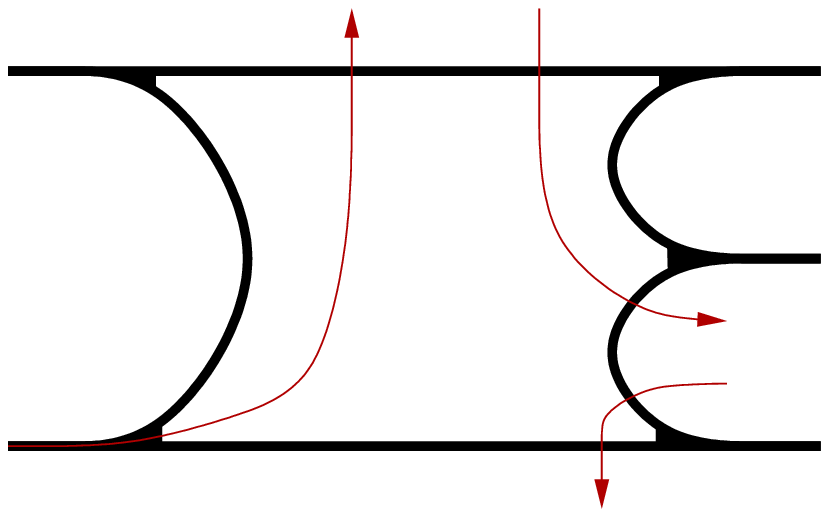}
\end{array}
$$
\caption{Above: Arcs of $\Delta \cap R_b$ to the right of each other.
  The vertical dotted line are ties; the heavy horizontal lines are
  arcs of $\bdy_h N$.  Below: Arcs meeting the complementary region
  $T$, all to the right of each other.}
\label{Fig:Right}
\end{figure}

It follows from the definition that if $\Delta \eff \tau$ is wide then
for any branch $b \in \calB(\tau)$ the intersection $\Delta \cap R_b$
has at most two components. 


\begin{lemma}
\label{Lem:VerticesWide}
Every vertex cycle $\alpha \in V(\tau)$ is wide.
\end{lemma}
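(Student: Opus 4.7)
The plan is to exploit the extremality of the vertex cycle measure. First, a vertex cycle $\alpha \in V(\tau)$ is by definition carried by $\tau$, so $\alpha \subset N = N(\tau)$. In particular $\alpha \cap T = \emptyset$ for every complementary region $T$, so the second condition in \refdef{Wide} is vacuously satisfied. The problem reduces to the first condition: showing that $\alpha$ admits an orientation under which, in every rectangle $R_b$, all arcs of $\alpha \cap R_b$ are consistently oriented (and hence to the right of each other).

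Fix an arbitrary orientation on $\alpha$ and suppose for contradiction that no orientation witnesses wideness; then some rectangle $R_b$ contains two arcs $a_1, a_2$ of $\alpha \cap R_b$ with opposite orientations. Let $w = w_\alpha$, which is a primitive integer transverse measure (its entries have gcd equal to $1$, by minimality) projecting to a vertex of $P(\tau)$. I aim to construct two nontrivial non-negative integer transverse measures $w_1, w_2$ on $\tau$ with $w = w_1 + w_2$. Since $w$ lies on an extreme ray of the cone of transverse measures, we must have $w_i = c_i w$ for real $c_i \geq 0$ with $c_1 + c_2 = 1$; primitivity of $w$ combined with integrality of the $w_i$ forces each $c_i \in \ZZ_{\geq 0}$, hence $\{c_1, c_2\} = \{0, 1\}$, contradicting nontriviality of both $w_i$.

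To build the decomposition I view $\alpha$ as an oriented train-loop. The hypothesis that $b$ is traversed in both directions lets me write $\alpha = \vec{b} \cdot W_1 \cdot \vec{b}^{-1} \cdot W_2$, where $W_1, W_2$ are closed sub-walks based at the two endpoints of $b$. The aim is to pair each $b$-passage at an endpoint with a matching reverse passage so that the cyclic train-loop $\alpha$ splits into two sub-train-loops whose measures are both nonzero and sum to $w$. The naive decomposition into ``positively'' and ``negatively'' oriented arcs fails to satisfy the switch conditions near large branches, where no consistent sign convention on branch directions exists; the construction must therefore use the specific structure of $W_1, W_2$ and the pairing of $b$-entries with $b$-exits at the two endpoints of $b$ to ensure switch compatibility globally.

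The main obstacle is precisely this combinatorial step: producing, from the bad arc configuration in $R_b$, two legitimate non-negative integer transverse measures that respect the switch condition at every switch, including those incident to purely large branches. Once this is in place, extremality of $w$ yields the contradiction and forces $\alpha$ to admit an orientation making all arcs in each rectangle parallel, hence to the right of each other, as required.
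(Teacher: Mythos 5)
Your overall strategy---reduce to showing $w_\alpha$ decomposes nontrivially as a sum of integral transverse measures, then invoke extremality and primitivity of a vertex---is exactly the paper's strategy. But your write-up stops short precisely where the work lies: you explicitly acknowledge in your final paragraph that you have not produced the decomposition $w = w_1 + w_2$, and you note that the naive splitting of the train-loop $\alpha$ into oriented pieces fails the switch conditions. That admission is a genuine gap, not a detail to be checked; the lemma is not proved.

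The paper closes this gap by a concrete surgery along subarcs of ties, and the case analysis is more delicate than your setup anticipates. There are three failure modes for wideness, not one. (i) Two consecutive intersections $x,y$ of $\alpha$ with a tie $t$ have the \emph{same} sign: surger along $[x,y] \subset t$ to produce carried curves $\beta,\gamma$ with $w_\alpha = w_\beta + w_\gamma$. (ii) Three consecutive intersections $x,y,z$ with alternating sign: surger along $[x,z]$, again giving $w_\alpha = w_\beta + w_\gamma$. (iii) The subtle case: $w_\alpha \leq 2$ everywhere and all adjacent tie-intersections alternate in sign, yet two rectangles $R_b,R_c$ impose \emph{incompatible} left/right arrangements. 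Your reduction---``if no orientation works, some rectangle has two oppositely-oriented arcs''---does not capture case (iii), where the obstruction is global rather than visible in a single rectangle. Moreover, in one subcase of (iii) the paper does \emph{not} produce $w_\alpha = w_\beta + w_\gamma$; instead it constructs $\beta,\gamma$ by doubling certain subarcs and obtains $2w_\alpha = w_\beta + w_\gamma$ with $w_\beta \neq w_\gamma$. This still shows $w_\alpha$ is not a vertex, but it lies outside the $w = w_1 + w_2$ framework you set up, so your extremality-plus-primitivity argument as stated would need to be adapted (the correct statement is that a vertex is an extreme point of the projectivized cone, which $2w_\alpha = w_\beta + w_\gamma$ with $w_\beta \not\propto w_\gamma$ does contradict). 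In short: right strategic instinct, but the surgery construction is the entire content of the proof and it is missing, and the case you focus on is not the hard one.
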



For an even more precise characterization of vertex cycles see Lemma
3.11.3 of~\cite{Mosher03}.
The proof below recalls a surgery technique used in the sequel.

\begin{proof}[Proof of \reflem{VerticesWide}]
We prove the contrapositive.  Suppose that $\alpha$ is not wide.
Orient $\alpha$.  There are three cases.

Suppose there is a branch $b \subset \tau$ and an oriented tie $t
\subset R_b$ where $x$ and $y$ are consecutive (along $t$) points of
$\alpha \cap t$ so that the signs of intersection at $x$ and $y$ are
equal.  Let $[x,y]$ be the subarc of $t$ bounded by $x$ and $y$.
Surger $\alpha$ along $[x,y]$ to form curves $\beta, \gamma \carr
\tau$.  See \reffig{Surger}.  Thus $w_\alpha = w_\beta + w_\gamma$ and
$\alpha$ is not a vertex cycle.

\begin{figure}[htbp]
\labellist
\small\hair 2pt
\pinlabel {$t$} [l] at 149.1 173.6
\pinlabel {$\alpha$} [l] at 291.6 145.5
\pinlabel {$\beta$} [l] at 291.6 127
\pinlabel {$\gamma$} [l] at 291.6 37.1
\endlabellist
$$\begin{array}{c}
\includegraphics[width=5.5cm]{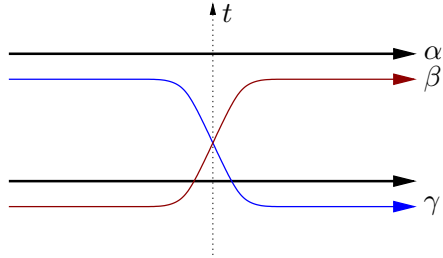} 
\end{array}$$
\caption{Surgery when adjacent intersections have the same sign.}
\label{Fig:Surger}
\end{figure}

Suppose instead that $x, y, z$ are consecutive (along $t$) points of
$\alpha \cap t$ with alternating sign.  In this case there is again a
surgery along $[x,z]$ producing curves $\beta$ and $\gamma$.  See
\reffig{Surger2} for one of the possible arrangements of $\alpha$,
$\beta$ and $\gamma$.  Again $w_\alpha = w_\beta + w_\gamma$ is a
non-trivial sum and $\alpha$ is not a vertex cycle.

\begin{figure}[htbp]
\labellist
\small\hair 2pt
\pinlabel {$t$} [l] at 149.1 264.0
\pinlabel {$\alpha$} [l] at 291.6 235.9
\pinlabel {$\beta$} [l] at 291.6 217.3
\pinlabel {$\gamma$} [l] at 291.6 163.6
\endlabellist
$$\begin{array}{c}
\includegraphics[width=5.5cm]{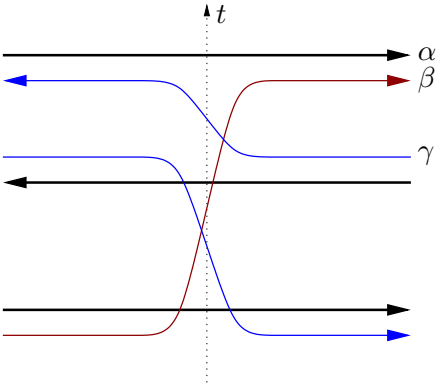} 
\end{array}$$
\caption{Surgery when the three intersections have alternating sign.}
\label{Fig:Surger2}
\end{figure}

In the remaining case $w_\alpha(a) \leq 2$ for all $a \in \calB$ and
there are branches $b,c \in \calB$ where the arcs of $\alpha \cap R_b$
are to the right of each other while the arcs of $\alpha \cap R_c$ are
to the left of each other.  See \reffig{Disagree} for the two ways
$\alpha$ may be carried by $\tau$.

\begin{figure}[htbp]
$$\begin{array}{c}
\includegraphics[height=2.5cm]{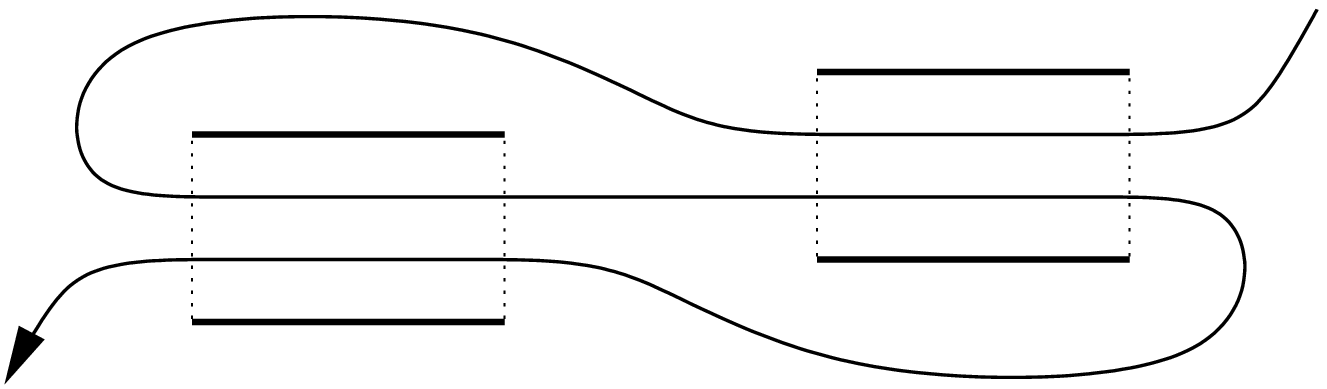} \\
\includegraphics[height=2.5cm]{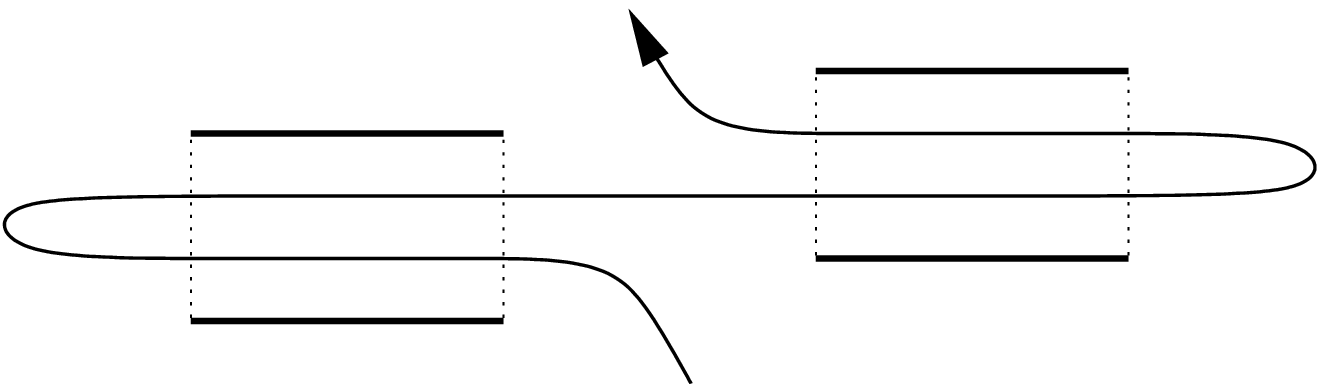} 
\end{array}$$
\caption{The curve $\alpha$ meets $R_b$ and $R_c$ twice.}
\label{Fig:Disagree}
\end{figure}

If $\alpha$ is carried as in the top line of \reffig{Disagree} then
surger $\alpha$ in both rectangles $R_b$ and $R_c$ as was done in
\reffig{Surger}.  This shows that $w_\alpha$ is a non-trivial sum and
so $\alpha$ is not a vertex cycle.  Now suppose that $\alpha$ is
carried as in the bottom line of \reffig{Disagree}.  Note that the
closure of $\alpha \setminus (R_b \cup R_c)$ is a union of four arcs.
Two of these, $\beta'$ and $\gamma'$, meet both $R_b$ and $R_c$.
Since $S$ is orientable no tie-preserving isotopy of $N$ throws
$\beta'$ onto $\gamma'$.
Let $\alpha' \cup \alpha'' = \alpha \setminus (\beta'
\cup \gamma')$.  Create an embedded curve $\beta$ by taking two
parallel copies of $\beta'$ and joining them to $\alpha'$ and
$\alpha''$.  Similarly create $\gamma$ by joining two parallel copies
of $\gamma'$ to the arcs $\alpha'$ and $\alpha''$.  It follows that
$w_\beta \neq w_\gamma$.  Since $2w_\alpha = w_\beta + w_\gamma$ again
$\alpha$ is not a vertex cycle.
\end{proof}

\subsection{Combing}

Suppose that $\alpha \carr \tau$ has $w_\alpha(b) \leq 1$ for every
branch $b \subset \tau$.  Orient and transversely orient $\alpha$ to
agree with the orientation of the surface $S$.  We think of the
orientation as pointing in the $x$--direction and the transverse
orientation pointing in the $y$--direction.  A half-branch $b \subset
\tau \setminus \alpha$, sharing a switch with $\alpha$, {\em twists to
the right} if any train-route through $b$ locally has positive slope.
Otherwise $b$ twists to the left.  If all branches on one side of
$\alpha$ twist to the right then that side of $\alpha$ has a {\em
right combing}, and similarly for a left combing.  See
\reffig{AnnSwap} for an example where both sides are combed to the
left.

\subsection{Curve complexes and subsurface projection}

For more information on the curve complex see~\cite{MasurMinsky99,
MasurMinsky00}.  Impose a simplicial structure on $\AC(S)$ where
$\Delta \subset \AC(S)$ is a simplex if and only if $\Delta$ is a
multicurve.  The complex of curves $\calC(S)$ and the arc complex
$\calA(S)$ are the subcomplexes spanned by curves and arcs,
respectively.  Note that if the complexity $\xi(S)$ is at least two
then $\calC(S)$ is connected~\cite[Proposition~2]{Harvey81}.  For
surfaces of lower complexity we alter the simplicial structure on
$\calC(S)$.  

Define the Farey tessellation $\calF$ to have vertex set $\QQ \cup
\{\infty\}$.  A collection of {\em slopes} $\Delta \subset \calF$
spans a simplex if $ps - rq = \pm 1$ for all $p/q, r/s \in \Delta$.
If $S = S_{1,1}$ or $S_{0,4}$ we take $\calC(S) = \calF$; that is,
there is an edge between curves that intersect in exactly one point
(for $S_{1,1}$) or two points (for $S_{0,4}$).  Note that for surfaces
with $\xi(S) \geq 1$ the inclusion of $\calC(S)$ into $\AC(S)$ is a
quasi-isometry.

Suppose now that $X \homeo S_{0,2}$ is an annulus.  Define $\calA(X)$
to be the set of all essential arcs in $X$, up to isotopy fixing $\bdy
X$ pointwise.  For $\alpha, \beta \in \calA(X)$ define
\[ 
i(\alpha, \beta) = \min \big\{ | (a \cap b) \setminus \bdy X | 
                                    : a \in \alpha, b \in \beta \big\}.
\]
As usual, multicurves give simplices for $\calA(X)$. 

If $\alpha, \beta$ are vertices of $\calC(S)$, $\calA(S)$, or $\AC(S)$
then define $d_S(\alpha, \beta)$ to be the minimal number of edges in
a path, in the one-skeleton, connecting $\alpha$ to $\beta$; the
containing complex will be clear from context.  Note that if $\alpha$
$\beta$ are distinct arcs of $\calA(X)$, when $X$ is an annulus, then
$d_X(\alpha, \beta) = 1 + i(\alpha,
\beta)$~\cite[Equation~2.3]{MasurMinsky00}.

As usual, suppose that $\xi(S) \geq 1$.  Fix an essential subsurface
$X \subset S$ with $\xi(X) < \xi(S)$.  We suppose that $X$ is either a
non-peripheral annulus or a surface of complexity at least one.  (The
case of an essential annulus inside of $S_1$ is not relevant here.)
Following~\cite{MasurMinsky00}, we will define the {\em subsurface
projection} relation $\pi_X \from \AC(S) \to \AC(X)$.  Let $S^X$ be
the cover of $S$ corresponding to the inclusion $\pi_1(X) < \pi_1(S)$.
The surface $S^X$ is not compact; however, there is a canonical (up to
isotopy) homeomorphism between $X$ and the Gromov compactification of
$S^X$.  This identifies the arc and curve complexes of $X$ and $S^X$.
Fix $\alpha \in \AC(S)$.  Let $\alpha^X$ be the preimage of $\alpha$
in $S^X$.

If $\alpha^X$ contains a non-peripheral curve in $S^X$ then
$\pi_X(\alpha) = \{ \alpha \}$.  Otherwise, place every essential arc
of $\alpha^X$ into the set $\pi_X(\alpha)$.  If neither obtains then
$\pi_X(\alpha) = \emptyset$.  If $\pi_X(\alpha) = \emptyset$ then we
say that $\alpha$ {\em misses} $X$.  If $\pi_X(\alpha) \neq \emptyset$
then $\alpha$ {\em cuts} $X$.



Suppose that $\alpha, \beta \in \calC(S)$.  If $\pi_X(\alpha)$ and
$\pi_X(\beta)$ are nonempty define
\[
d_X(\alpha, \beta) = \diam_X \big( \pi_X(\alpha) \cup \pi_X(\beta)
\big).
\]
Likewise define the distance $d_X(A, B)$ between finite sets $A, B
\subset \calC(S)$.  When $\tau$ is a track, we use the shorthand
$\pi_X(\tau)$ for the set $\pi_X(V(\tau))$.  If $\sigma$ is also a
track, we write $d_X(\tau, \sigma)$ for the distance $d_X(\pi_X(\tau),
\pi_X(\sigma))$.

We end with a lemma connecting the subsurface projection of carried
(or dual) curves to the behavior of wide curves. 

\begin{lemma}
\label{Lem:Decompose}
Suppose that $X \subset S$ is an essential surface and $\tau$ is a
track.  If some $\alpha \carr \tau$ ($\alpha \dual \tau$) cuts $X$
then there is a vertex cycle $\beta \carr \tau$ (wide dual $\beta
\dual \tau$) cutting $X$. 
\end{lemma}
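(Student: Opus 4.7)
The plan is to treat the carried and dual cases in parallel by decomposing $\alpha$ into simpler pieces and applying the additivity of geometric intersection number against a single witness curve $\delta \subset \overline X$. I would first choose $\delta$ so that $i(\alpha, \delta) > 0$ and so that any $\gamma$ with $i(\gamma, \delta) > 0$ cuts $X$. If $\alpha$ essentially meets $\partial X$, take $\delta$ to be the offending component; otherwise $\alpha$ is isotopic into $X$ as an essential non-peripheral curve (which cannot occur when $X$ is an annulus, because cutting then forces $i(\alpha, \partial X) > 0$), and I pick $\delta \in \calC(X)$ at distance at least two from $\alpha$, yielding $i(\alpha, \delta) > 0$. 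In either case a curve $\gamma$ with $i(\gamma, \delta) > 0$ either crosses $\partial X$ essentially or lies non-peripherally in $X$, and so cuts $X$.

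For the carried case I would use that $V(\tau)$ spans the extreme rays of the cone of transverse measures on $\tau$, so $w_\alpha = \sum_i c_i\, w_{v_i}$ for some $c_i \geq 0$ and $v_i \in V(\tau)$. Since intersection number with a fixed simple curve is linear in the transverse measure,
\[
0 \;<\; i(\alpha, \delta) \;=\; \sum_i c_i \, i(v_i, \delta),
\]
so some $v_i$ satisfies $i(v_i, \delta) > 0$ and is the desired vertex cycle cutting $X$.

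For the dual case I would induct on a complexity measuring the failure of wideness, for instance the total number of ordered pairs of arcs of $\alpha$ inside a common rectangle or complementary region that are not to the right of each other. If $\alpha$ is already wide the lemma is immediate. Otherwise wideness fails in some rectangle $R_b$ or complementary region $T$, and I surger $\alpha$ there, adapting the three surgery cases in the proof of \reflem{VerticesWide} to consecutive ties of $R_b$ or consecutive arcs in $T$. The surgery is local, preserves duality and efficient position, and replaces $\alpha$ by a pair of dual multicurves $\beta, \gamma$ of strictly smaller complexity. Arranging $\delta$ disjoint from the small surgery disk yields $i(\alpha, \delta) = i(\beta, \delta) + i(\gamma, \delta)$, so some component of $\beta$ or $\gamma$ still cuts $X$; apply induction to that component, noting that wideness is inherited by components of a wide multicurve.

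The main obstacle I foresee is this dual surgery step: correctly adapting each of the three surgery cases of \reflem{VerticesWide}, choosing a complexity that strictly drops in all of them, and checking that the surgered pieces remain dual and in efficient position so the induction stays within the admissible class. By comparison, the carried case reduces immediately to cone-of-measures linearity.
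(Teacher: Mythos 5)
Your carried-case argument rests on a linearity claim that is false. You write $i(\alpha,\delta) = \sum_i c_i\, i(v_i,\delta)$, but the geometric intersection number with a fixed curve $\delta$ is \emph{not} a linear function of the transverse measure on $\tau$ unless $\delta$ happens to be dual to $\tau$. For a concrete counterexample, take the once-punctured torus with the standard track whose vertex cycles are slopes $0$ and $\infty$, let $\alpha$ have slope $2/1$ so that $w_\alpha = 2w_\infty + w_0$, and let $\delta$ have slope $1/1$ (which is carried, not dual): then $i(\alpha,\delta) = 1$ while $2\,i(\infty,\delta) + i(0,\delta) = 3$. Your $\delta$ is chosen to be a boundary component of $X$ or a curve in $\calC(X)$ far from $\alpha$, and there is no reason for such a $\delta$ to be dual to $\tau$. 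Even the weaker subadditive inequality $i(\alpha,\delta) \leq \sum_i c_i\, i(v_i,\delta)$, which would suffice for your conclusion, is not immediate, because the distinct vertex cycles $v_i$ may cross each other and cannot all be put in minimal position with $\delta$ simultaneously by isotoping $\delta$ alone. The paper avoids this entirely: it observes that $\alpha$ lives in a small neighborhood of the union $B = \cup \beta_i$ after putting the $\beta_i$ in pairwise minimal position, and then, in the contrapositive, isotopes $X$ itself off of $B$; no intersection-number computation against a witness curve is involved.

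The same kind of issue infects your dual-case step. You assert $i(\alpha,\delta) = i(\beta,\delta) + i(\gamma,\delta)$ after a surgery performed away from $\delta$, but that identity holds only at the level of intersection \emph{points}, not geometric intersection numbers; converting it into an identity of $i$-values requires that $\delta$ be simultaneously in minimal position with $\alpha$, $\beta$, and $\gamma$, which you have not arranged (something like \refcor{EfficientIntersection} would be needed). Separately, you would still need to check that each surgery produces pieces that remain dual (not carried or peripheral) and in efficient position, and that a single complexity drops across all three surgery configurations, including surgeries inside complementary regions -- you flag this yourself, but it is a real obligation, not a routine verification. The surgery-by-wideness idea is the right spirit for producing a wide dual (it mirrors \reflem{VerticesWide} and \reflem{Surgery}), but the cutting-$X$ conclusion should again be drawn topologically, as in the paper's contrapositive, rather than via an intersection-number ledger with $\delta$.
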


\begin{proof}
Some multiple of $\alpha \carr \tau$ is a sum of vertices: $m \cdot
w_\alpha = \sum n_i w_i$ where $w_i$ is the integral transverse
measure for the vertex $\beta_i \in V(\tau)$.  Via a sequence of
tie-preserving isotopies of $N$ we may arrange for all of the
$\beta_i$ to realize their geometric intersection with each
other. Note that there is an isotopy representative of $\alpha$
contained inside of a small neighborhood of the union $B = \cup
\beta_i$.

To prove the contrapositive, suppose that none of the $\beta_i$ cut
$X$.  It follows that $X$ may be isotoped in $S$ to be disjoint from
$B$.  Thus $\alpha$ misses $X$, as desired.  A similar
discussion applies when $\alpha \dual \tau$. 
\end{proof}

\section{Induced tracks}
\label{Sec:Induced}

Suppose that $\tau \subset S$ is a train track.  Suppose that $X
\subset S$ is an essential subsurface with $\xi(X) < \xi(S)$.  Let
$S^X$ be the corresponding cover of $S$.  Let $\tau^X$ be the preimage
of $\tau$ in $S^X$; note that the pretrack $\tau^X$ satisfies all of
the axioms of a train track except compactness.

Define $\AC(\tau^X)$ to be the set of essential arcs and essential,
non-peripheral curves properly embedded in the Gromov compactification
of $S^X$ with interior a train-route or train-loop carried by
$\tau^X$.  A bit of caution is required here --- inessential arcs and
peripheral curves may be carried by $\tau^X$ but these are not
admitted into $\AC(\tau^X)$.  Define $\calA(\tau^X), \calC(\tau^X)
\subset \AC(\tau^X)$ to be the subsets of arcs and curves
respectively.  Define $\AC^*(\tau^X)$ to be the set of dual essential
arcs and dual essential, non-peripheral curves, up to isotopies fixing
$\tau^X$ setwise. 

\subsection{Induced tracks for non-annuli}
\label{Sec:InducedForNonAnnuli}

If $X$ is not an annulus define $\tau|X$, the {\em induced track}, to
be the union of the branches of $\tau^X$ crossed by an element of
$\calC(\tau^X)$.  

\begin{lemma}
\label{Lem:Compact}
If $X$ is not an annulus then the induced track $\tau|X$ is compact.
\end{lemma}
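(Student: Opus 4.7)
The plan is to topologically identify $S^X$ with $\interior(X)$ via the canonical homeomorphism between $X$ and the Gromov compactification, then bound how deeply a carried essential non-peripheral loop can reach into the ends of $S^X$.

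First, since $X$ is not an annulus, $S^X \homeo \interior(X)$ is a non-compact surface with finitely many ends, one per component of $\bdy X$, each with an open annular neighborhood. Fix a compact core $Y \subset S^X$ homeomorphic to $X$ so that $S^X \setminus Y$ is a disjoint union of such neighborhoods $\{ U_e \}$. Being the preimage under a covering of the finite graph $\tau$, the pretrack $\tau^X$ is properly embedded in $S^X$, so any compact subset of $S^X$ meets only finitely many of its branches.

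Next, I bound the depth of the intersection with each end. Let $\alpha \in \calC(\tau^X)$ and let $\beta$ be a connected component of $\alpha \cap U_e$. Since $\alpha$ is embedded and $U_e$ is an open annulus whose only boundary circle is $\bdy U_e$, the arc $\beta$ is embedded with both endpoints on $\bdy U_e$ and so cobounds an embedded disk $D \subset U_e$ with a subarc of $\bdy U_e$. Any complementary region of $\tau^X$ fully contained in $D$ is a disk (as $D$ is simply connected) covering a complementary polygon of $\tau$, so each such region contributes at most $-1/4$ to the index. Since rectangles of $N(\tau^X)$ contribute zero, and the total $\ind(D) = \chi(D) - c^+(D)/4 + c^-(D)/4$ is bounded above by a quantity depending only on the boundary combinatorics of $\bdy D$, additivity of index bounds the number of complementary regions of $\tau^X$ fully inside $D$, and hence bounds the combinatorial length of $\beta$. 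It follows that the branches of $\tau^X$ crossed by any $\alpha \in \calC(\tau^X)$ lie in a fixed compact subsurface $Y' \supset Y$; by properness, $\tau^X \cap Y'$ has only finitely many branches, so $\tau|X \subset \tau^X \cap Y'$ is compact.

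The main obstacle will be the careful bookkeeping for the index computation. The corners of $\bdy D$ arise both from switches of $\tau^X$ along $\beta$ and from the two endpoints where $\beta$ meets $\bdy U_e$, and complementary regions of $\tau^X$ may be sliced by $\bdy U_e$ into partial pieces whose indices are not necessarily negative. An appropriate choice of the compact core $Y$ (placing each $\bdy U_e$ in efficient position with respect to $\tau^X$, using the machinery developed later in the paper) controls these partial pieces and corner counts, yielding the required uniform bound.
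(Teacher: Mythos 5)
The paper proves this with a metric argument, not an index argument. It cites Mosher's Proposition~3.3.3: train-routes in $\tau^X$ that are properly mapped to $S^X$ are uniform quasi-geodesics. This gives a compact core $X' \subset S^X$ such that any route meeting $S^X \setminus X'$ has an endpoint on the Gromov boundary; since a train-loop is closed and hence cannot reach the boundary at infinity, every element of $\calC(\tau^X)$ lies in $X'$, so $\tau|X \subset X'$ is compact. Your approach replaces this with a purely combinatorial index count inside the disk $D$ cut off by a component $\beta$ of $\alpha \cap U_e$, which is a genuinely different strategy.

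There is, however, a real gap in the crucial step: bounding the number of complementary regions of $\tau^X$ fully contained in $D$ does \emph{not} bound the combinatorial length of $\beta$. A carried arc can run deep into the end and return with $D$ a long thin strip lying entirely inside $N(\tau^X)$ --- a union of pieces of rectangles only, trapping no complementary regions at all. Since full rectangle pieces contribute index zero, such a $D$ satisfies the index identity with arbitrarily many branches traversed, so additivity of index alone gives no depth bound. What actually forbids this thin-strip scenario is the quasi-geodesic property of train-routes (once a route leaves the compact core it must escape to infinity and cannot turn back), and this geometric input is exactly what your argument does not supply. Your final paragraph flags the bookkeeping for partial pieces along $\bdy U_e$, but that is the easier issue; the harder one, identified above, is unaddressed. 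Separately, invoking efficient position of $\bdy U_e$ would lean on machinery the paper develops only in Section~4, for birecurrent tracks in compact surfaces, so it is at best awkward to use at this point and does not obviously apply to the non-compact $\tau^X$.
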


\begin{proof}
Note train-routes in $\tau^X$ that are mapped properly to $S^X$ are
uniform quasi-geodesics in $S^X$~\cite[Proposition 3.3.3]{Mosher03}.
Thus there is a compact core $X' \subset S^X$, homeomorphic to $X$, so
that any route meeting $S^X \setminus X'$ has one endpoint on the
Gromov boundary of $S^X$.  It follows that $\tau|X \subset X'$.
\end{proof}


Note that $\tau|X$ may not be a train track: $N = N(\tau|X)$ may have
smooth boundary components and complementary regions with non-negative
index.
However, since all complementary regions of $\tau^X$ have negative
index it follows that if a complementary region $T$ of $\tau|X$ has
non-negative index then $T$ is a peripheral annulus meeting a smooth
component of $\bdy N$.

The definition of $\tau|X$ implies that $\tau|X$ is recurrent.
Carrying, duality, efficient position and wideness with respect to an
induced track are defined as in \refsec{DefEffPos}.  Define
$\calC(\tau|X) \subset \calC(X)$, the subset of curves carried by
$\tau|X$.  Note that $\calC(\tau|X) = \calC(\tau^X)$.  Define
$\AC^*(\tau|X) \subset \AC(X)$ to be the subset of arcs and curves
dual to $\tau|X$.  Note that $\AC^*(\tau|X) \supset \AC^*(\tau^X)$.

Now, $\tau|X$ fails to be transversely recurrent exactly when it
carries a peripheral curve.  We say that a branch $b \subset \tau|X$
is {\em transversely recurrent with respect to arcs and curves} if
there is $\alpha \in \AC^*(\tau|X)$ meeting $b$.  Then $\tau|X$ is
{\em transversely recurrent with respect to arcs and curves} if every
branch $b$ is.



\begin{lemma}
\label{Lem:TransverseRecurrenceAC}
Suppose that $\tau$ is transversely recurrent in $S$.  Then $\tau|X$
is transversely recurrent with respect to arcs and curves in $X$.
Furthermore: suppose that $\tau|X$ is transversely recurrent with
respect to arcs and curves in $X$.  If $\sigma \subset \tau|X$ is a
train track then $\sigma$ is transversely recurrent in $X$. 
\end{lemma}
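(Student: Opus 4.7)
The plan is to prove the two claims of \reflem{TransverseRecurrenceAC} separately, using the cover $S^X$ for the first and a doubling construction at $\bdy X$ for the second.

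For the first claim, fix a branch $b$ of $\tau|X$. After straightening valence-two switches, $b$ is a concatenation of branches of $\tau^X$; pick one such sub-branch and let $\bar b$ denote its image in $\tau$. By transverse recurrence of $\tau$ there is a closed curve $\alpha \in \calC^*(\tau)$ meeting $R_{\bar b}$. Take the component $\alpha'$ of the preimage $\alpha^X \subset S^X$ that crosses $R_b$; in the Gromov compactification $X$ of $S^X$ it is either a closed curve or a properly embedded arc, and it inherits duality from $\alpha$, so $\alpha' \dual \tau^X$ and hence $\alpha' \dual \tau|X$.

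The crucial step is to check that $\alpha'$ is an essential arc, or an essential and non-peripheral curve, in $X$. By the definition of $\tau|X$ there is some $\gamma \in \calC(\tau^X)$ crossing $b$, and $\gamma$ represents an essential, non-peripheral element of $\calC(X)$. Since a tie of $R_b$ contained in $\alpha'$ crosses $\gamma$ transversely, the geometric intersection $i(\alpha', \gamma)$ is at least one in $X$. But an inessential arc, or a null-homotopic or peripheral curve, would have zero geometric intersection with any essential, non-peripheral curve. Hence $\alpha' \in \AC^*(\tau|X)$ meets $b$.

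For the second claim, fix a branch $b$ of $\sigma$. By hypothesis some $\alpha \in \AC^*(\tau|X)$ meets $b$. The tie neighborhood $N(\sigma)$ is obtained from $N(\tau|X)$ by absorbing the rectangles of branches in $\tau|X \setminus \sigma$ into the complementary regions of $\sigma$; duality is preserved after extending the ties accordingly, so $\alpha \dual \sigma$. If $\alpha$ is already a closed curve we are done. Otherwise $\alpha$ is an essential arc with endpoints on $\bdy X$. By \reflem{Compact}, $\sigma$ is compactly supported in the interior of $X$, so some collar of $\bdy X$ is disjoint from $\sigma$. Take a parallel copy $\alpha''$ of $\alpha$ in $X$ and join $\alpha$ to $\alpha''$ by short arcs in this collar, producing a closed curve $\gamma$ meeting $\sigma$ exactly twice as often as $\alpha$. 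Every component of $\gamma \cap N(\sigma)$ is still a tie and the joining arcs create no bigons with $\sigma$, so $\gamma \dual \sigma$. Because $\gamma$ hits the branch $b$ of $\sigma$ and $\sigma$ carries some essential non-peripheral curve, $\gamma$ is essential and non-peripheral in $X$, so $\gamma \in \calC^*(\sigma)$.

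The main obstacle is the essentiality argument in the first claim: the carried curve $\gamma$ built into the definition of $\tau|X$ is exactly what rules out a degenerate lift of $\alpha$. The second claim is then mostly a doubling construction, relying on \reflem{Compact} to ensure $\bdy X$ can be used without interference from $\sigma$.
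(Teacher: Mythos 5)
Your first‐claim argument is correct and fleshes out what the paper compresses into ``follows from the definitions'': lifting a dual curve of $\tau$ to the component of $\alpha^X$ that crosses the given lifted rectangle, and using intersection with a curve $\gamma \in \calC(\tau^X)$ to certify that the lift is essential (a dual curve and a carried curve realize their geometric intersection, by an index argument ruling out bigons). That is fine, and it is consistent with the paper's definitions including the noted inclusion $\AC^*(\tau|X) \supset \AC^*(\tau^X)$.

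The second‐claim argument, however, has a genuine gap. If you join $\alpha$ to its parallel copy $\alpha''$ by ``short arcs in this collar'' you get a curve $\gamma$ that bounds the thin band $B$ between $\alpha$ and $\alpha''$, i.e.\ a null-homotopic curve. The ``cap'' regions of $B \setminus N(\sigma)$ adjacent to the joining arcs are disks with exactly two (outward) corners where $\gamma$ meets $\bdy_h N(\sigma)$, hence have index $1 - 2/4 = 1/2 > 0$. So $\gamma$ is \emph{not} in efficient position with respect to $\sigma$, and the assertion ``$\gamma \dual \sigma$'' fails; ``the joining arcs create no bigons'' is not the same as verifying the index condition in \refdef{EffPos}. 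If instead the joining arcs wrap around a component of $\bdy X$ (the usual ``double along $\bdy X$'' construction), $\gamma$ can still be peripheral --- in $S_{0,4}$, doubling an arc from $\beta_1$ to $\beta_1$ produces one curve isotopic to another boundary component. Your final sentence (``Because $\gamma$ hits the branch $b$ of $\sigma$ and $\sigma$ carries some essential non-peripheral curve, $\gamma$ is essential and non-peripheral in $X$'') is a non sequitur: a null-homotopic or peripheral curve can certainly intersect a train track. The correct route, once one has a curve genuinely in efficient position with respect to $\sigma$, is to invoke \reflem{Essential}; but the construction as stated does not deliver efficient position. The paper's phrase ``an index argument proves the second claim'' is precisely pointing at the care needed here --- one must choose the doubled curve so that the resulting complementary regions have non-positive index, and verify that the curve is not cut off by a peripheral component of $\bdy U$, neither of which your write-up addresses.
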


\begin{proof}
The first claim follows from the definitions.  An index argument
proves the second claim.
\end{proof}





Here is our second surgery argument.

\begin{lemma}
\label{Lem:Surgery}
Suppose that $\tau$ is a track and $X \subset S$ is an essential
subsurface, yet not an annulus.  For every $\alpha \in \calA(\tau^X)$
at least one of the following holds:
\begin{itemize}
\item
There is an arc $\beta \in \calA(\tau^X)$ so that $\beta$ is wide and
$i(\alpha, \beta) = 0$.
\item
There is a curve $\gamma \in \calC(\tau|X)$ so that $i(\alpha, \gamma)
\leq 2$.
\end{itemize}
The statement also holds replacing $\calA, \calC$ by $\calA^*,
\calC^*$. 
\end{lemma}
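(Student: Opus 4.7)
The plan is to adapt the surgery argument of \reflem{VerticesWide} from closed curves to arcs. Given $\alpha \in \calA(\tau^X)$, if $\alpha$ is wide then setting $\beta = \alpha$ satisfies the first alternative. So suppose $\alpha$ is not wide; then, exactly as in the proof of \reflem{VerticesWide}, there is a tie $t$ in some rectangle $R_b$ of $\tau^X$ exhibiting either two consecutive same-sign intersections of $\alpha$ with $t$, three consecutive alternating-sign intersections, or an incompatibility with another rectangle $R_c$. I would surger $\alpha$ along a subinterval of $t$ (and along a subinterval of a tie of $R_c$ in the last configuration).

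Because $\alpha$ is an arc rather than a closed curve, each such surgery produces one of two outputs, with $w_\alpha$ decomposing as a sum of nonzero measures on $\tau^X$: either two carried arcs $\beta_1, \beta_2$, or one carried arc $\beta'$ together with a train-loop $\gamma \carr \tau^X$. In the first case a standard index argument (as in \reflem{Essential}) shows that at least one $\beta_i$ again lies in $\calA(\tau^X)$, and $|w_{\beta_i}| < |w_\alpha|$ strictly. I would induct on $|w_\alpha| = \sum_b w_\alpha(b)$, applying the lemma to this essential $\beta_i$ to obtain either a wide arc disjoint from $\beta_i$ or a carried curve meeting $\beta_i$ at most twice. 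Since the surgery is supported in a small neighborhood of the surgery tie, a further tiny isotopy keeps the inductively produced object disjoint from (or with the same bounded intersection against) the original $\alpha$.

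In the second case, $\gamma$ is topologically the concatenation of a sub-arc $\alpha|_{[s,u]}$ with the surgery tie segment $[x,y]$. If $\gamma$ is essential and non-peripheral in $X$, then $\gamma \in \calC(\tau|X)$, and I would verify $i(\alpha, \gamma) \leq 2$ by pushing $\gamma$ transversely to one fixed side of $\alpha$: the shared sub-arc is then moved entirely off $\alpha$, while the pushed tie segment meets $\alpha$ in at most two points, each lying in a small neighborhood of $x$ or of $y$. If instead $\gamma$ is inessential or peripheral, the disk or boundary-parallel annulus that $\gamma$ bounds provides an isotopy removing the bad tie configuration, strictly reducing $|w_\alpha|$ and again allowing induction.

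The dual statement is proved by the identical argument with the roles of ties and train-routes exchanged: one surgers a dual arc along a sub-route of a carried train-route of $\tau^X$, and the index, orientation, and push-off arguments dualize by the horizontal/vertical symmetry built into the definition of efficient position. The main obstacle I expect is the bookkeeping in the arc--loop case, namely pinning down why the push-off introduces at most two (not merely finitely many) residual intersections of $\gamma$ with $\alpha$; secondarily, in the two-arc case, verifying that the inductive output can indeed be pushed disjoint from the original $\alpha$ rather than only from the intermediate surgered arc on which induction was invoked.
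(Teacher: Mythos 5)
Your outline correctly identifies the overall strategy (adapt the surgery scheme of \reflem{VerticesWide} to arcs), and the handling of the peripheral-$\gamma$ subcase (Dehn twist or half-twist, then recurse) matches the paper. However there are three genuine gaps.

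First, your induction variable $|w_\alpha| = \sum_b w_\alpha(b)$ is not finite: an element $\alpha \in \calA(\tau^X)$ is a properly embedded arc in the (non-compact) cover $S^X$, and it generally crosses infinitely many branches of $\tau^X$. The paper instead orients $\alpha$ so that it is wide outside a compact core (using the quasi-geodesic property of routes, \cite[Proposition 3.3.3]{Mosher03}) and inducts on the finite count of arcs of intersection of $\alpha$ with the rectangles $R_b$ meeting that core. Without a comparable truncation your recursion has no a priori reason to terminate.

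Second, the claimed ``two carried arcs $\beta_1, \beta_2$'' output cannot occur: surgering a single arc along a tie-chord always produces exactly one arc and one closed train-loop. So your first branch of the case analysis is vacuous, and the need to show ``at least one $\beta_i$ is essential'' never arises.

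Third, and most importantly, your sketch never produces the wide arc $\beta$ promised by the first alternative. In the remaining case --- where $\alpha$ has no same-sign or alternating-sign tie crossings but is still not wide because it meets some rectangles twice with incompatible handedness --- the curve argument of \reflem{VerticesWide} (surger in two rectangles at once, or exploit orientability) does not transfer, because $\alpha$ is an arc rather than a closed curve. The paper handles this with a separate construction: it selects, among the ``chords'' $t_b$ (tie segments with $\alpha \cap t_b = \bdy t_b$), a first innermost one, then builds $\beta$ from two parallel copies of the initial segment of $\alpha$ joined to the short subarc $\alpha_b$; it is precisely this construction that produces a \emph{wide} arc with $i(\alpha,\beta) = 0$, and essentiality is checked by an index argument (a bounding disk would have index $1/2$). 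Your proposal omits this entirely, so the first alternative is never realized.
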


\begin{proof}
The proof is modelled on that of \reflem{VerticesWide}.  If $\alpha
\carr \tau^X$ is wide we are done.  If not, as $\alpha$ is a
quasi-geodesic~\cite[Proposition 3.3.3]{Mosher03}, 
orient $\alpha$ so that $\alpha$ is wide outside of a compact core for
$S^X$.  Now we induct on the total number of arcs of intersection
between $\alpha$ and rectangles $R_b \subset N(\tau^X)$ meeting the
compact core.

Let $t$ be a tie of $R_b$.  Orient $t$. Suppose that $x, y$ are
consecutive (along $t$) points of $\alpha \cap t$.  Suppose that the
sign of intersection at $x$ equals the sign at $y$.  Let $[x,y]$ be
the subarc of $t$ bounded by $x$ and $y$.  As in \reflem{VerticesWide}
surger $\alpha$ along $[x,y]$ to form an arc $\beta'$ and a curve
$\gamma$.  See \reffig{Surger} with $\beta'$ substituted for $\beta$.

Note that $\gamma$ is essential in $S^X$, by an index argument.  If
$\gamma$ is non-peripheral then the second conclusion holds.  So
suppose that $\gamma$ is peripheral.  Then $\alpha$ is obtained by
Dehn twisting $\beta'$ about $\gamma$.  So $\beta'$ is properly
isotopic to $\alpha$ and has smaller intersection with $R_b$; thus we
are done by induction.

Suppose instead that $x, y, z$ are consecutive (along $t$) points of
$\alpha \cap t$, with alternating sign.  Surger $\alpha$ along $[x,z]$
to form an arc $\beta'$ and a curve $\gamma$.  See \reffig{Surger2},
with $\beta'$ substituted for $\beta$, for one of the possible
arrangements of $\alpha$, $\beta'$, and $\gamma$.  Again $\gamma$ is
essential.  If $\gamma$ is non-peripheral then the second conclusion
holds and we are done.  If $\gamma$ is peripheral then, as $\alpha$
and $\beta'$ differ by a half-twist about $\gamma$, we find that
$\beta'$ is properly isotopic to $\alpha$.  Since $\beta'$ has smaller
intersection with $R_b$ we are done by induction.

All that remains is the case that $\alpha$ meets every rectangle $R_b$
in most a pair of arcs of opposite orientation. For every branch $b$
where $\alpha$ meets $R_b$ twice, choose a subarc $t_b$ of a tie in
$R_b$ so that $\alpha \cap t_b = \bdy t_b$.  We call $t_b$ a {\em
chord} for $\alpha$.  For every $t_b$ there is a subarc $\alpha_b
\subset \alpha$ so that $\bdy t_b = \bdy \alpha_b$.  A chord $t_b$ is
{\em innermost} if there is no chord $t_c$ with $\alpha_c$ strictly
contained in $\alpha_b$.  Let $t_b$ be the first innermost chord.  Let
$\alpha'$ be the component of $\alpha \setminus \alpha_b$ before
$\alpha_b$.  Build a route $\beta$ by taking two copies of $\alpha'$
and joining them to $\alpha_b$.  Note that $\beta \cap R_c$ is a
single arc or a pair of arcs exactly as $\alpha_b$ or $\alpha'$ meets
$R_c$.  Thus $\beta$ is wide.  Also, $\beta$ is essential: otherwise
$t_b \cup \alpha_b$ bounds a disk with index one-half, a
contradiction.  By construction $i(\alpha, \beta) = 0$ and
\reflem{Surgery} is proved.
\end{proof}

\subsection{Induced tracks for annuli}
\label{Sec:InducedForAnnuli}

Suppose that $X \subset S$ is an annulus.  Define $\tau|X$ to be the
union of branches $b \subset \tau^X$ so that some element of
$\calA(\tau^X)$ travels along $b$.  (Note that $\tau|X$, if nonempty,
is not compact.)  Define $\calA(\tau|X) = \calA(\tau^X)$ and also the
duals $\calA^*(\tau|X) \supset \calA^*(\tau^X)$.

Define $V(\tau|X)$ in $\calA(\tau|X)$ to be the set of wide carried
arcs.  Define $V^*(\tau|X)$ dually.  


\begin{lemma}
\label{Lem:WideInAnnuli}
Suppose that $X \subset S$ is an essential annulus.  If
$\calA^{(*)}(\tau|X)$ is nonempty then $V^{(*)}(\tau|X)$ is nonempty.
Let $N = N(\tau|X)$.  If $\gamma \eff \tau|X$ is a wide essential arc
then $\gamma$ meets each rectangle of $N$ and each region of $S^X
\setminus N$ in at most a single arc.
\end{lemma}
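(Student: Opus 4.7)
The plan is to handle both the existence claim and the uniqueness claim by a single technique: from any pair of arcs $a_1, a_2$ of $\gamma \cap R_b$ (or $\gamma \cap T$) adjacent on some tie, build a simple closed curve $C$ in $S^X$ and derive a contradiction from the annular topology of $S^X$. I focus on the carried case throughout; the dual case is parallel.

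For existence, take $\alpha \in \calA(\tau|X)$ minimizing $\sum_b w_\alpha(b)$. Suppose $\alpha$ is not wide. Either some tie of some $R_b$ has two consecutive same-sign or three consecutive alternating-sign intersections of $\alpha$ --- in which case the surgery of \reflem{Surgery} yields a strictly smaller-measure carried arc $\beta' \in \calA(\tau|X)$ (essential because it joins the two distinct boundary components of the annulus $S^X$), contradicting minimality --- or some $R_b$ meets $\alpha$ in two arcs of opposite orientation. The latter case is ruled out by the uniqueness argument below, which does not use wideness at its key step. The dual case $V^*(\tau|X)$ follows by a parallel argument using $\calA^*(\tau|X)$ and dual surgeries.

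For the uniqueness claim, let $\gamma \eff \tau|X$ be essential and meet $R_b$ in at least two arcs. Pick arcs $a_1, a_2$ of $\gamma \cap R_b$ that are adjacent on some tie $t$, meeting $t$ at $q_1, q_2$ respectively, and let $t_b$ be the subsegment of $t$ between $q_1$ and $q_2$, so $\gamma \cap t_b = \{q_1, q_2\}$. Let $\gamma_b$ be the subarc of $\gamma$ from $q_1$ to $q_2$ passing through the remainder of $a_1$, outside $R_b$, and back via $a_2$. The simple closed curve $C = t_b \cup \gamma_b$ lies in $\mathrm{int}(S^X)$, so since $S^X$ is an annulus, $C$ is either contractible or core-curve-parallel.

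If $C$ is contractible, it bounds a disk $D \subset S^X$ with $\ind(D) \geq 1/2$ (two right-angled corners at $\bdy t_b$). As $\bdy S^X$ is disjoint from $D$ and $\gamma \setminus \gamma_b$ meets $C$ only at $q_1, q_2$, the arcs $\gamma \setminus \gamma_b$ lie entirely outside $D$, so $\mathrm{int}(D)$ is disjoint from $\gamma$ and $D$ decomposes only under $\tau|X$. Each piece $D \cap R$ of a rectangle is a sub-rectangle of index $0$, and each piece $D \cap T$ of a complementary region is a union of components of $T \setminus \gamma$, each of index $\leq 0$ by efficient position. Additivity gives $\ind(D) \leq 0$, contradicting $\ind(D) \geq 1/2$. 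If $C$ is core-curve-parallel, then $[\gamma] \cdot [C] = \pm 1$ by the non-degenerate pairing $H_1(S^X, \bdy S^X) \times H_1(S^X) \to \ZZ$. However, $\gamma$ meets $C$ only along $\gamma_b \subset C$ and at the tangency points $q_1, q_2$; a local analysis of the turn directions of $C$ at each $q_i$ shows that perturbing $C$ off $\gamma_b$ in a consistent normal direction yields a curve $C'$ having no transverse intersection with $\gamma$, so $\gamma \cdot C = 0$, a contradiction. The case where $\gamma$ meets a complementary region $T$ twice is handled identically, with the chord $t_b$ taken inside $T$.

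The main obstacle is the local intersection analysis in the core-curve-parallel case: one must verify that the consistent normal perturbation of $C$ keeps $C'$ on a single side of $\gamma$ near each of $q_1, q_2$. This is a routine check given the rectangular structure of $N(\tau|X)$ and the key hypothesis $\gamma \cap t_b = \{q_1, q_2\}$, which excludes additional transverse crossings of $\gamma$ with the tie.
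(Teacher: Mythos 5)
Your disk case is fine and essentially matches the paper's one-line index argument, but your core-curve-parallel case has a real problem, and the problem propagates backwards into your existence argument, creating a circularity.

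In the core-parallel case you assert that ``perturbing $C$ off $\gamma_b$ in a consistent normal direction yields a curve $C'$ having no transverse intersection with $\gamma$.'' For that to hold you need the two tails of $\gamma$ — the arcs of $\gamma \setminus \gamma_b$ leaving $q_1$ and $q_2$ — to lie on the \emph{same} side of $C$, so that pushing $C$ off to the other side misses both of them. Whether the tails lie on the same side of $C$ is exactly what the wideness hypothesis (``arcs to the right of each other'') controls: with the wide configuration the two tails leave $q_1$ and $q_2$ on the same side of the tie segment $t_b$, so the push-off works; with the non-wide configuration they leave on opposite sides, the push-off necessarily crosses one tail transversely, $\gamma \cdot C' = \pm 1$, and there is no contradiction. (In fact without wideness there \emph{shouldn't} be a contradiction: non-wide essential arcs can perfectly well meet a rectangle twice.) So the ``routine check'' you defer is precisely the place where wideness enters, and the sentence ``which does not use wideness at its key step'' is false. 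The paper's version of this step — ``if $\gamma' \cup \delta$ bounds an annulus then $\gamma$ was not essential'' — is exactly the observation that with wideness the two tails land on the same side, hence on the same boundary component.

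This makes your existence argument circular: you dispose of the case where some $R_b$ meets $\alpha$ in two arcs ``of opposite orientation'' by citing the uniqueness argument as wideness-free, but it is not. That case must instead be handled the way the paper handles the analogous step in \reflem{Surgery}: when the weight-minimizing $\alpha$ meets each rectangle in at most two arcs but is not wide, you do not get a contradiction — you extract a \emph{different} wide arc by the chord construction (take an innermost chord $t_b$, double the initial segment of $\alpha$, and route through $t_b$), checking it is essential by an index argument. Your categorical ``Either\ldots or\ldots'' also does not exhaust failures of wideness: wideness is a global consistency condition across all rectangles, and you can have at most two arcs per rectangle, all of admissible local type, without a global orientation making them all ``to the right of each other.'' A minor additional issue: \reflem{Surgery} is stated for non-annular $X$, so you are borrowing its technique rather than applying it directly; that's fine, but say so.
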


For example, if $\gamma \carr \tau|X$ is a wide essential arc then
$\gamma$ embeds into $\tau|X$.

\begin{proof}[Proof of \reflem{WideInAnnuli}]
We prove the second conclusion; the first is similar.  Suppose that
$R$ is either a rectangle or region so that $\gamma \cap R$ is a pair
of arcs to the right of each other.  Let $\delta$ be an arc properly
embedded in $R \setminus \gamma$ so that $\delta \cap \gamma = \bdy
\delta$.  Let $\gamma'$ be the component of $\gamma \setminus \bdy
\delta$ so that $\bdy \gamma' = \bdy \delta$.  If $\gamma' \cup
\delta$ bounds a disk in $S^X$ then this disk has index one-half and
we contradict efficient position.  If $\gamma' \cup \delta$ bounds an
annulus then $\gamma$ was not essential, another contradiction.
\end{proof}

Suppose that $\alpha$ is the core curve of the annulus $X$.

\begin{lemma}
\label{Lem:AnnulusEquality}
If $\alpha$ is not carried by $\tau|X$ then $V(\tau|X) =
\calA(\tau|X)$.  If $\alpha$ is not dual to $\tau|X$ then $V^*(\tau|X)
= \calA^*(\tau|X)$. \qed
\end{lemma}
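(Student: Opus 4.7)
The plan is to prove the first claim by contrapositive; the second is proved identically with $\carr$ replaced by $\dual$ throughout. Suppose $\gamma \in \calA(\tau|X) \setminus V(\tau|X)$, so $\gamma$ is a carried essential arc in $S^X$ that is not wide. Orient $\gamma$. Following the case analysis in the proof of \reflem{VerticesWide}, one of the following three situations must occur: (i) some tie $t \subset R_b$ contains consecutive points $x, y \in \gamma \cap t$ of the same sign; (ii) some tie contains three consecutive points $x, y, z$ of alternating sign; or (iii) $\gamma$ meets every rectangle in at most two arcs, but there are rectangles $R_b, R_c$ where the alignment of the arcs disagrees.

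In cases (i) and (ii) I would perform the surgery of \reffig{Surger} or \reffig{Surger2}, adapted for arcs exactly as in the proof of \reflem{Surgery}. This produces a carried arc $\beta'$ together with a carried closed curve $\delta$. An index argument (essentially that of \reflem{Essential}, applied to the disk or annulus that $\delta$ would bound) shows that $\delta$ is essential in $S^X$. The key observation is that the annular cover $S^X$ admits, up to isotopy, a unique embedded essential closed curve, namely the core $\alpha$ of $X$. Therefore $\delta$ is isotopic to $\alpha$, which forces $\alpha \carr \tau|X$ and contradicts the hypothesis. In case (iii), the two subcases of \reffig{Disagree} are handled in turn: the top subcase by simultaneous surgeries of type (i) in both $R_b$ and $R_c$, and the bottom subcase by the parallel-copy construction of \reflem{VerticesWide}. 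In each subcase, at least one essential carried closed curve is produced; by the same observation it must be isotopic to $\alpha$, again yielding $\alpha \carr \tau|X$.

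The main obstacle will be the adaptation of case (iii) to arcs. The parallel-copy construction of \reflem{VerticesWide} is stated for a closed curve $\alpha$, with $\alpha \setminus (R_b \cup R_c)$ decomposing into four arcs; when $\gamma$ is an arc with endpoints, the complement $\gamma \setminus (R_b \cup R_c)$ may decompose differently depending on whether the endpoints of $\gamma$ lie outside $R_b \cup R_c$ or interior to one of them. One must check in each configuration that the surgery genuinely produces a closed essential component carried by $\tau|X$, and not merely a new arc. Once this is verified, the contradiction is obtained just as in cases (i) and (ii). For the dual statement $V^*(\tau|X) = \calA^*(\tau|X)$, the same argument applies verbatim with rectangles replaced by complementary regions of $\tau|X$ and $\carr$ replaced by $\dual$; the surgery then produces a dual essential closed curve in $S^X$, which is isotopic to the core $\alpha$, forcing $\alpha \dual \tau|X$.
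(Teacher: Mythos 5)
Your cases (i) and (ii) are correct, and the key observation --- that a carried, non-null-homotopic closed curve in the annular cover $S^X$ is isotopic to the core $\alpha$, and runs only along branches of $\tau|X$, hence $\alpha\carr\tau|X$ --- is exactly right. The gap is in case (iii), as you suspect, but the parallel-copy construction of \reflem{VerticesWide} is not the right tool and trying to adapt it to arcs will not succeed; the paper already abandons that construction for arcs, handling the third case in \reflem{Surgery} with the innermost-chord argument instead.

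In fact, for a carried essential arc in the annulus, case (iii) is vacuous. Suppose $\gamma$ meets some rectangle $R_b$ in a pair of arcs of opposite sign; take a chord $t_b$ (a subarc of a tie with $t_b\cap\gamma=\bdy t_b$) and the compact subarc $\alpha_b\subset\gamma$ with $\bdy\alpha_b=\bdy t_b$. Since $\gamma$ is an essential arc, cutting the annulus $S^X$ along $\gamma$ gives a disk, and $t_b$ is a chord of that disk; so the closed curve $t_b\cup\alpha_b$ necessarily bounds a disk $D$ in $S^X$ (for non-annular $X$, by contrast, $t_b\cup\alpha_b$ may fail to bound a disk, which is why \reflem{Surgery} does not reach a contradiction at this point). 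Since $\bdy D\subset N(\tau^X)$, the disk $D$ is tiled by right-angled pieces of $N(\tau^X)$, each of index $0$, together with whole compact complementary regions of $\tau^X$, each of negative index, giving $\ind(D)\le 0$. But $D$ has exactly two right-angle corners, so $\ind(D)>0$, a contradiction. (This is precisely the disk subcase in the proof of \reflem{WideInAnnuli}.) Hence, once cases (i) and (ii) are ruled out, $\gamma$ meets every rectangle at most once and so is wide. The dual statement follows the same pattern with rectangles replaced by complementary regions.
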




\begin{lemma}
\label{Lem:Combed}
Suppose that $\alpha \carr \tau|X$.  One side of $\alpha$ is combed if
and only if both sides are combed in the same direction if and only if
some isotopy representative of $\alpha$ is dual to $\tau|X$. \qed
\end{lemma}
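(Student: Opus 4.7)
I plan to prove the three equivalences along the cycle $(C) \Rightarrow (A) \Rightarrow (B) \Rightarrow (C)$; the remaining implication $(B) \Rightarrow (A)$ is immediate. The main tool is a local classification of the switches of $\tau|X$ lying on the train-loop $\alpha$: at each such switch $p$, the off-branch is necessarily a small half-branch, and a local computation with the definition of twist direction yields the dictionary --- an upper off-branch at $p$ twists right iff the large half-branch of $p$ lies on the backward side of $\alpha$ relative to $\alpha$'s orientation, while a lower off-branch at $p$ twists right iff the large lies on the forward side.

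The implication $(A) \Rightarrow (B)$ is the crucial step, and the only place where the induced-track hypothesis --- every branch of $\tau|X$ is traversed by an arc in $\calA(\tau^X)$ --- is essentially used. Suppose the upper side is right-combed, so every upper off-branch has large-on-backward-side. Suppose for contradiction that some lower off-branch $b$ at a switch $p$ has large-on-backward-side. Tracing the train-route through $b$ in either of its two parameterizations, the route's velocity at $p$ is either forward or backward along $\alpha$. The backward-velocity route exits $p$ along the large half of $\alpha$ in the backward direction, and at each subsequent switch the right-combing hypothesis prevents exit via an upper off-branch (since exiting via a case-(a) upper off-branch would require forward velocity). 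Symmetrically, the forward-velocity route cannot begin at $\bdy_+ X$ by entering an upper off-branch, since entering a case-(a) upper off-branch forces backward velocity. Consequently no essential arc crosses $b$, contradicting the definition of $\tau|X$; the symmetric argument handles the left-combed case.

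For $(B) \Rightarrow (C)$, assume both sides are right-combed. Push $\alpha$ slightly into the upper sub-annulus of $X$: along each $\alpha$-branch the push-off $\alpha'$ runs in the upper horizontal boundary of $N = N(\tau|X)$, and at each switch with an upper off-branch $b$ the curve $\alpha'$ crosses $R_b$ via a single tie near the switch. The right-combing hypothesis guarantees these pieces assemble into a simple closed curve isotopic to $\alpha$ whose intersection with $N$ is a union of ties, and an index count on the complementary regions of $X \setminus (N \cup \alpha')$ yields efficient position, so $\alpha' \dual \tau|X$. For $(C) \Rightarrow (A)$, given a dual representative $\alpha'$, isotope it disjoint from $\alpha$ into one sub-annulus, say upper; each tie-crossing of $\alpha'$ with an upper off-branch then pins down the local twist direction of that off-branch uniformly (determined by the side of the push), giving a combing of the upper side. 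The main obstacle is the route-tracing argument in $(A) \Rightarrow (B)$; the remaining implications follow from the local twist-versus-large-direction dictionary together with the tie neighborhood structure.
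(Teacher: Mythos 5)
The paper states this lemma with a bare \qed and gives no proof, so there is nothing to compare your approach against directly; your cycle $(C)\Rightarrow(A)\Rightarrow(B)\Rightarrow(C)$, and the twist/rising--falling dictionary you set up, are the natural framework, and the dictionary itself is correct.

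The gap is in $(A)\Rightarrow(B)$, which is the load-bearing step. Your argument traces the train-route $\gamma$ through $b$ and controls only its \emph{first} touch with $\alpha$: you show that the backward-velocity portion, once it lands on $\alpha$ at $p$, cannot leave via an upper off-branch before its first exit, and that the other end of $\gamma$ cannot reach $\bdy_+ X$ ``by entering an upper off-branch.'' But $\gamma$ is not required to touch $\alpha$ only once. After bouncing off $\alpha$ near $p$ (entering at the falling-lower switch $p$, travelling backward, and exiting at a rising switch via a lower off-branch), $\gamma$ returns to the lower sub-annulus on both sides of this touch; nothing in your argument prevents it from subsequently re-entering $\alpha$ at a \emph{rising} switch with a lower off-branch, running forward, and exiting at a falling switch via an upper off-branch --- a genuine crossing, using $\alpha$-branches disjoint from those consumed by the first touch, so compatible with embeddedness. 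The sentence ``Consequently no essential arc crosses $b$'' therefore does not follow from what precedes it; you need either an argument that accounts for all the touches of $\gamma$ with $\alpha$ at once (e.g.\ an index/counting argument on the complementary regions adjacent to $\alpha$, or a careful analysis of which $\alpha$-branches each touch must consume), or an explanation of why such a multi-touch crossing is impossible for an induced track. Separately, in $(B)\Rightarrow(C)$ you assert ``an index count ... yields efficient position'' without carrying it out, and your sketch pushes $\alpha$ only to the upper side, so it is not visible where the hypothesis that the \emph{lower} side is also combed (in the same direction) is actually used; as written, the sketch would seem to prove $(A)\Rightarrow(C)$ directly, which should make you suspicious that something is being glossed over.
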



\section{Finding efficient position}
\label{Sec:FindEffPos}


After discussing the various sources of non-uniqueness we prove in
\refthm{EffPos} that efficient position exists

Let $N = N(\tau)$; suppose that $\alpha \eff \tau$.  A rectangle $T
\subset S \setminus (N \cup \alpha)$ is {\em vertical} if $\bdy T$ has
a pair of opposite sides meeting $\alpha$ and $\bdy_v N$ respectively.
Define {\em horizontal} rectangles similarly.  \reffig{RectSwap}
depicts the two kinds of {\em rectangle swap}.

\begin{figure}[htbp]
$$\begin{array}{c}
\includegraphics[height=3.5cm]{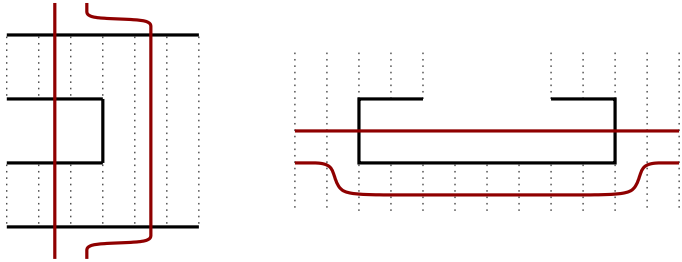} 
\end{array}$$
\caption{Pieces of $N$ are shown, with vertical and horizontal
  boundary in the correct orientation; the dotted lines are ties.  The
  left and right pictures show a vertical and horizontal rectangle
  swap, respectively.}
\label{Fig:RectSwap}
\end{figure}

Now suppose that $\alpha \carr \tau$, every rectangle $R_b \subset N$
meets $\alpha$ in at most a single arc, and one side of $\alpha$ is
combed.  Let $A$ be a small regular neighborhood of $\alpha$.  Then an
{\em annulus swap} interchanges $\alpha$ and the component of $\bdy A$
on the combed side.  See \reffig{AnnSwap}.

\begin{figure}[htbp]
$$\begin{array}{c}
\includegraphics[height=3.5cm]{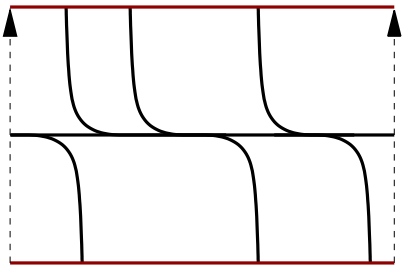} 
\end{array}$$
\caption{Both sides of $\alpha$ are combed (to the left).  Thus both
  boundary components of the annulus shown are dual to $\tau$ and both
  differ from the carried core curve by an annulus swap.}
\label{Fig:AnnSwap}
\end{figure}

\begin{theorem}
\label{Thm:EffPos}
Suppose that $\xi(S) \geq 1$ and $\tau \subset S$ is a birecurrent
train track.  Suppose that $\Delta \subset \AC(S)$ is a multicurve.
Then efficient position for $\Delta$ with respect to $\tau$ exists and
is unique up to rectangle swaps, annulus swaps, and isotopies of $S$
preserving the foliation of $N(\tau)$ by ties.
\end{theorem}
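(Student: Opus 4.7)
The proof naturally splits into existence and uniqueness. I would organize it around a single ``taut representative'' — a transverse representative of $\Delta$ that minimizes, lexicographically, the pair $\bigl(|\Delta \cap \bdy_h N|,\, |\Delta \cap \tau|\bigr)$ subject to transversality with $\bdy_h N$ and with the foliation of $N$ by ties.

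For existence, take a taut representative and first verify the interior condition on $\Delta \cap N$. If some component of $\Delta \cap N$ contained both a tie-crossing and a $\bdy_h N$-crossing, then walking along it one would find an innermost subarc whose two endpoints lie on the same component of $\bdy_h N$ (or on the same tie). This subarc, together with an arc of $\bdy_h N$ (or of a tie), bounds a disk-bigon in $N$, and an isotopy across this bigon strictly reduces $|\Delta \cap \bdy_h N|$ (or $|\Delta \cap \tau|$), contradicting tautness. Hence every component of $\Delta \cap N$ is everywhere transverse to the ties (carried) or everywhere parallel to them (a tie). For the second condition, suppose $T \subset S \setminus (N \cup \Delta)$ is a region with $\ind(T) \geq 0$ and $T$ not a rectangle. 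Since $\ind(N) = 0$ and the $\tau$-complementary regions all have negative index, $T$ must sit inside a single $\tau$-complementary region $T'$ with $\ind(T') < 0$, which forces $T$ to contain a disk-bigon or a boundary-parallel annulus between $\Delta$ and $\bdy_h N$. A disk-bigon allows an isotopy reducing $|\Delta \cap \bdy_h N|$; a peripheral annulus bigon would show a component of $\Delta$ is trivial or peripheral in $S$, contradicting the hypothesis that $\Delta$ is a multicurve of essential arcs and non-peripheral curves. This establishes efficient position.

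For uniqueness, suppose $\Delta_0, \Delta_1$ are two efficient positions of $\Delta$ and fix an ambient isotopy $\{\Phi_t\}_{t \in [0,1]}$ of $S$, in general position, with $\Phi_0 = \Id$ and $\Phi_1(\Delta_0) = \Delta_1$. Consider the finite set of times $t$ at which $\Phi_t(\Delta_0)$ fails to be in efficient position. By genericity each such failure is codimension one: either (i) two branch-arcs of $\Phi_t(\Delta_0) \cap R_b$ exchange sides across a tie, (ii) an arc of $\Phi_t(\Delta_0)$ crosses a component of $\bdy_v N$, or (iii) a carried component of $\Phi_t(\Delta_0)$ passes through a combed configuration and moves to a dual representative. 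Using the definition of efficient position and the structure of $N$ near $\bdy_h N \cup \bdy_v N$, each of (i), (ii), (iii) is realized by a rectangle swap (horizontal or vertical, cf.\ \reffig{RectSwap}) or, when one side of a carried component is combed (\reflem{Combed}), by an annulus swap (\reffig{AnnSwap}). Composing with tie-preserving isotopies of $N$ and isotopies supported outside $N$ gives the desired factorization.

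The main obstacle is the complementary-region step in existence: one must rule out a nonnegative-index, non-rectangular region $T$ by combining birecurrence of $\tau$, essentiality of every component of $\Delta$, and the index additivity formula, while keeping track of how carried and tied arcs of $\Delta \cap N$ partition a $\tau$-complementary region into sub-regions. The case where $\Delta$ contains a carried curve whose regular neighborhood is an annulus requires special care, since here an additional annulus bigon to $\bdy_h N$ can exist without contradicting minimality of $|\Delta \cap \bdy_h N|$ — this ambiguity is precisely what produces the annulus swap in the uniqueness statement, and is why the theorem allows annulus swaps as a second source of non-uniqueness.
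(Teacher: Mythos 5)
Your proposal takes a fundamentally different route from the paper in both halves, and each half has a genuine gap.

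For existence, you propose minimizing $\bigl(|\Delta \cap \bdy_h N|,\, |\Delta \cap \tau|\bigr)$ over transverse representatives and then arguing that a taut representative is efficient. The missing step is showing that tautness actually forces every component of $\Delta \cap N$ to be a tie or a carried arc. A component can enter through $\bdy_h N$ and exit through $\bdy_v N$ while staying monotone in the tie direction; such an arc produces no tie--$\Delta$ bigon inside $N$, and whether it produces a $\bdy_h N$--$\Delta$ bigon depends on what $\Delta$ does in the adjacent complementary region, which is a \emph{global} question your local innermost-bigon argument does not reach. Note also that your argument never invokes birecurrence, while the paper's existence proof uses it twice in an essential way: transverse recurrence supplies the Penner--Harer hyperbolic metric with nearly-geodesic train-routes (\cite[Theorem~1.4.3]{PennerHarer92}), and recurrence is what makes \refclm{RisingFalling} work (finitely many rising branches would produce an oppositely-combed carried curve). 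The paper's whole existence machinery --- lifting to $\HH$, the convexity of $\calK = \HH \setminus (\calH_{y,x} \cup \calH_{x,y})$, equivariant bigon removal indexed by the count $\Orbit$ --- exists precisely because a naive minimization does not obviously terminate in efficient position, and your sketch does not supply the idea that would make it do so.

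For uniqueness, you propose putting a path of ambient isotopies in general position and reading off rectangle and annulus swaps at codimension-one wall crossings. This is a Cerf-style argument that would need a carefully constructed stratification of the space of representatives and a transversality theorem for paths, none of which is set up; as written, the claim that (i)--(iii) exhaust the codimension-one degeneracies is an assertion, not an argument. The paper's uniqueness proof is elementary by comparison: it inducts on $i(\alpha, \beta)$ using the Bigon Criterion, and the key technical point --- which your proposal omits entirely --- is ruling out a ``dual intersection'' (a bigon corner where a tie arc of $\alpha$ meets a carried arc of $\beta$) by following a one-sided half-route $\rho$ out of the corner and showing it must escape the bigon, contradicting either \reflem{Essential} or efficient position (see \reffig{DualIntersection}). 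Once that is ruled out, a corner-truncation reduces to the zero-intersection case, which is handled by an index computation on the cobounded annulus $A$ and a cascade of rectangle and annulus swaps. You should either supply the singularity-theoretic scaffolding your approach needs, or switch to the paper's intersection-number induction.
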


\begin{remark}
When $S = S_1$ is a torus, Lemma~14 of~\cite{Gueritaud09} proves the
existence of efficient position for curves with respect to Reebless
bigon tracks.  Uniqueness of efficient position follows from a slight
generalization of \refsec{Unique} using {\em bigon swaps}.
\end{remark}

\subsection{Uniqueness of efficient position}
\label{Sec:Unique}



Suppose that $\alpha$ and $\beta$ are isotopic curves and in efficient
position with respect to $\tau$.  We induct on $i(\alpha, \beta)$.
For the base case suppose that $|\alpha \cap \beta| = 0$.  Then
$\alpha$ and $\beta$ cobound an annulus $A \subset S$ so that $\bdy A$
has no corners~\cite[Lemma~2.4]{Epstein66}.
Since $N = N(\tau)$ is a union of rectangles the intersection $A \cap
N$ is also a union of rectangles.  Thus $\ind(N \cap A) = 0$.  By the
hypothesis of efficient position any region $T \subset \closure{A
\setminus N}$ has non-positive index and has all corners outwards.  By
the additivity of index it follows that $\ind(T) = 0$.  It follows
that each region $T$ is either an annulus without corners or a
rectangle.

Suppose that some region $T$ is an annulus without corners.  Then we
must have $T = A$.  For if $\bdy T$ meets $\bdy N$ then $\bdy N$ has a
component without corners, contrary to assumption.  Since $T = A$ it
follows that $\alpha$ and $\beta$ are isotopic in the complement of
$N$ and we are done.

So we may assume that all regions of $A \setminus N$ are rectangles.
(In particular, $A \cap N \neq \emptyset$.)  Note that if a region $R$
is a horizontal rectangle then there is no obstruction to doing a
rectangle swap across $R$.  After doing all such swaps we may assume
that $A \setminus N$ contains no horizontal rectangles.

We now abuse terminology slightly by assuming that the position of $N$
determines that of $\tau$.  So if $A$ contains vertical rectangles
then there are switches of $\tau$ contained in $A$.  This implies that
$A$ contains half-branches of $\tau$.  Let $b'$ be a half-branch in
$A$, meeting $\bdy A$.  If $b'$ is large then there is a vertical
rectangle swap removing three half-branches from $A$.  After doing all
such swaps we may assume that any such $b'$ is small.  If $R$ is a
vertical rectangle meeting $\bdy A$ then $R$ has two horizontal sides.
If neither of these meets a switch on its interior then again there is
a swap removing three half-branches from $A$.

After doing all such swaps if there are still vertical rectangles in
$A$ then we proceed as follows: every vertical rectangle must have a
horizontal side that properly contains the horizontal side of another
vertical rectangle.  (For example, in \reffig{AnnSwap} number the
rectangles above the core curve $R_0, R_1, R_2$ from left to right.
Note that the left horizontal side of $R_i$ strictly contains the
right horizontal side of $R_{i-1}$.)  It follows that the union of
these vertical rectangles gives an annulus swap which we perform.
Thus, we are reduced to the situation where $A$ contains no horizontal
or vertical rectangles.

If $A \subset N$ then $\alpha$ and $\beta$ are both carried.  For any
tie $t \subset N$, any component $t' \subset t \cap A$ is an essential
arc in $A$.  (To see this, suppose that $t'$ is inessential.  Let $B
\subset A$ be the bigon cobounded by $t'$ and $\alpha' \subset
\alpha$, say.  Since $\alpha$ is carried, $\alpha'$ is transverse to
the ties. We define a continuous involution on $\alpha'$; for every
tie $s$ and for every component $s' \subset s \cap B$ transpose the
endpoints of $s'$.  As this involution is fixed point free, we have
reached a contradiction.)  It follows that $A$ is foliated by subarcs
of ties and we are done.

There is one remaining possibility in the base case of our induction:
$A \cap N \neq \emptyset$, $A \not\subset N$, and $A$ contains no
switches of $\tau$.  Thus every region of $A \cap N$ and of $A
\setminus N$ is a rectangle meeting both $\alpha$ and $\beta$.  Any
region $R$ of $A \cap N$ is foliated by (subarcs of) ties and, as
above, all ties meet $R$ essentially.  Thus $R$ gives a parallelism
between (carried arcs) ties of $\alpha$ and $\beta$.  It follows that
$A$ gives an isotopy between $\alpha$ and $\beta$, sending ties to
ties. This completes the proof of uniqueness when $|\alpha \cap \beta|
= 0$.

For the induction step assume $|\alpha \cap \beta| > 0$.  Since
$\alpha$ is isotopic to $\beta$ the Bigon
Criterion~\cite[Lemma~2.5]{Epstein66},
\cite[Proposition~1.3]{FarbMargalit10} implies that there is a disk $B
\subset S$ with exactly two outward corners $x$ and $y$ so that $B
\cap (\alpha \cup \beta) = \bdy B$.  Suppose that $x$ is a {\em dual
intersection}: an intersection of a tie of $\alpha$ and a carried arc
of $\beta$.  See \reffig{DualIntersection}.

\begin{figure}[htbp]
\labellist
\small\hair 2pt
\pinlabel {$\alpha$} [bl] at 228.4 120.4
\pinlabel {$\beta$} [l] at 291.6 20.6
\endlabellist
$$\begin{array}{c}
\includegraphics[height=3.5cm]{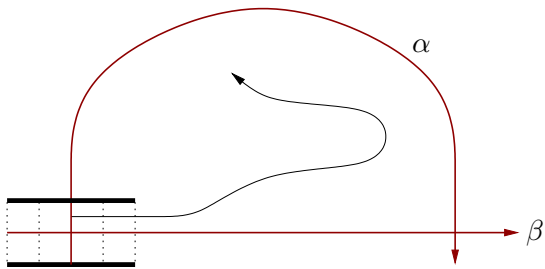} 
\end{array}$$
\caption{The left corner is a dual intersection, between a tie of
  $\alpha$ and a carried arc of $\beta$.  If the half-route $\rho$
  exits through $\alpha$ or $\beta$ then a bigon or non-trivial trigon
  is created.}
\label{Fig:DualIntersection}
\end{figure}

Let $\alpha' = \alpha \cap B$ and $\beta' = \beta \cap B$.  Orient
$\beta'$ away from $x$.  Let $z \in \alpha'$ be immediately adjacent
to $x$.  Without loss of generality we may assume that $z$ is to the
left of $\beta'$, near $x$.  Let $\rho$ be the half-route starting at
$z$, initially agreeing with $\beta$, and turning left at every
switch.  If $\rho \subset B$ then eventually $\rho$ repeats a branch
$b$ in the same direction; it follows that there is a curve $\gamma
\carr \tau$ contained in $B$ contradicting \reflem{Essential}.
However, if $\rho$ exits $B$ through $\alpha'$ ($\beta'$) then we
contradict efficient position of $\alpha$ ($\beta$).

It follows that the corner $x$ either lies in $S \setminus N$ or is
the intersection of carried arcs of $\alpha$ and $\beta$.  The same
holds for $y$.  In either case we cut off of $B$ a small neighborhood
of $x$ and of $y$: when the corner lies in $N$ we use a subarc of a
tie to do the cutting.  The result $B'$ is a rectangle with the
components of $\bdy_h B$ contained in $\alpha$ and $\beta$
respectively.  As $\ind(B') = 0$ the argument given in the case of an
annulus gives a sequence of rectangle swaps moving $\alpha$ across
$B$.
This reduces $|\alpha \cap \beta|$ by two and so completes the
induction step. 

The proof when $\alpha$ and $\beta$ are arcs follows the above but
omitting any mention of annulus swaps.

Finally, suppose that $\Delta, \Gamma$ are isotopic multicurves, both
in efficient position.  We may isotope $\Gamma$ to $\Delta$, as above,
being careful to always use innermost bigons.  This completes the
proof that efficient position is unique.

We end this subsection with a useful corollary:

\begin{corollary}
\label{Cor:EfficientIntersection}
Suppose that $\Gamma \subset \AC(S)$ is a finite collection of arcs
and curves in efficient position.  Then we may perform a sequence of
rectangle swaps to realize the pairwise geometric intersection
numbers.
\end{corollary}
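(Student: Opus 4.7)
The plan is to induct on the total excess intersection
\[
E(\Gamma) = \sum_{\{\alpha,\beta\} \subset \Gamma} \big( |\alpha \cap \beta| - i(\alpha,\beta) \big),
\]
reducing $E(\Gamma)$ by two at each step via a rectangle swap supported in a carefully chosen bigon. If $E(\Gamma) = 0$ the conclusion holds, so suppose some pair $\alpha, \beta \in \Gamma$ has $|\alpha \cap \beta| > i(\alpha,\beta)$. By the Bigon Criterion \cite[Lemma~2.5]{Epstein66}, \cite[Proposition~1.3]{FarbMargalit10} some pair of elements of $\Gamma$ cobounds a bigon. Select $B$ to be innermost among all bigons cobounded by any pair in $\Gamma$; then $\mathring{B}$ is disjoint from every element of $\Gamma$ other than the two that cobound $B$. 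Relabel so $\bdy B \subset \alpha \cup \beta$.

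Next I would import wholesale the induction step from the proof of \refthm{EffPos}. The half-route argument around \reffig{DualIntersection} uses only the efficient position of $\alpha$ and $\beta$ and the essentiality result \reflem{Essential}, so it applies here to rule out dual-intersection corners. Each of the two corners of $B$ therefore lies either in $S \setminus N$ or at an intersection of carried arcs. Cut off a small tie-bounded neighborhood of each corner to produce a subrectangle $B' \subset B$ with $\bdy_h B' \subset \alpha \cup \beta$ and $\ind(B') = 0$. The analysis of the annulus base case (killing horizontal rectangles by swaps, then vertical rectangles by swaps, and finally observing that any remaining region is foliated essentially by ties) now produces a sequence of rectangle swaps that moves $\alpha$ across $B'$, dropping $|\alpha \cap \beta|$ by exactly two.

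The crucial observation is that each of these swaps is supported inside $B$ together with at most one adjoining rectangle of $N$ that lies in the tie-neighborhood of $\bdy B$; since $\mathring{B}$ meets no other element of $\Gamma$, and tie-preserving isotopies in $N$ do not alter the isotopy classes of carried arcs of other curves, no $\gamma \in \Gamma \setminus \{\alpha\}$ is displaced. Hence for every pair other than $\{\alpha,\beta\}$ the intersection count is preserved, while $|\alpha \cap \beta|$ drops by two. Efficient position of $\alpha$ is preserved by the definition of rectangle swap. Thus $E(\Gamma)$ strictly decreases and the induction closes.

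The main obstacle, and the reason for the innermost choice, is precisely the bookkeeping between pairs: if $\mathring{B}$ contained strands of some third curve $\gamma \in \Gamma$, then a swap moving $\alpha$ across $B$ could create new intersections of $\alpha$ with $\gamma$ and undo progress made earlier. The standard innermost-bigon trick on the multicurve $\bigcup \Gamma$ sidesteps this, and is the only genuinely new ingredient beyond \refthm{EffPos}.
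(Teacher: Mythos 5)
Your overall strategy — induct on the total excess intersection
\[
E(\Gamma) = \sum_{\{\alpha,\beta\} \subset \Gamma}\bigl( |\alpha \cap \beta| - i(\alpha,\beta)\bigr)
\]
and shave off one innermost bigon at a time via the rectangle-swap mechanism from \refthm{EffPos} — differs slightly from the paper's, which instead inducts on $|\Gamma|$: it first realizes all pairwise intersections of $\Gamma' = \Gamma \setminus \{\gamma_k\}$ and then corrects $\gamma_k$ by innermost bigons. Either scheduling works; yours is arguably more symmetric.

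However there is a genuine gap in the step you identify as crucial. You claim that once $B$ is chosen innermost among \emph{all} bigons cobounded by pairs in $\Gamma$, its interior is disjoint from every element of $\Gamma$ other than $\alpha$ and $\beta$. This is false. Innermostness only rules out arcs of a third curve $\gamma$ entering and leaving $B$ through the same side: such an arc would cobound, with a subarc of $\alpha$ (or of $\beta$), a strictly smaller bigon. It does \emph{not} rule out arcs of $\gamma$ crossing $B$ from the $\alpha$-side to the $\beta$-side; together with $\bdy B$ such an arc cuts off triangles, not bigons, so no contradiction arises. A concrete example: in an annulus, let $\alpha$ be the core, $\beta$ a parallel copy meeting $\alpha$ in two points so that they cobound a single bigon $B$, and let $\gamma$ be an essential spanning arc passing once through $B$. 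Then $\gamma$ is already in minimal position with both $\alpha$ and $\beta$, yet $\gamma$ meets $\mathring B$. Consequently your justification that ``no $\gamma \in \Gamma \setminus \{\alpha\}$ is displaced,'' which rests entirely on the disjointness of $\mathring B$ from the rest of $\Gamma$, does not go through as written.

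The argument is nonetheless repairable, and the repair is exactly what both your proof and the paper's are implicitly leaning on. Since $B$ is innermost, every arc of a third curve $\gamma \in \Gamma$ meeting $\mathring B$ runs from the $\alpha$-side of $\bdy B$ to the $\beta$-side. Pushing $\alpha$ across $B$ therefore slides each such intersection point of $\alpha \cap \gamma$ along the corresponding strand of $\gamma$ toward $\beta$, without creating or destroying any intersection: $|\alpha \cap \gamma|$ is unchanged, and $\gamma$ itself is never moved, so all other pairwise intersection counts are preserved while $|\alpha \cap \beta|$ drops by two. With that observation substituted for the false disjointness claim, your induction on $E(\Gamma)$ closes correctly.
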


\begin{proof}
Let $\Gamma = \{ \gamma_i \}_{i=1}^{k}$.  By induction, the curves of
$\Gamma' = \Gamma \setminus \{ \gamma_k \}$ realize their pairwise
geometric intersection numbers.  If $\gamma_k$ meets some $\gamma_i
\in \Gamma'$ non-minimally then by the Bigon Criterion~\cite[page
46]{FLP91} there is an innermost bigon between $\gamma_k$ and some
$\gamma_j \in \Gamma'$.  We now may reduce the intersection number
following the proof of uniqueness of efficient position.
\end{proof}



\subsection{Existence of efficient position}
\label{Sec:Exists}

Our hypotheses are weaker, and thus our discussion is more detailed,
but the heart of the matter is inspired by~\cite[pages
122-123]{MasurMinsky99}.

We may assume that $\tau$ fills $S$; for if not we replace $S$ by the
subsurface filled by $\tau$.  Since $\tau$ is transversely recurrent
for any $\epsilon, L > 0$ there is a finite area hyperbolic metric on
the interior of $S$ and an isotopy of $\tau$ so that: every branch of
$\tau$ has length at least $L$ and every train-route $\rho \carr \tau$
has geodesic curvature less than $\epsilon$ at every
point~\cite[Theorem~1.4.3]{PennerHarer92}.

Let $\tau^\HH$ be the lift of $\tau$ to $\HH = \HH^2$, the universal
cover of $S$.  Every train-route $\rho \carr \tau^\HH$ cuts $\HH$ into
a pair $H^\pm(\rho)$ of open {\em half-planes}.  Fix a route $\rho
\carr \tau^\HH$ and a half-branch $b' \subset \tau^\HH$ so that there
is some $n \in \ZZ$ with $b' \cap \rho = \rho(n)$.  We say the branch
$b$ is {\em rising} or {\em falling} with respect to $\rho$ as the
large half-branch at the switch $\rho(n)$ is contained in
$\rho|[n,\infty)$ or contained in $\rho|(-\infty,n]$.

\begin{claim}
\label{Clm:RisingFalling}
For any route $\rho \carr \tau^\HH$ one side of $\rho$ has infinitely
many rising branches while the other side has infinitely many falling
branches.
\end{claim}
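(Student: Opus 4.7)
The plan is to combine the global quasi-geodesic geometry of $\rho$ in $\HH$ with a careful analysis of the local structure at each switch of $\rho$.  First, by~\cite[Proposition~3.3.3]{Mosher03} applied in the hyperbolic metric on $S$ chosen above, every bi-infinite train-route in $\tau^\HH$ is a properly embedded uniform quasi-geodesic, so $\rho$ has two distinct ideal endpoints $\xi^\pm \in \partial \HH$, the two open half-planes $H^\pm(\rho)$ are well-defined, and each third half-branch $b'_n$ at $\rho(n)$ lies unambiguously in one of them.

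Second, I would pair the rising/falling classification of each switch with the side of $\rho$ on which $b'_n$ lies.  At a rising switch $\rho(n)$, the incoming half-branch of $\rho$ is small and the outgoing is the large half-branch, so the third half-branch $b'_n$ (the remaining small) emanates from $\rho(n)$ together with the incoming small of $\rho$, on the side opposite the large half-branch direction.  The two small half-branches form a cusp of a complementary region of $\tau^\HH$ located on the past ($\rho|(-\infty,n]$) side of $\rho$; at a falling switch the symmetric cusp lies on the future ($\rho|[n,\infty)$) side.  Using the orientations of $S$ and of $\rho$, this local picture identifies which of $H^\pm(\rho)$ contains $b'_n$, with rising switches consistently contributing to one half-plane and falling switches to the other.

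Third, I would show that each half-plane receives infinitely many $b'_n$.  Suppose some half-plane, say $H^+(\rho)$, contained only finitely many third half-branches.  Then for all sufficiently large $|n|$ we would have $b'_n \subset H^-(\rho)$, meaning the tie neighborhood of $\rho$ together with an unbounded region of $H^+(\rho)$ would be disjoint from $\tau^\HH \setminus \rho$.  By cocompactness of the $\pi_1(S)$-action on $\HH$, and since $\tau$ fills $S$ (so complementary regions are disks or peripheral annuli), this forces $\rho$ to asymptote in both directions to the boundary of a peripheral lift, contradicting the assumption that $\rho$ has distinct ideal endpoints $\xi^+ \neq \xi^-$.  The same cocompactness argument, applied to sub-rays of $\rho$, produces infinitely many rising and infinitely many falling switches overall; combining with the pairing of the second step gives the claim.

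The main obstacle is the second step: establishing that the pairing of rising/falling with side of $\rho$ is consistent over all of $\rho$, rather than varying from switch to switch.  I expect to handle this by tracking the orientation of $\rho$'s direction of travel together with the orientation of $S$, to show that the ``past cusp side'' at every rising switch is always the same half-plane of $\HH$, and symmetrically the ``future cusp side'' at every falling switch is the opposite half-plane.
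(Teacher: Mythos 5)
The central step of your argument (your second paragraph, which you yourself flag as the main obstacle) is false: at a given switch of $\rho$ the side on which the third half-branch $b'_n$ lies is \emph{independent} of whether that switch is rising or falling.  At a rising switch the two small half-branches are $\rho$'s past and $b'_n$, and they curve to opposite sides of the tangent line; nothing forces $b'_n$ onto a fixed side as you move along $\rho$.  It is easy to draw a route with two consecutive rising switches whose third branches peel off to opposite sides, so no amount of orientation-tracking will produce the ``consistent pairing'' you want.  With that step gone, your third paragraph only controls the number of half-branches on each side, not the rising/falling count, and your final sentence (that cocompactness alone yields infinitely many rising and infinitely many falling switches) is unsupported.

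The deeper issue is that your argument never invokes recurrence of $\tau$, yet recurrence is exactly what the claim needs.  Consider a non-recurrent track containing a carried loop $\gamma$ into which every adjacent branch feeds (a ``sink''): a lift $\rho$ of $\gamma$ to $\HH$ is a quasi-geodesic with two distinct ideal endpoints, has infinitely many half-branches on each side, and yet \emph{every} one of them is falling.  Your proposal cannot distinguish this case.  The paper's proof does two things separately: (i) each side of $\rho$ meets infinitely many half-branches, because otherwise, by cocompactness, a side of $N(\rho)$ would project to a component of $\bdy_h N(\tau)$ with no corners, violating \refdef{Track}; and (ii) there are infinitely many rising and infinitely many falling switches along $\rho$, because if (say) the rising ones were finite, a periodic sub-segment past them would project to a carried loop $\gamma$ with $w_\gamma \leq 1$, all of whose adjacent branches point the same way; the switch conditions around the loop then force those branches to carry weight zero in every transverse measure, contradicting recurrence (this is what ``both sides combed in opposite directions'' encodes).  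Combining (i) and (ii) gives the side-separated statement, since if a side carries only finitely many rising branches it must, by (i), carry infinitely many falling ones, and vice versa.  I'd suggest rebuilding the argument around recurrence rather than the false side/rising correspondence.
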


\begin{proof}
Note that there are infinitely many half-branches on both sides of
$\rho$: if not then $\bdy_h N(\tau)$ would have a component without
corners, contrary to assumption.
Suppose that there are only finitely many rising branches along
$\rho$.  Then there is a curve $\gamma \carr \tau$ so that
$w_\gamma(b) \leq 1$ for every branch $b$ and so that the two sides of
$\gamma$ are combed in opposite directions.  
Thus $\tau$ is not recurrent, a contradiction.  The same contradiction
is obtained if there are only finitely many falling branches along
$\rho$.
\end{proof}

\begin{claim}
\label{Clm:Converge}
For any route $\rho \carr \tau^\HH$ and for any family of half-routes
$\{ \beta_n \}$ if $\beta_n \cap \rho = \rho(n)$ then $\lim_{n \to
\infty} \beta_n(\infty) = \rho(\infty)$.
\end{claim}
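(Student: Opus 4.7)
The plan is to form, for each $n$, a bi-infinite train-route
$\tilde\beta_n = \rho|(-\infty, n] \cdot \beta_n$, and then apply
stability of quasi-geodesics in $\HH$ to compare $\tilde\beta_n$ with
$\rho$.

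The first step is to verify that $\tilde\beta_n$ is a legal
train-route in $\tau^\HH$. At the switch $\rho(n)$ the route $\rho$
uses the large half-branch (call it $c$) together with one of the two
small half-branches (call it $a$). Since $\beta_n \cap \rho =
\{\rho(n)\}$, the half-route $\beta_n$ cannot begin along $c$ or along
$a$ without sharing an entire branch with $\rho$, so it must begin
along the remaining small half-branch $b$. Hence $\tilde\beta_n$
enters $\rho(n)$ on the large branch $c$ and exits on the small
branch $b$, a legal large/small passage that traverses the switch
with the same tangent direction as $\rho$. It follows that
$\tilde\beta_n$ is a bi-infinite train-route from $\rho(-\infty)$ to
$\beta_n(\infty)$.

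By~\cite[Proposition~3.3.3]{Mosher03}, every train-route in $\tau^\HH$
is a $K$--quasi-geodesic in $\HH$ for a universal constant $K$. Thus
both $\rho$ and $\tilde\beta_n$ are $K$--quasi-geodesics, and their
forward rays from the basepoint $o = \rho(0)$ share the common subarc
$\rho|[0,n]$, which has hyperbolic length at least $nL$. Applying the
Morse lemma in the $\delta$--hyperbolic space $\HH$, the genuine
geodesic rays from $o$ to $\rho(\infty)$ and from $o$ to
$\beta_n(\infty)$ remain within a uniform distance $2D = 2D(K,\delta)$
along a subarc of length at least $nL - O(1)$. Consequently the
Gromov product $(\rho(\infty) \mid \beta_n(\infty))_o$ grows at least
linearly in $n$, and so tends to infinity; this is equivalent to
$\beta_n(\infty) \to \rho(\infty)$ in the visual topology on $\partial
\HH$.

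The main obstacle is the route-matching in the first step; once the
correct initial branch of $\beta_n$ is identified and the passage at
$\rho(n)$ is seen to be legal, the rest of the argument is standard
coarse hyperbolic geometry.
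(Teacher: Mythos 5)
Your proof has a genuine gap at the route-matching step, and it is exactly the subtlety the paper's two-case argument is designed to handle.

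You correctly observe that $\rho$ uses the large half-branch $c$ and one small half-branch $a$ at the switch $\rho(n)$, and that $\beta_n$ must begin along the remaining small half-branch $b$. But you then assert ``Hence $\tilde\beta_n$ enters $\rho(n)$ on the large branch $c$ and exits on the small branch $b$,'' which silently assumes that $\rho|(-\infty, n]$ \emph{arrives} at $\rho(n)$ along $c$. That is true only when $c \subset \rho|(-\infty, n]$ --- what the paper calls a \emph{falling} branch. When $b$ is a \emph{rising} branch (the large half-branch $c$ lies in $\rho|[n,\infty)$), the backward ray $\rho|(-\infty,n]$ reaches $\rho(n)$ along the \emph{small} half-branch $a$, and the concatenation $\rho|(-\infty,n]\cdot\beta_n$ makes a small-to-small passage $a\to b$. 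That is an illegal turn: it creates a cusp at the switch, the curvature there is not small, and $\tilde\beta_n$ is neither a train-route nor automatically a quasi-geodesic. Your subsequent appeal to~\cite[Proposition~3.3.3]{Mosher03} and to Morse stability therefore does not apply, and the coarse-geometric conclusion does not follow in this case.

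The paper splits exactly along this dichotomy. For the falling subsequence it forms $P_n = \rho|(-\infty,n]\cup\beta_n$ (a legal route) and runs the quasi-geodesic stability argument you describe; your proof is essentially correct there. For the rising subsequence it instead forms $P_n = \beta_n\cup\rho|[n,\infty)$ (the legal concatenation in that case), notes that now $P_n(\infty)=x$ for all $n$ so stability tells us nothing about $\beta_n(\infty)=P_n(-\infty)$, and gives a different argument: the $P_n$ are pairwise non-crossing because complementary regions of $\tau^\HH$ have negative index, so either they leave every compact set (done) or they converge to a route $P$ with $P(\infty)=x$ that is disjoint from $\rho$, contradicting~\cite[Corollary~3.3.4]{Mosher03}. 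To repair your proof you would need to add this rising case (or an equivalent argument); as written it only establishes the claim for the subsequence of indices $n$ at which $\beta_n$ begins on a falling branch.
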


\begin{proof}
Let $x = \rho(\infty) \in \bdy_\infty \HH$.  Consider the subsequence
$\{ \beta_n \}$ where the first branch of each $\beta_n$ is falling.
Let $P_n = \rho|(-\infty, n] \cup \beta_n$, oriented away from
$\rho(-\infty)$.  Note that $P_n(\infty) = \beta_n(\infty)$.  Recall
that $\rho$ and $P_n$ are both uniformly close to
geodesics~\cite[pages 61--62]{PennerHarer92}.  Thus $P_n(\infty) \to
x$ as $n \to \infty$.

Now consider the subsequence $\{ \beta_n \}$ where the first branch of
each $\beta_n$ is rising.  Let $P_n = \beta_n \cup \rho|[n, \infty)$
oriented towards $x$; so $P_n(\infty) = x$ for all $n$.  Note that
$P(-\infty) = \beta_n(\infty)$.  Since all complementary regions of
$\tau^\HH$ have negative index none of the $P_n$ may cross each other.
It follows that either the $P_n$ exit compact subsets of $\HH$, and we
are done, or the $P_n$ converge~\cite[Theorem~1.5.4]{PennerHarer92} to
$P$, a train-route with $P(\infty) = x$.  Since $P$ does not cross any
$P_n$ deduce that $P$ and $\rho$ are disjoint.  But this
contradicts~\cite[Corollary 3.3.4]{Mosher03}: train-routes that share
an endpoint must share a half-route.
\end{proof}

Given distinct points $x, y, z \in S^1 = \bdy_\infty \HH$, arranged
counterclockwise, let $(y,z)$ be the component of $S^1 \setminus \{ y,
z \}$ that does not contain $x$. Let $[y,z]$ be the closure of $(y,
z)$.  Thus $x \in (z,y)$, $(y,z) \cap [z,y] = \emptyset$, and $(y,z)
\cup [z,y] = S^1$.

\begin{claim}
\label{Clm:Cofinal}
For any distinct $z, y \in S^1$ there is a train-route $\rho$ so that
one of the intervals $\bdy_\infty H^\pm(\rho)$ is contained in
$(z,y)$.
\end{claim}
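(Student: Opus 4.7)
The plan is to construct a train-route $P$ whose two ideal endpoints are both close to some chosen point $q \in (z,y)$, so that the short arc of $S^1$ joining them lies inside $(z,y)$; this short arc is then one of the two intervals $\bdy_\infty H^\pm(P)$. I would build $P$ by branching off an initial route, using \refclm{RisingFalling} and \refclm{Converge}.

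The first task is to find an initial train-route $\rho \carr \tau^\HH$ with $\rho(\infty)$ strictly inside $(z,y)$. For this I would establish density of the set $E \subset S^1$ of endpoints of train-routes in $\tau^\HH$. Since $\tau$ fills $S$ and the interior of $S$ carries a finite-area hyperbolic metric (as arranged at the start of \refsec{Exists}), the deck group $\pi_1(S)$ acts on $\HH$ as a Fuchsian group of finite covolume; it therefore has limit set all of $S^1$ and acts minimally on $S^1$. The set $E$ is $\pi_1(S)$-invariant and nonempty (every branch of $\tau^\HH$ extends to a bi-infinite train-route, with well-defined endpoints in $S^1$ since train-routes are uniform quasi-geodesics). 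Consequently $\overline{E} = S^1$, so I may pick $q = \rho(\infty)$ inside any prescribed sub-arc $J \subset (z,y)$ whose closure lies in $(z,y)$.

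With such a $\rho$ fixed, I would apply \refclm{Converge} exactly as in its proof, restricting to the rising-branch subsequence provided by \refclm{RisingFalling}. Let $\beta_n$ denote the side half-route at the switch $\rho(n)$, where $n$ ranges over the indices with rising side branches. At such a switch the third half-branch is small, pairing with the large half-branch $\rho|[n,\infty)$, so the concatenation $P_n = \beta_n \cup \rho|[n,\infty)$, oriented toward $q$, satisfies the switch conditions and is a bi-infinite train-route in $\tau^\HH$ with endpoints $\beta_n(\infty)$ and $q$. \refclm{Converge} then gives $\beta_n(\infty) \to q$. For all $n$ sufficiently large, both endpoints of $P_n$ lie inside $J$, so the short arc of $S^1$ between them is contained in $J \subset (z,y)$. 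This short arc is precisely one of the two intervals $\bdy_\infty H^\pm(P_n)$, completing the proof.

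The hard part will be the density step: producing an initial train-route whose forward endpoint lies inside the arbitrary open arc $(z,y)$. Once that is in hand, \refclm{Converge} already does the heavy lifting by forcing routes whose endpoints squeeze into any neighborhood of $q$, and the conclusion reduces to the elementary observation that two points of $S^1$ sufficiently close to an interior point of $(z,y)$ cobound a sub-arc of $(z,y)$.
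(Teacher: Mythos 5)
Your proposal is correct and follows essentially the same route as the paper: find an initial route $\rho$ with $\rho(\infty)\in(z,y)$ using density of route endpoints, then apply the rising case of \refclm{Converge} (via \refclm{RisingFalling}) to concatenate side half-routes with the tail of $\rho$, forcing both ideal endpoints of the resulting route into a small arc inside $(z,y)$. The only difference is that the paper simply asserts density of train-route endpoints, whereas you supply the (correct) justification via minimality of the lattice action of $\pi_1(S)$ on $S^1$.
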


\begin{proof}
The endpoints of train-routes are dense in $S^1 = \bdy_\infty \HH$.
Fix $x \in (z, y)$ so that $x$ is the endpoint of a train-route
$\gamma$.  Since there are infinitely many rising branches along
$\gamma$ (\refclm{RisingFalling}) the claim follows from the rising
case of \refclm{Converge}.
\end{proof}

Let $H_{x,y} \subset \HH$ be the convex hull of $(x, y) \subset
\bdy_\infty \HH$.  Let $\calH_{x,y}$ be the union of all open
half-planes $H(\rho)$ so that $\bdy_\infty H(\rho) \subset (x,y)$.
Since train-routes have geodesic curvature less than $\epsilon$ at
every point:

\begin{claim}
\label{Clm:NearlyHalfSpace}
The union $\calH_{x,y}$ is contained in an $\delta$--neighborhood of
$H_{x,y}$, where $\delta$ may be taken as small as desired by choosing
appropriate $\epsilon, L$.  \qed
\end{claim}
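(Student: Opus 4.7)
My plan is to fix an arbitrary half-plane $H(\rho) \subset \calH_{x,y}$, show it sits in a small neighborhood of $H_{x,y}$, and then take the union over all admissible $\rho$. The strategy is to compare the train-route $\rho$ bounding $H(\rho)$ with the hyperbolic geodesic joining its two endpoints at infinity, using that bounded geodesic curvature forces the two to stay uniformly close in $\HH^2$.

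First, I would observe that, since the open arc $\bdy_\infty H(\rho)$ is contained in $(x, y)$, its endpoints $\rho(\pm\infty)$ (which are well defined by \refclm{Converge}) both lie in the closed arc $[x, y]$. Let $g_\rho$ denote the hyperbolic geodesic with the same pair of ideal endpoints. By convexity of the hyperbolic hull we have $g_\rho \subset H_{x,y}$, and moreover the closed half-plane $H(g_\rho)$ whose ideal boundary coincides with $\bdy_\infty H(\rho)$ is itself contained in $H_{x,y}$. Next I would invoke the standard hyperbolic fact, already in the background of~\cite[Theorem~1.4.3]{PennerHarer92} and~\cite[pages~61--62]{PennerHarer92}, that a $C^1$ curve in $\HH^2$ with geodesic curvature everywhere less than $\epsilon$ lies within Hausdorff distance $\delta_0 = \delta_0(\epsilon, L)$ of the geodesic with the same ideal endpoints, and that $\delta_0 \to 0$ as $\epsilon \to 0$ and $L \to \infty$. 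Applied to $\rho$, this gives $\rho \subset N_{\delta_0}(g_\rho)$ and $g_\rho \subset N_{\delta_0}(\rho)$.

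Finally, I would conclude by a planar comparison: since $\rho$ and $g_\rho$ share ideal endpoints and fellow travel at distance $\delta_0$, the half-planes $H(\rho)$ and $H(g_\rho)$ agree outside an $\delta_0$--neighborhood of either bounding curve, and in particular $H(\rho) \subset N_{\delta_0}(H(g_\rho)) \subset N_{\delta_0}(H_{x,y})$. Taking the union over all train-routes $\rho$ with $\bdy_\infty H(\rho) \subset (x,y)$ and setting $\delta = \delta_0$ proves the claim. The one step demanding genuine care is the quantitative estimate $\delta_0(\epsilon, L) \to 0$: mere uniform hyperbolicity of train-routes holds independently of $\epsilon$, so the lower branch-length bound $L$ must be used to prevent tangent directions from swinging at switches, upgrading coarse fellow-travelling to genuine $C^1$--proximity to the associated geodesic. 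I expect this to be the main (though essentially well-known) technical obstacle, and the Penner--Harer references above already supply exactly the needed estimate.
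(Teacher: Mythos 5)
Your argument is correct and matches what the paper intends: the paper marks this claim $\qed$, treating it as immediate from the fact that train-routes have geodesic curvature less than $\epsilon$ at every point, and your write-up supplies exactly the standard comparison with the geodesic $g_\rho$ through the same ideal endpoints, via the Penner--Harer estimate that a bi-infinite $C^1$ curve with geodesic curvature $< \epsilon$ fellow-travels $g_\rho$ within a distance $\delta_0(\epsilon) \to 0$ as $\epsilon \to 0$.

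One small correction to your closing commentary. You say the branch-length bound $L$ is needed ``to prevent tangent directions from swinging at switches.'' But tangent directions do not swing at switches at all: by the definition of a pretrack, all branches incident to a switch $x$ have derivative $\pm v_x$ there, so a train-route is automatically $C^1$. Once \cite[Theorem~1.4.3]{PennerHarer92} supplies a metric in which every train-route has geodesic curvature $< \epsilon$ (wherever defined, with the curve globally $C^1$), the fellow-traveling constant $\delta_0$ is controlled by $\epsilon$ alone; $L$ plays no further role in that step. The parameter $L$ is used elsewhere in this part of the argument (for instance in establishing the $\epsilon'$--convexity of $\HH \setminus \calH_{x,y}$ in \refclm{Convex}), and it already appears inside the hypotheses of Penner--Harer's theorem when constructing the metric, but it is not what upgrades coarse fellow-travelling to $C^1$--proximity.
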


A set $X \subset \HH$ is {\em $\epsilon'$--convex} if every pair of
points in $X$ can be connected by a path in $X$ which has geodesic
curvature less than $\epsilon'$ at every point.

\begin{claim}
\label{Clm:Convex}
$\HH \setminus \calH_{x,y}$ is closed and $\epsilon'$--convex, where
$\epsilon'$ may be taken as small as desired by choosing appropriate
$\epsilon, L$.
\end{claim}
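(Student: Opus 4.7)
The first assertion is immediate: $\calH_{x,y}$ is a union of open half-planes, hence open, so its complement is closed.

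For $\epsilon'$-convexity, fix $p, q \in \HH \setminus \calH_{x,y}$ and let $\gamma_0$ denote the geodesic segment from $p$ to $q$. The plan is to modify $\gamma_0$ into a path $\gamma$ from $p$ to $q$ in $\HH \setminus \calH_{x,y}$ of geodesic curvature less than $\epsilon'$: where $\gamma_0$ strays into $\calH_{x,y}$, I will detour along the bounding train-routes. If $\gamma_0$ already avoids $\calH_{x,y}$ then $\gamma = \gamma_0$ works. Otherwise $\gamma_0 \cap \calH_{x,y}$ is a union of finitely many disjoint maximal open subarcs $(s_1, t_1), \dots, (s_k, t_k)$; by \refclm{NearlyHalfSpace} these all lie within distance $\delta$ of $H_{x,y}$.

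Call a train-route $\rho \carr \tau^\HH$ \emph{active} if $\bdy_\infty H(\rho) \subset (x,y)$ on the appropriate side, so that $H(\rho) \subset \calH_{x,y}$. The structural fact I will use is that two distinct train-routes in $\tau^\HH$ can cross only at a shared switch, and at any such switch they are forced to share a half-route: at a trivalent switch any train-route uses the large half-branch plus one small half-branch, so two distinct train-routes through the switch must agree on the large side. Consequently, the collection of active train-routes whose half-planes meet $\gamma_0$ has no transverse crossings and yields a well-defined outer envelope bounding $\calH_{x,y}$ on the side of $\gamma_0$. I replace each subarc $(s_i, t_i)$ by a concatenation of arcs of this envelope; each piece is a subarc of an active train-route and has geodesic curvature less than $\epsilon$ by the hypothesis that $\tau$ is transversely recurrent.

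The main obstacle is bounding the geodesic curvature at the junctions: where $\gamma_0$ first meets the envelope at some $s_i$, where the envelope transitions from one active train-route to the next at a shared switch, and where the envelope rejoins $\gamma_0$ at $t_i$. At each such junction two smooth arcs of geodesic curvature less than $\epsilon$ meet with a tangent angle that can be controlled by $\epsilon$ and $L$; I round the corner by a short circular arc of controlled geodesic curvature. The room needed to perform the rounding inside $\HH \setminus \calH_{x,y}$ is provided by \refclm{NearlyHalfSpace} together with the nested structure of the active half-planes, which ensures that no further active half-plane intrudes into the smoothing region on the outer side of the envelope. Taking $\epsilon$ sufficiently small and $L$ sufficiently large makes each train-route arc, each smoothing arc, and hence the total geodesic curvature of $\gamma$ at every point less than any preassigned $\epsilon'$.
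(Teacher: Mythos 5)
The paper's own ``proof'' is a one-line citation to pages 122--123 of Masur--Minsky, so you are attempting to reconstruct an argument that the authors chose not to spell out; the overall strategy you adopt (detour the geodesic along the frontier of $\calH_{x,y}$, using that train-routes do not cross transversally and have small geodesic curvature) is the right spirit. However, there is a genuine gap at the step where you assert that at a junction ``two smooth arcs of geodesic curvature less than $\epsilon$ meet with a tangent angle that can be controlled by $\epsilon$ and $L$.'' At a trivalent switch $s$ the boundary of $\calH_{x,y}$ can pass from one \emph{small} half-branch $a$ to the other small half-branch $b$: this happens whenever two active routes $\rho_1 = L\cup a$ and $\rho_2 = L\cup b$ share the large half-branch $L$ but have their half-planes $H(\rho_1)$, $H(\rho_2)$ on opposite sides of $L$, so that locally $\HH\setminus\calH_{x,y}$ is exactly the zero-angle wedge between $a$ and $b$. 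Since all three half-branches at $s$ are mutually tangent, the frontier there makes a $\pi$ turn --- a cusp --- and the angle is \emph{not} controlled by $\epsilon$ or $L$. Your ``rounding by a short circular arc of controlled curvature'' cannot work at such a point; there is no room on the outer side because the complementary region pinches to a cusp precisely at $s$.

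You would need an additional argument here: either show that the configuration above cannot occur given $\bdy_\infty H(\rho_i)\subset(x,y)$ (it can, whenever $x$ and $y$ both lie in the arc between $a(\infty)$ and $b(\infty)$ not containing $L(\infty)$), or show that the detour can be rerouted through the ``fat'' part of the complementary region bounded by $a$ and $b$, far from $s$, with curvature still bounded by $\epsilon'$; the latter is what the Masur--Minsky argument does carefully, using the negative index of the complementary regions to guarantee that the cusp opens up. A secondary, smaller issue: you assert that $\gamma_0\cap\calH_{x,y}$ is a \emph{finite} union of subarcs, but $\calH_{x,y}$ is a union of infinitely many half-planes, and finiteness of the components needs justification (e.g.\ from the lower bound $L$ on branch lengths together with \refclm{NearlyHalfSpace}).
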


\begin{proof}
This is proved in detail on pages 122-123 of~\cite{MasurMinsky99}.
\end{proof}

\begin{claim}
\label{Clm:Accumulation}
The point $x$ is an accumulation point of $\bdy (\HH \setminus
\calH_{x,y})$.
\end{claim}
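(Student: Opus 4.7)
The plan is to show $x \in \closure{\bdy \calH_{x,y}}$ (closure in $\HH \cup S^1$) by a local connectedness argument: for every Euclidean neighborhood $U$ of $x$ in the closed disk model, I will exhibit one point of $\calH_{x,y}$ and one point of $\HH \setminus \calH_{x,y}$ inside $U \cap \HH$. Since $\calH_{x,y}$ is open, $\HH \setminus \calH_{x,y}$ is closed, and $U \cap \HH$ is connected (for small enough $U$), any such proper open subset has nonempty topological boundary, and that boundary is contained in $\bdy \calH_{x,y} \cap U$.

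To produce $\calH_{x,y}$-points near $x$, I would feed shrinking sub-arcs of $(x, y)$ into \refclm{Cofinal}. Choose sequences $a_n, b_n \in (x, y)$ with $a_n \neq b_n$ and $a_n, b_n \to x$; in the notation of \refclm{Cofinal}, the arc $(a_n, b_n)$ is the sub-arc of $(x, y)$ joining them. The claim yields a train-route $\rho_n$ with one of $\bdy_\infty H^{\pm}(\rho_n) \subset (a_n, b_n) \subset (x, y)$, so $H(\rho_n) \subset \calH_{x,y}$ and both endpoints of $\rho_n$ lie in $[a_n, b_n]$. The same small-curvature input used for \refclm{NearlyHalfSpace} supplies a uniform $\delta$ such that every train-route fellow-travels, within hyperbolic distance $\delta$, the geodesic joining its two endpoints at infinity; applied to $\rho_n$ this says $\rho_n \subset N_\delta(\gamma_{a_n, b_n})$. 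As $a_n, b_n \to x$, the chord $\gamma_{a_n, b_n}$ and its $\delta$-neighborhood collapse to $x$ in the Euclidean disk model, so $\rho_n \subset U$ for $n$ large, and any interior point of $H(\rho_n)$ furnishes a point of $\calH_{x,y} \cap U$.

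To produce complementary points near $x$, I would use \refclm{NearlyHalfSpace} in the reverse direction. Let $C$ be the hypercycle at hyperbolic distance $2\delta$ from $\gamma_{x,y}$ on the side of $\gamma_{x,y}$ opposite $H_{x,y}$. Each point of $C$ has distance $2\delta > \delta$ from $H_{x,y}$, so by \refclm{NearlyHalfSpace} lies in $\HH \setminus \calH_{x,y}$; since $C$ has $x$ as one of its endpoints at infinity, it limits on $x$ in the closed disk model and therefore meets every neighborhood $U$ of $x$. The only delicate step is the uniformity of $\delta$ used in the second paragraph, i.e.\ that the fellow-traveling estimate behind \refclm{NearlyHalfSpace} applies to the shrinking endpoint pairs $(a_n, b_n)$ and not only to the fixed pair $(x, y)$; this follows because the bound comes solely from the curvature-$\epsilon$ property of train-routes and is independent of the chosen endpoints. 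Combined with the connectedness observation in the first paragraph, this proves the claim.
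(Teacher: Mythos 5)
Your proof is correct and follows essentially the same strategy as the paper's: use \refclm{Cofinal} to produce half-planes $H(\rho_n) \subset \calH_{x,y}$ contained in arbitrarily small Euclidean neighborhoods of $x$, use \refclm{NearlyHalfSpace} to produce points of $\HH \setminus \calH_{x,y}$ arbitrarily close to $x$, and then invoke a connectedness/crossing argument to find boundary points near $x$. The only cosmetic difference is that the paper joins the inside and outside regions by a specific geodesic $r_n$ perpendicular to $\bdy H_{x,y}$ and observes that $r_n$ must cross $\bdy(\HH \setminus \calH_{x,y})$, whereas you use a hypercycle to exhibit exterior points and then appeal to the connectedness of $U \cap \HH$ abstractly; both routes are sound and the inputs are identical.
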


\begin{proof}
Pick a sequence of subintervals $(x_n, y_n) \subset (x,y)$ so that
$x_n, y_n \to x$ as $n \to \infty$.  By \refclm{Cofinal} for every $n$
there is a route $\rho_n$ and a half-plane $H_n = H(\rho_n)$ so that
$\bdy_\infty H_n \subset (x_n, y_n)$.  It follows that $H_n \subset
\calH_{x,y}$.  Let $r_n$ be any bi-infinite geodesic perpendicular to
$\bdy H_{x,y}$ and meeting $H_n$.  Thus $r_n \to x$ as $n \to \infty$.
By \refclm{NearlyHalfSpace} the intersection $r_n \cap \bdy (\HH
\setminus \calH_{x,y})$ is nonempty, and we are done.
\end{proof}

The next lemma is not needed for the proof of \refthm{EffPos}: we
state it and give the proof in order to introduce necessary techniques
and terminology. 

\begin{lemma}
\label{Lem:Basis}
For any non-parabolic point $x \subset S^1$ there is a sequence of
train-routes $\{ \rho_n \}$ with associated half-planes $\{ H(\rho_n)
\}$ forming a neighborhood basis for $x$.
\end{lemma}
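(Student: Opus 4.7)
The plan is to construct a sequence of train-routes $\rho_n \carr \tau^\HH$ whose ideal endpoints bracket $x$ on $S^1$ and both converge to $x$. Once this is done, the ``thin'' half-plane $H_n$ of $\rho_n$---the one whose boundary-at-infinity arc is the short arc from $\rho_n(-\infty)$ through $x$ to $\rho_n(\infty)$---will contain $x$ in the interior of $\bdy_\infty H_n$, hence be a neighborhood of $x$ in $\HH \cup S^1$, and will shrink to $\{x\}$ in the compactification; together these facts give the required basis.

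To produce the $\rho_n$ I would exploit the $\pi_1(S)$-invariance of $\tau^\HH$. First fix a train-loop $\alpha \carr \tau$, which exists since $\tau$ is recurrent, and let $\tilde\alpha \subset \HH$ be a bi-infinite lift. The endpoints $p^\pm \in S^1$ of $\tilde\alpha$ are the fixed points of the hyperbolic deck transformation represented by $\alpha$. For each $g \in \pi_1(S)$ the translate $g \tilde\alpha$ is again a train-route, with endpoint pair $(gp^-,gp^+)$. Since $\xi(S) \ge 1$, the group $\pi_1(S)$ is non-elementary Fuchsian, so the orbit $\pi_1(S) \cdot (p^-, p^+)$ is dense in $(S^1 \times S^1) \setminus \Delta$. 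The non-parabolic hypothesis should let us select $g_n \in \pi_1(S)$ so that $g_n p^-$ converges to $x$ from one side and $g_n p^+$ converges to $x$ from the other; set $\rho_n \defeq g_n \tilde\alpha$.

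The final step invokes \refclm{NearlyHalfSpace} together with the fact that train-routes have geodesic curvature at most $\epsilon$ in the chosen finite-area metric: $\rho_n$ lies within uniform Hausdorff distance of the geodesic joining its ideal endpoints. As both endpoints of $\rho_n$ tend to $x$, that geodesic---and hence the thin half-plane $H_n$---is eventually contained in any prescribed neighborhood of $x$ in $\HH \cup S^1$. Combined with the fact from the first paragraph that each $H_n$ is itself a neighborhood of $x$, this shows $\{H_n\}$ is a neighborhood basis.

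The main obstacle is the bracketing in the second step: one must arrange that $g_n p^-$ and $g_n p^+$ approach $x$ from \emph{opposite} sides, rather than both collapsing to $x$ from the same direction. This is where non-parabolicity enters, since at a parabolic fixed point the orbit accumulates on $(x,x)$ only along horocyclic directions dictated by the cusp stabilizer, which may obstruct two-sided bracketing. An alternative, more hands-on approach would build $\rho_n$ directly from a base route and \refclm{Converge}, iterating along rising and falling branches as in the proof of \refclm{Accumulation}, but the orbit-density viewpoint above is conceptually the cleanest.
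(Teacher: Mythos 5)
Your proposal hinges on the claim that the orbit $\pi_1(S)\cdot(p^-,p^+)$ of the fixed-point pair of a hyperbolic element is dense in $(S^1\times S^1)\setminus\Delta$.  This is false.  The translates $g\,\tilde\alpha$ are precisely the lifts of the closed geodesic freely homotopic to $\alpha$, and this family of geodesics is \emph{locally finite} in $\HH$: only finitely many lifts meet any given compact set.  Consequently the set of endpoint pairs $\{(gp^-,gp^+)\}$ is a \emph{discrete} subset of $S^1\times S^1\setminus\Delta$ --- if $g_n(p^-,p^+)\to(a,b)$ with $a\ne b$ then the geodesics $g_n\tilde\alpha$ eventually all pass through a fixed compact piece of the geodesic from $a$ to $b$, forcing $g_n\tilde\alpha$ to be eventually constant.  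In particular the orbit accumulates \emph{only} on the diagonal, so while you can arrange $g_np^\pm\to x$, you cannot use density to force the two points to land in prescribed small arcs on opposite sides of $x$; the bracketing you correctly flag as ``the main obstacle'' is exactly where the argument breaks, and non-parabolicity does not rescue it.  (A concrete failure mode: if $\alpha$ is separating, a ray from a basepoint to $x$ may stay entirely in one complementary component and hence cross no lift of $\tilde\alpha$ at all, so no translate of $\tilde\alpha$ separates $x$ from a fixed interval.)

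There is a second, smaller mismatch with the paper's setup: the lemma asserts a neighborhood basis by \emph{train-routes}, not merely by translates of a fixed closed carried curve.  Even the true statement that the set of fixed-point pairs of \emph{all} hyperbolic elements of $\pi_1(S)$ is dense in $(S^1\times S^1)\setminus\Delta$ would not suffice, since one would further need those elements to be represented by curves carried by $\tau$.  The paper avoids these issues by constructing the desired route directly from the track: first it handles the case where $x$ is itself the endpoint of a route using \refclm{RisingFalling} and \refclm{Converge}; in general it builds the $\epsilon'$-convex region $\calK = \HH\setminus(\calH_{z,x}\cup\calH_{x,y})$, classifies the branches of $\tau^\HH$ in $\calK$ as bridges, interior, boundary, or exterior, and takes a small-curvature path $r\subset\calK$ from a basepoint to $x$; non-parabolicity of $x$ is used exactly to guarantee that $r$ recurs to the thick part and hence meets infinitely many branches, after which a case analysis (bridge vs.\ interior branches, with a limit argument via Theorem~1.5.4 of~\cite{PennerHarer92}) produces the separating route.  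This is the ``more hands-on approach'' you mention and then set aside, but it is not optional --- it is the proof.
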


\begin{proof}
Let $y, z$ be arbitrary points of $S^1$ so that $x, y, z$ are ordered
counterclockwise.  It suffices to construct a train-route separating
$x$ from $(y,z)$.

First assume that $x$ is the endpoint of a route $\rho$.
\refclm{RisingFalling} implies that there are infinitely many rising
branches $\{ a_m \}$ on one side of $\rho$ and infinitely many falling
branches $\{ c_n \}$ on the other side.  Run half-routes $\alpha_m$
and $\gamma_n$ through $a_m$ and $c_n$; so each half-route meets
$\rho$ in a single switch.  By \refclm{Converge} the endpoints
converge $\alpha_m(\infty), \gamma_n(\infty) \to x$.  Thus
sufficiently large $m, n$ give a train-route
\[
\alpha_m \cup \rho|_{[m, n]} \cup \gamma_n
\]
that separates $x$ from $(y,z)$, as desired. 

For the general case consider
\[
\calK = \HH \setminus (\calH_{z,x} \cup \calH_{x,y}).
\]
Note that $x$ is an accumulation point of $\calK$ (by
\refclm{Accumulation} and because $\calH_{z,x}$ cannot contain points
of $\bdy (\HH \setminus \calH_{x,y})$).
Fix any basepoint $w \in \calK$.  By \refclm{Convex} $\calK$ is
$\epsilon'$--convex.  Thus there is a path $r \subset \calK$ from $w$
to $x$ which has geodesic curvature less than $\epsilon'$ at every
point.  Since $x$ is not a parabolic point the projection of $r$ to
$S$ recurs to the thick part of $S$; thus $r$ meets infinitely many
branches $\{ b_n \}$ of $\tau^\HH$.

Suppose that $b$ is a branch of $\tau^\HH$ lying in $\calK$.  If
the two sides of $b$ meet $\calH_{z,x}$ and $\calH_{x,y}$ then $b$ is
a {\em bridge} of $\calK$.  If the sides of $b$ meet neither
$\calH_{z,x}$ nor $\calH_{x,y}$ then $b$ is an {\em interior branch}
of $\calK$.  If exactly one side of $b$ lies in $\calK$ then $b$ is a
{\em boundary branch}.  If both sides lie in $\calH_{z,x}$ (or both
sides lie in $\calH_{x,y}$) then $b$ is an {\em exterior branch}.
See \reffig{K} below. 

By convexity the path $r$ is disjoint from the exterior branches.
After a small isotopy the path $r$ is also disjoint from the boundary
branches, meets the interior branches transversely, and still has
geodesic curvature less than $\epsilon'$ at every point.

Now, if $r$ travels along a bridge then there are routes $\rho^\pm$
cutting off half-planes $H^\pm$ lying in $\calH_{z,x}$ and
$\calH_{x,y}$ respectively.  Then either $x$ is the endpoint of a
train-route or a cut and paste of $\rho^\pm$ gives the desired route
$\Gamma$ separating $x$ from $(y,z)$.  In either case we are done.

So suppose that $r$ only meets interior branches $\{ b_n \}$ of
$\calK$.  Let $\gamma_n$ be any train-route travelling along $b_n$.
If any of the $\gamma_n$ land at $x$ we are done, as above.  Supposing
not: Fixing orientations and passing to a subsequence we may assume
that $\gamma_n(\infty) \to x$ as $n \to \infty$.  There are now two
cases: suppose that for infinitely many $n$ we find that
$\gamma_n(-\infty) \in [y,z]$.  Then passing to a further subsequence
we have that $\gamma_n \to \Gamma$ where $\Gamma(\infty) =
x$~\cite[Theorem~1.5.4]{PennerHarer92}; thus $x$ is the endpoint of a
train-route and we are done as above.  The other possibility is that
for some sufficiently large $n$ both endpoints $\gamma_n(\pm\infty)$
lie in $(z, y)$.  Since $b_n$ is an interior branch $\gamma_n$
separates $x$ from $(y,z)$ and \reflem{Basis} is proved. 
\end{proof}

\subsection{Finding invariant efficient position}


Fix $\alpha \in \calC(S)$.  (The case where $\alpha \in \calA(S)$ is
dealt with at the end.)  Let $\alpha'$ be a component of the lift of
$\alpha$ to the universal cover $\HH$.  Let $\pi_1(\alpha)$ be the
cyclic subgroup (of the deck group) preserving $\alpha'$.  Let $\{ x,
y \} = \bdy_\infty \alpha' \subset S^1$.  We take
\[
\calK = \HH \setminus (\calH_{y,x} \cup \calH_{x,y}).
\]
By construction $\calK$ is $\pi_1(\alpha)$--invariant.  By
Claims~\ref{Clm:Convex} and~\ref{Clm:Accumulation} the set $\calK$ is
closed, $\epsilon'$--convex, and has $\{ x, y \} \subset \bdy_\infty
\calK$.  By \reflem{Basis} the only non-parabolic points of
$\bdy_\infty \calK$ are $x$ and $y$.  As in the proof of
\reflem{Basis} we find a bi-infinite path $r \subset \calK$ connecting
$y$ to $x$, with geodesic curvature less than $\epsilon'$ at every
point.  See \reffig{K}.

\begin{figure}[htbp]
\labellist
\small\hair 2pt
\pinlabel {$y$} [b] at 146 292.1
\pinlabel {$x$} [t] at 146 0
\pinlabel {$\calH_{x,y}$} [B] at 217.8 116.4
\pinlabel {$\calH_{y,x}$} [B] at 59.7 167.1
\endlabellist
$$\begin{array}{c}
\includegraphics[height=7cm]{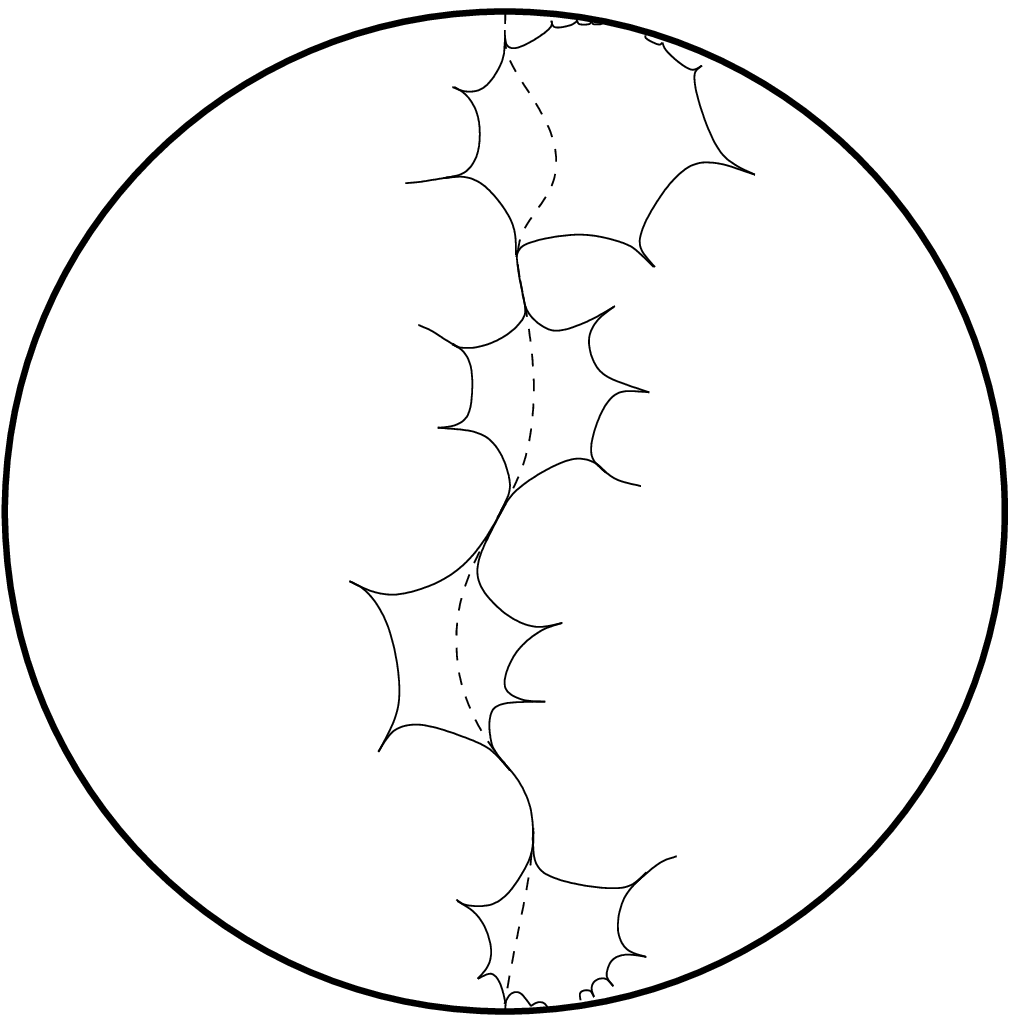} 
\end{array}$$
\caption{The path $r$ runs from $y$ to $x$.  To simplify the figure,
  no interior branches are shown.}
\label{Fig:K}
\end{figure}

Let $H(r)$ be the open half-plane to the right of $r$.  If we remove
the union
\[
\bigcup_{g \in \pi_1(\alpha)} g \cdot H(r)
\]
from $\HH$ then, as with \refclm{Convex}, what remains is closed and
$\epsilon''$--convex for some small $\epsilon''$.  It follows that we
may homotope the path $r$ to become a $\pi_1(\alpha)$--invariant
smooth path, contained in $\calK$ and transverse to the interior
branches, and avoiding the exterior branches of $\calK$.  A further
equivariant isotopy ensures that $r$ also avoids the boundary branches
of $\calK$.  Orient $r$ from $y$ to $x$.

\begin{remark}
\label{Rem:Disjoint}
Suppose that $\gamma \carr \tau^\HH$ is a train-route that separates
$x$ from $y$.  Note that if there exists a non-identity element $g \in
\pi_1(\alpha)$ so that $\gamma$ and $g \cdot \gamma$ meet then $r$ is
carried by $\tau^\HH$, thus $\alpha \carr \tau$, and we are done.  We
will henceforth assume that train-routes separating $x$ from $y$ are
disjoint from their non-trivial translates.
\end{remark}

Let $b$ be any interior branch of $\calK$ and let $\gamma$ be any
train-route travelling along $b$.  Since $b$ is interior, $\gamma$
must separate $x$ from $y$.  Orient $\gamma$ from $\calH_{y,x}$ to
$\calH_{x,y}$.  (Thus if $\gamma$ and $r$ meet once then the tangent
vectors to $r$ and $\gamma$, in that order, form a positive frame.)
The orientation of $\gamma$ gives an orientation to $b$.  Moreover, as
$b$ is an interior branch a cut and paste argument shows that the
orientation on $b$ is independent of our choice of $\gamma$.  Orient
all interior branches in this fashion and note that these orientations
agree at interior switches.

We say that $p \in r \cap b$ has {\em positive} or {\em negative sign}
as the tangent vectors to $r$ and $b$ (in that order) form a positive
or negative frame.  Suppose that there are $\Orbit \in \NN$ orbits of
points of negative sign, under the action of $\pi_1(\alpha)$.  We now
induct on $\Orbit$.

Suppose that $\Orbit$ is zero.  Any bigon between $r$ and a
train-route is contained in $\calK$ and so contributes one point of
positive and one point of negative sign.  So if there are no points of
negative sign then there are no bigons and $r$ is in efficient
position with respect to $\tau^\HH$.  Recall that $r$ is
$\pi_1(\alpha)$--invariant.  So $\beta \subset S$, the image of $r$
under the universal covering map, is an immersed curve in $S$
homotopic to $\alpha$.  If $\beta$ is embedded then we are done.  If
not then the Bigon Criterion for immersed curves~\cite{Thurston10}
implies that $\beta$ must have either a monogon or a bigon of
self-intersection.  If $\beta$ has a monogon $B$ of self-intersection
then, since index is additive, $\tau$ must be disjoint from $B$.  Thus
we can homotope $\beta$ to remove $B$ while fixing $\tau$ pointwise.
If $\beta$ has a bigon $B$ of self-intersection then, as in the proof
of uniqueness in \refsec{Unique}, we may remove $B$ via a sequence of
rectangle swaps.
After removing all monogons and bigons of self-intersection the curve
$\beta$ is embedded and in efficient position.

\begin{figure}[htbp]
\labellist
\small\hair 2pt
\pinlabel {$r$} [l] at 98.9 19.1
\pinlabel {$r$} [l] at 315.7 19.1
\pinlabel {$\gamma_L$} [l] at 148.1 183.2
\pinlabel {$\gamma_R$} [l] at 365.9 56.7
\endlabellist
$$\begin{array}{c}
\includegraphics[height=4cm]{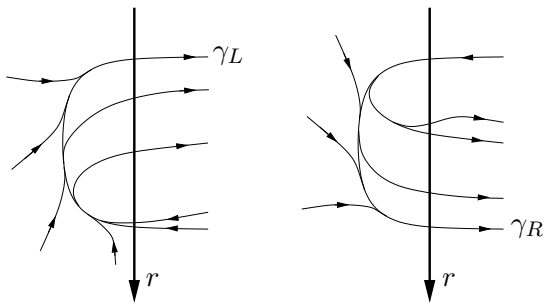} 
\end{array}$$
\caption{Left: The lowest point shown has negative sign.  The paths $r$
  and $\gamma_L$ form a bigon. Right: The corresponding figure for
  $\gamma_R$.} 
\label{Fig:Bigons}
\end{figure}

Suppose that $\Orbit$ is positive.  Let $b$ be a branch with a point
$p \in r \cap b$ of negative sign.  Let $\gamma_R$ ($\gamma_L$) be the
half-route starting at $p$, travelling in the direction of $b$, and
thereafter turning only right (left).  Each of $\gamma_R$ and
$\gamma_L$ must have at least one bigon with $r$, as their points at
infinity lie in $(x,y)$.  There are now two (essentially identical)
cases:
\begin{itemize}
\item
There is a bigon $B$ between $r$ and $\gamma_R$, to the right of $r$,
so that the corners of $B$ appear in the same order along $r$ and
$\gamma_R$.
\item
There is a bigon $B$ between $r$ and $\gamma_L$, to the right of $r$,
so that the corners of $B$ appear in opposite order along $r$ and
$\gamma_L$.
\end{itemize}
See \reffig{Bigons}.  If neither case holds then any half-route ending
at $p$ must originate in $(x,y)$, contradicting the fact that $p$ has
negative sign.
Note that, by \refrem{Disjoint}, $\gamma_R$ ($\gamma_L$) is disjoint
from its non-trivial $\pi_1(\alpha)$ translates.  It follows that the
bigon $B$ is also disjoint from its non-trivial translates.  Finally,
we may equivariantly isotope $r$ across $\pi_1(\alpha) \cdot B$.
Since the arc of $\gamma_R \cap \bdy B$ (respectively $\gamma_L \cap
\bdy B$) is combed outside of $B$ this isotopy reduces $\Orbit$ by at
least one. (Again, see \reffig{Bigons}.)  This completes the proof of
\refthm{EffPos} when $\alpha$ is a curve.

\subsection{Efficient position for arcs and multicurves}

Now suppose that $\alpha \subset S$ is an essential arc.  Let
$\alpha'$ be a lift of $\alpha$ to $\HH$, the universal cover of $S$.
Note that $\{ x, y \} = \bdy_\infty \alpha'$ is a pair of parabolic
points.  Construct $\calK = \HH \setminus (\calH_{y,x} \cup
\calH_{x,y})$ as before.  The proof now proceeds as above, omitting
any mention of $\pi_1(\alpha)$, equivariance, or annulus swaps. 

Finally suppose that $\Delta$ is a multicurve.  As shown in
\refsec{Exists} we may isotope, individually, every $\alpha \in
\Delta$ into efficient position.  By \refcor{EfficientIntersection}
all $\alpha \in \Delta$ may be realized disjointly in efficient
position.  This completes the proof of \refthm{EffPos}. \qed

\section{The structure theorem}
\label{Sec:Structure}

\subsection{Bounding diameter}

We now bound the diameters of the sets of wide arcs and curves carried
by the induced track.

\begin{lemma}
\label{Lem:AnnulusBound}
Suppose that $\tau$ is a birecurrent track and $X \subset S$ is an
essential annulus.  If $\tau|X \neq \emptyset$ then the diameter of
$V(\tau|X) \cup V^*(\tau|X)$ inside of $\calA(X)$ is at most eight.
\end{lemma}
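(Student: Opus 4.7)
The plan is to show $i(\gamma_1,\gamma_2) \le 7$ for every pair $\gamma_1,\gamma_2 \in V(\tau|X) \cup V^*(\tau|X)$; combined with the annular distance formula $d_X(\alpha,\beta) = 1 + i(\alpha,\beta)$ for distinct arcs in $\calA(X)$, this will yield the desired diameter bound of $8$. First I would isotope $\gamma_1$ and $\gamma_2$, each already in efficient position with respect to $\tau|X$, so that in addition they realize their geometric intersection number. This is done by an annular-cover analogue of \refcor{EfficientIntersection}: innermost bigons between $\gamma_1$ and $\gamma_2$ are removed via rectangle swaps in $S^X$, using the argument of \refsec{Unique}. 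By \reflem{WideInAnnuli}, each $\gamma_i$ then crosses every rectangle $R_b$ of $N = N(\tau|X)$ in at most one arc and meets every region of $S^X \setminus N$ in at most one arc.

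Next I would count intersections rectangle by rectangle and region by region. In a rectangle $R_b$ that meets both arcs: if both are carried, the two arcs are monotone transverse to ties and can be made disjoint in $R_b$ after further rectangle swaps; if both are dual, they appear as distinct parallel ties and are disjoint; if one is carried and one is dual, they meet transversely in exactly one point. In a complementary region $T$ that meets both arcs, wideness forces each $\gamma_i \cap T$ to be a single arc, so that $\gamma_1 \cap \gamma_2 \cap T$ is controlled by the local combinatorics of $T$; since efficient position forces $T$ to have nonpositive index with all corners outward, only a small number of crossings can occur, and those can be further reduced by surgery in the spirit of \reflem{VerticesWide} whenever they would create a bigon against the intersection-minimality established in the first step.

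The main obstacle is compiling these local counts into the explicit global bound of $7$. Although $\tau|X$ is noncompact in $S^X$, the arcs $\gamma_1, \gamma_2$ are compact in the compactification $X$, so the rectangles and regions they both visit lie in a compact corridor between the two boundary circles of $X$. I would then split into cases using \reflem{AnnulusEquality}, depending on whether the core $\alpha$ of $X$ is carried and/or dual to $\tau|X$ (which controls whether $V$ or $V^*$ coincides with all of $\calA$ or $\calA^*$), and argue along the lines of the surgery analysis in \reflem{VerticesWide} and \reflem{Surgery} that at most a constant number of intersections occur near each of the two ends of $X$, plus a bounded number in the interior. Summing these contributions will give the bound of $7$, and hence the desired diameter of at most $8$ in $\calA(X)$.
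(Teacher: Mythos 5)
Your high-level target ($i(\gamma_1,\gamma_2)\le 7$ for every pair in $V\cup V^*$) is equivalent to the statement via the annular distance formula, but it is not the way the paper proves the lemma, and the route you sketch has real gaps.  The paper's argument never tries to bound the intersection of two wide \emph{carried} arcs (or two wide \emph{dual} arcs) directly.  Instead it proves the single sharp estimate that $i(\beta,\gamma)\le 3$ whenever $\beta\in V$ and $\gamma\in V^*$, using an index count: if $i(\beta,\gamma)\ge 4$ then the $\ge 3$ compact complementary components of $\gamma\setminus\beta$ produce a square region $R$ with $\ind(R)=0$, wideness of $\gamma$ prevents any subregion of $R\setminus N(\tau|X)$ from touching two consecutive arcs of $\gamma$, and so a ``middle'' subregion becomes a compact complementary rectangle of $\tau^X$, contradicting negative index of complementary regions of $\tau$.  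The diameter bound of $8$ then follows from nonemptiness of $V$ and $V^*$ and the triangle inequality through a fixed element of the opposite set; the paper does \emph{not} assert an intersection bound of $3$ (or of $7$) between two carried arcs.

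The concrete gaps in your sketch are: (1) your claim that two wide carried arcs crossing a common rectangle $R_b$ ``can be made disjoint in $R_b$ after further rectangle swaps'' is false in general — if the two arcs enter and leave $R_b$ in opposite cyclic orders they must cross once inside $R_b$, and rectangle swaps (which move an arc across a bigon) do not change this; (2) the surgeries of Lemmas~\ref{Lem:VerticesWide} and \ref{Lem:Surgery} replace an arc with a \emph{different} arc or curve, so invoking them to ``reduce crossings'' against a fixed second arc while staying inside $V$ or $V^*$ needs justification you have not provided; and (3) even granting your local counts, you give no global argument bounding the number of rectangles and complementary regions that both arcs visit in $S^X$ — this is the entire content of the lemma, and the phrase ``a bounded number in the interior'' is exactly what needs to be proved.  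You would avoid all of these difficulties by doing what the paper does: restrict to one carried and one dual wide arc, where transversality in rectangles is automatic (one is along ties, one is transverse to ties), exploit wideness of the dual arc in the complementary regions, and run the index argument; then triangulate.
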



\begin{proof}
In the proof we use $V, V^*$ to represent $V(\tau|X)$ and
$V^*(\tau|X)$.  Since $\tau|X \neq \emptyset$ it follows that
$\calA(\tau|X)$ is nonempty.  The first conclusion of
\reflem{WideInAnnuli} now implies that $V$ is nonempty.

\begin{claim*}
$V^* \neq \emptyset$.
\end{claim*}

\begin{proof}
By \reflem{ManyDuals} there is a dual curve $\beta \in \calC^*(\tau)$
so that $i(\alpha, \beta) > 0$.  Thus there is a lift $\beta' \subset
S^X$ with closure an essential arc.  Since $\tau|X \subset \tau^X$ it
follows that $\beta' \in \calA^*(\tau|X)$.  The first conclusion of
\reflem{WideInAnnuli} now implies that $V^*$ is nonempty.
\end{proof}

\begin{claim*}
If $\beta \in V$ and $\gamma \in V^*$ then $i(\beta, \gamma) \leq 3$.
\end{claim*}

\begin{proof}
Suppose that $i(\beta, \gamma) = n \geq 4$. Let $\{ \gamma_i
\}_{i=1}^{n-1}$ be the components of $\gamma \setminus \beta$ with
both endpoints on $\beta$. Let $R_i$ be the components of $S^X
\setminus (\beta \cup \gamma)$ with compact closure.  We arrange
matters so that opposite sides of $R_i$ are on $\gamma_i$ and
$\gamma_{i+1}$.  Let $R$ be the union of the $R_i$.  Since $\ind(R) =
0$ every region $T$ of the closure of $R \setminus N(\tau|X)$ also has
index zero and so is a rectangle.  If $T$ meets both $\gamma_i$ and
$\gamma_{i+1}$ then $\gamma$ was not wide, a contradiction.  As $n - 1
\geq 3$ any region $T$ meeting $\gamma_2$ is a compact rectangle
component of the closure of $S^X \setminus N(\tau|X)$.  An index
argument implies that $\tau^X$ and thus $\tau \subset S$ has a
complementary region with non-negative index, a contradiction.
\end{proof}

\noindent
Since $V, V^*$ are nonempty it follows that $\diam(V \cup V^*) \leq
8$.
\end{proof}

Now suppose that $X$ is not an annulus.  Prompted by
\reflem{VerticesWide} we define
\[
W(\tau^X) = \left\{ \alpha \in \AC\big(\tau^X\big) \st
           \mbox{$\alpha$ is wide} \right\}.
\]
Define $W^*(\tau^X)$ similarly, replacing $\AC(\tau^X)$ by
$\AC^*(\tau^X)$.

\begin{lemma}
\label{Lem:GeneralBound}
There is a constant $\Wide = \Wide(S)$ with the following property.
Suppose that $\tau$ is a track and $X \subset S$ is an essential
subsurface (not an annulus) with $\pi_X(\tau) \neq \emptyset$.  Then
the diameter of $W(\tau^X) \cup W^*(\tau^X)$ inside of $\AC(X)$ is at
most $\Wide$.  Furthermore if, after isotoping $\bdy X$ into efficient
position, the induced orientation on $\bdy X$ is not wide then
either $\calC(\tau|X)$ or $\calC^*(\tau|X)$ has diameter at most two
in $\calC(X)$.
\end{lemma}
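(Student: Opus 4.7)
The plan is to bound pairwise intersections of elements of $W(\tau^X) \cup W^*(\tau^X)$ by a constant $C(S)$ via an index-counting argument, then convert this to the required $\AC(X)$-diameter bound; the ``furthermore'' statement will be handled by a separate surgery argument on the non-wide boundary.

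First I would verify that both $W(\tau^X)$ and $W^*(\tau^X)$ are nonempty. The hypothesis $\pi_X(\tau) \neq \emptyset$ produces a vertex cycle $v \in V(\tau)$ cutting $X$; applying \reflem{Surgery} to the essential components of $v^X$ in $\calA(\tau^X)$ then yields an element of $W(\tau^X)$. For the dual side, invoke \reflem{ManyDuals} to extract a dual curve $\beta \dual \tau$ cutting $X$ and apply the dual version of \reflem{Surgery} to arcs of $\beta^X$.

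Next, for fixed $\beta, \gamma \in W(\tau^X) \cup W^*(\tau^X)$, use \refthm{EffPos} together with \refcor{EfficientIntersection} to place both in efficient position with respect to $\tau|X$ realizing $i(\beta, \gamma)$. Wideness forces $|\beta \cap R_b|, |\gamma \cap R_b| \leq 2$ in every rectangle $R_b$ and $|\beta \cap T|, |\gamma \cap T| \leq 2$ in every complementary region $T$ of $\tau|X$. Since $\tau$ is an honest train track, the non-peripheral complementary regions of $\tau|X$ have strictly negative index, bounded in number by $4|\chi(S)|$. An index-counting argument generalizing the ``inner claim'' in the proof of \reflem{AnnulusBound} then bounds $i(\beta, \gamma)$ by a constant $C(S)$: excess crossings in the arrangement $N(\tau|X) \cup \beta \cup \gamma$ would either force a wideness violation in some rectangle or region, or produce a complementary region of $\tau|X$ with non-negative index, contradicting the train-track hypothesis. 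The standard estimate $d_{\AC(X)}(\alpha, \alpha') \leq 2 \log_2 i(\alpha, \alpha') + 2$ then yields $\Wide = O(\log C(S))$. For the furthermore claim, if $\bdy X$ in efficient position has non-wide induced orientation, then some rectangle $R_b$ or complementary region $T$ of $\tau|X$ contains two arcs of $\bdy X$ with opposite induced orientation; a surgery in the style of \reflem{VerticesWide} (along a tie-arc in $R_b$, or across $T$) produces carried (respectively dual) multicurves, and any further curve in $\calC(\tau|X)$ (resp.\ $\calC^*(\tau|X)$) admits a parallel surgery across the same rectangle or region to become disjoint from one of the surgery products, bounding $\diam_{\calC(X)} \leq 2$ on the appropriate set.

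The hardest step is the intersection bound when both $\beta$ and $\gamma$ are carried: wideness gives only local control ($\leq 4$ crossings per rectangle), but $\tau|X$ may have arbitrarily many branches, so the global intersection count is not immediate. Overcoming this requires leveraging that all non-peripheral complementary regions of $\tau|X$ inherit negative index from $\tau$, so that excess intersections in the arrangement would contradict either efficient position or the train-track structure, paralleling and generalizing the index argument from \reflem{AnnulusBound}.
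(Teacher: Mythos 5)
Your proposal diverges from the paper's argument in a way that leaves a genuine gap. The paper's diameter bound \emph{never} compares two carried elements or two dual elements directly: since $W$ and $W^*$ are both shown to be nonempty, the triangle inequality through any fixed dual witness covers carried--carried pairs automatically, so it suffices to bound $i(\alpha,\beta)$ for $\alpha \in W$, $\beta \in W^*$. You explicitly flag the carried--carried intersection bound as ``the hardest step'' and leave it open — but that step is unnecessary, and your index-counting sketch does not obviously close it. In particular, the combinatorial size of $N(\tau|X)$ is not a priori controlled by $S$ alone: $\tau|X$ need not be a bona fide train track, so the number of its branches and complementary regions is not bounded by a constant depending only on $\xi(S)$. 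What actually controls the combinatorics in the paper is the case split on whether $\bdy X$ (in efficient position) is wide. When $\bdy X$ is wide, $N(\tau) \cap X$ decomposes into at most $2|\calB(\tau)|$ subrectangles, and a carried--dual pair meets each in at most $4$ points, giving $i(\alpha,\beta) \leq 8|\calB(\tau)|$ — a bound in terms of $|\calB(\tau)|$, not any quantity attached to $\tau|X$. Your sketch has no mechanism to bound the number of pieces of $N(\tau) \cap X$ before the wideness dichotomy is invoked.

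For the ``furthermore'' clause the paper also argues differently, and more simply. Rather than surgering $\bdy X$ and then claiming parallel surgeries for every carried (or dual) curve, the paper observes that non-wideness of $\bdy X$ produces a single essential arc $\gamma \subset X$ that is either disjoint from $N(\tau)$ (when wideness fails in a complementary region) or a subarc of a tie (when it fails in a rectangle). Extending this arc to $\gamma'' \subset S^X$ gives $i(\gamma'',\alpha)=0$ for all carried $\alpha$, or $i(\gamma'',\beta)=0$ for all dual $\beta$, and this one witness gives the diameter bound of $2$. Your version — making each further curve disjoint from one of two surgery products — does not give a uniform center and hence does not obviously bound pairwise distances. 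On the nonemptiness of $W$ and $W^*$ your route through \reflem{Surgery} is workable but indirect; the paper gets $W \neq \emptyset$ immediately from \reflem{VerticesWide} (a wide curve in $S$ lifts to a wide element of $\AC(\tau^X)$), and $W^* \neq \emptyset$ from \reflem{ManyDuals} together with \reflem{Decompose}.
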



\begin{proof}
In the proof we use $W, W^*, \AC, \AC^*$ to represent $W(\tau^X)$ and
so on.  Since $\pi_X(\tau) \neq \emptyset$ there is some vertex cycle
$\alpha \in V(\tau)$ so that $\alpha$ cuts $X$.  Since $\alpha$ is
wide (\reflem{VerticesWide}) there is a lift $\alpha' \carr \tau^X$
which is also wide; deduce that $W$ is nonempty.


\begin{claim*}
$W^* \neq \emptyset$.
\end{claim*}

\begin{proof}
By \reflem{ManyDuals} there is a dual curve $\alpha \in \calC^*(\tau)$
cutting $X$.  By \reflem{Decompose} there is a wide dual $\beta$ that
also cuts $X$.  Thus there is a lift $\beta' \subset S^X$ with closure
an essential wide arc or wide essential, non-peripheral curve.  So
$\beta' \in W^*$ as desired.
\end{proof}

Now isotope $\bdy X$ into efficient position.  Let $X'$ be the compact
component of the preimage of $X$ under the covering map $S^X \to S$.
Note that $\bdy X'$ is in efficient position with respect to $\tau^X$.
Note that the covering map $S^X \to S$ induces a homeomorphism between
$X'$ and $X$.  Let $N^X = N(\tau^X) \subset S^X$ be the preimage of $N
= N(\tau)$.  Let $N' = X' \cap N^X$.  Again, the covering map induces
a homeomorphism between $N'$ and $N \cap X$.

Suppose that $\bdy X$, with its induced orientation, is not wide.  If
$\bdy X$ fails to be wide in $S \setminus N(\tau)$ then there is a
properly embedded, essential arc $\gamma \subset X$ disjoint from
$N(\tau)$.  Lift $\gamma$ to $\gamma' \subset X'$.  Adjoin to
$\gamma'$ geodesic rays in $S^X \setminus X'$ to obtain an essential,
properly embedded arc $\gamma'' \subset S^X$.  Note that $i(\gamma'',
\alpha) = 0$ for every $\alpha \in \AC$; only intersections in $X'$
contribute to geometric intersection number as computed in $S^X$.
This implies that $\diam_X(\calC(\tau|X)) \leq 2$.  Furthermore,
$i(\gamma'', \beta) \leq 2$ for every $\beta \in W^*$.  This gives the
desired diameter bound for $W \cup W^*$.

If, instead, $\bdy X$ fails to be wide in $N(\tau)$ then there is a
properly embedded, essential arc $\gamma \subset X$ that is a subarc
of a tie.  Again, lift and extend to an essential arc $\gamma''
\subset S^X$ so that $i(\gamma'', \beta) = 0$ for any $\beta \in
\AC^*$.  This implies that $\diam_X(\calC^*(\tau|X)) \leq 2$.  We also
have $i(\gamma'', \alpha) \leq 2$ for any $\alpha \in W$. Again the
diameter is bounded.

Now suppose that $\bdy X$ is wide.  Thus, for every $b \in
\calB(\tau)$, the rectangle $R_b$ meets $\bdy X$ in at most a pair of
arcs. It follows that $N \cap X$, and thus $N'$, is a union of at most
$2|\calB(\tau)|$ subrectangles of the form $R', R'' \subset R_b$.
Suppose that $\alpha \in W$ and $\beta \in W^*$.  Then $\alpha$ and
$\beta$ each meet a subrectangle $R'$ in at most two arcs.  Thus
$\alpha$ and $\beta$ intersect in at most four points inside of $R'$.
Thus $i(\alpha, \beta) \leq 8 |\calB(\tau)|$ and \reflem{GeneralBound}
is proved.
\end{proof}

\subsection{Accessible intervals}
\label{Sec:Access}

Suppose that $\{ \tau_i \}_{i=0}^N$ is a sliding and splitting
sequence of birecurrent train tracks.  Suppose $X \subset S$ is an
essential subsurface, yet not an annulus, with $\xi(X) < \xi(S)$.
Define
\begin{gather*}
m_X = 
   \min \big\{ i \in [0,N] \st \diam_X(\calC^*(\tau_i|X)) \geq 3 \big\} \\
\tag*{\text{and}}
n_X = 
   \max \big\{ i \in [0,N] \st \diam_X(\calC(\tau_i|X)) \geq 3 \big\}.
\end{gather*}
If either $m_X$ or $n_X$ is undefined or if $n_X < m_X$ then $I_X$,
the {\em accessible interval} is empty. Otherwise, $I_X = [m_X, n_X]$.

If $X \subset S$ is an annulus, then $I_X$ is defined by replacing
$\calC$ by $\calA$ and increasing the lower bound on diameter from 3
to 9.  We may now state the structure theorem:


\begin{theorem}
\label{Thm:FellowTravel}
For any surface $S$ with $\xi(S) \geq 1$ there is a constant $\Drift =
\Drift(S)$ with the following property: Suppose that $\{ \tau_i
\}_{i=0}^N$ is a sliding and splitting sequence of birecurrent train
tracks in $S$ and suppose that $X \subset S$ is an essential
subsurface.
\begin{itemize}
\item
For every $[a,b] \subset [0,N]$ if $[a,b] \cap I_X = \emptyset$ and
$\pi_X(\tau_b) \neq \emptyset$ then $d_X(\tau_a, \tau_b) \leq \Drift$.
\end{itemize}
Suppose $i \in I_X$.  If $X$ is an annulus:
\begin{itemize}
\item
The core curve $\alpha$ is carried by and wide in $\tau_i$.
\item
Both sides of $\alpha$ are combed in the induced track $\tau_i|X$.
\item
If $i+1 \in I_X$ then $\tau_{i+1}|X$ is obtained by taking subtracks,
slides, or at most a pair of splittings of $\tau_i|X$.
\end{itemize}
If $X$ is not an annulus:
\begin{itemize}
\item
When in efficient position
$\bdy X$ is wide with respect to $\tau_i$.
\item
The track $\tau_i|X$ is birecurrent and fills $X$.
\item
If $i+1 \in I_X$ then $\tau_{i+1}|X$ is either a subtrack, a slide, or
a split of $\tau_i|X$.
\end{itemize}
\end{theorem}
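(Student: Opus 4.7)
My approach will be to leverage two main ingredients: the monotonicity of carried and dual sets along splitting sequences, and the diameter bounds for wide curves (\reflem{AnnulusBound}, \reflem{GeneralBound}). Because $\tau_{i+1}$ is carried by $\tau_i$ under a slide or split, we have $\calC(\tau_{i+1}|X) \subseteq \calC(\tau_i|X)$ and $\calC^*(\tau_i|X) \subseteq \calC^*(\tau_{i+1}|X)$; thus $\diam_X(\calC(\tau_i|X))$ is non-increasing while $\diam_X(\calC^*(\tau_i|X))$ is non-decreasing. This legitimizes the accessible interval $I_X = [m_X, n_X]$ as a single interval, and at every $i \notin I_X$ at least one of the carried or dual diameters is below the defining threshold.

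To prove the drift bound I would fix $[a,b]$ with $[a,b] \cap I_X = \emptyset$ and split into three cases ($b < m_X$, $a > n_X$, $I_X = \emptyset$). In each case, monotonicity implies that at every $i \in [a,b]$ either $\calC(\tau_i|X)$ or $\calC^*(\tau_i|X)$ has small diameter in $\calC(X)$ (or $\calA(X)$ for annuli). Since vertex cycles are wide (\reflem{VerticesWide}), $\pi_X(\tau_i)$ lifts into $W(\tau_i^X)$, which is within $\Wide$ of $W^*(\tau_i^X)$ by \reflem{GeneralBound}. Via \reflem{Surgery}, any carried arc appearing in $\pi_X$ can be traded for a nearby carried curve at the cost of a small additive constant. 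Combining these bounds with a common anchor curve furnished by the monotone chain of dual (or carried) sets yields the uniform bound $\Drift$.

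For $i \in I_X$ with $X$ not an annulus, both $\calC(\tau_i|X)$ and $\calC^*(\tau_i|X)$ have diameter at least three. The contrapositive of the furthermore clause of \reflem{GeneralBound} then forces $\bdy X$ in efficient position to be wide with respect to $\tau_i$. Recurrence of $\tau_i|X$ is built into the definition of the induced track, and transverse recurrence with respect to arcs and curves passes through \reflem{TransverseRecurrenceAC} because the large dual set supplies duals meeting every branch. Filling of $X$ follows from an index argument: carried curves of diameter at least three must fill, and \reflem{Compact} places the induced track inside a compact core of $S^X$ homeomorphic to $X$. For the transition: the slide or split of $\tau_i$ occurs at a single branch $b$; wideness of $\bdy X$ means $R_b \cap X$ contributes at most one copy of $b$ to $\tau_i|X$, so the elementary move either falls outside $\tau_i|X$ (giving a subtrack), is a slide of $\tau_i|X$, or is a split of $\tau_i|X$.

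For $i \in I_X$ with $X$ an annulus, the large dual diameter produces a wide dual arc spanning the annulus, and the large carried diameter produces carried arcs arranged around the core. I plan to use \reflem{Combed} and the combing analysis from \reflem{WideInAnnuli} to conclude that both sides of the core curve $\alpha$ are combed in $\tau_i|X$ in the same direction, which by \reflem{Combed} makes $\alpha$ carried by $\tau_i$ (and wide by \reflem{VerticesWide}). The transition bound of at most two splittings arises because wideness of $\alpha$ implies each rectangle $R_b \subset N(\tau_i)$ meets $\alpha$ in at most two arcs; consequently, $\tau_i|X$ contains at most two lifts of any given branch of $\tau_i$, so a single split of $\tau_i$ induces at most two splits of $\tau_i|X$. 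The main obstacle is the annulus case: extracting a carried, wide representative of the core from the diameter hypotheses requires tightly interleaving the combing, wideness, and surgery arguments. The drift bound is conceptually clean once the diameter lemmas are invoked, but some care is needed in handling the arcs that appear in $\pi_X$ alongside curves.
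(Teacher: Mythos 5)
Your overall strategy mirrors the paper's: use the monotonicity $\calC(\tau_{i+1}|X) \subset \calC(\tau_i|X)$ and $\calC^*(\tau_i|X) \subset \calC^*(\tau_{i+1}|X)$ to organize the accessible interval, invoke the diameter lemmas (\reflem{AnnulusBound}, \reflem{GeneralBound}, \reflem{Surgery}) for the drift bound, use the \reflem{GeneralBound} ``furthermore'' clause contrapositively for wideness of $\bdy X$, and use combing for the annulus case. That is the paper's route. However, a few steps in your sketch do not go through as stated and would need to be repaired.

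First, in the annulus case you claim $\alpha$ is wide ``by \reflem{VerticesWide}.'' That lemma is about vertex cycles of $\tau$, and the core curve of $X$ need not be one. The paper instead deduces wideness from the combing: at least one side of $\alpha$ is combed (via \reflem{AnnulusEquality} and \reflem{Combed}), and if $\alpha$ were not wide in $\tau_i$, some rectangle $R_b$ would contain two arcs of $\alpha$ with opposite orientations, from which one reads off that \emph{neither} side of $\alpha$ is combed in $\tau_i^X$, a contradiction. Relatedly, the large diameters at $i \in I_X$ do not directly ``produce a wide dual arc''; rather they force the inclusions $V(\tau_i|X) \subset \calA(\tau_i|X)$ and $V^*(\tau_i|X) \subset \calA^*(\tau_i|X)$ to be strict (otherwise \reflem{AnnulusBound} would bound the diameter by 8), whence \reflem{AnnulusEquality} shows $\alpha$ is both carried and dual — this is the engine that starts the combing argument.

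Second, the transverse recurrence claim is misattributed: it does not follow from the dual set having large diameter at index $i$. It follows from the standing hypothesis that $\tau$ is transversely recurrent in $S$, via lifting a dual curve of $\tau$ meeting the image branch $b$ to a dual arc or curve of $\tau^X$ meeting $b'$; then \reflem{TransverseRecurrenceAC} upgrades ``t.r.\ with respect to arcs and curves'' to ordinary transverse recurrence of the relevant subtrack. Third, your drift bound correctly observes that outside $I_X$ one of the two diameters is small, but in the empty-$I_X$ case where $[a,b]$ straddles both $n_X$ and $m_X$, the small set switches from dual to carried somewhere in the middle; the paper handles this by splitting at an intermediate index $c$ (and separately treating $m_X = n_X + 1$) and applying the triangle inequality. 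Without that split, there is no single anchor curve serving the whole of $[a,b]$.
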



\begin{proof}
Fix an interval $[a,b] \subset [0,N]$.  Note that $\tau_b \carr
\tau_a$ and so $\tau_b^X \carr \tau_a^X$.  Thus $\AC(\tau_b^X) \subset
\AC(\tau_a^X)$ while $\AC^*(\tau_a^X) \subset \AC^*(\tau_b^X)$.

\begin{claim*}
If $[a,b] \cap I_X = \emptyset$ and $\pi_X(\tau_b) \neq \emptyset$
then $d_X(\tau_a, \tau_b) \leq \Drift$.
\end{claim*}

\begin{proof}
Fix, for the duration of the claim, a vertex cycle $\beta \in
V(\tau_b)$ so that $\beta$ cuts $X$.  Since $\beta$ is also carried by
$\tau_a$ there is, by \reflem{Decompose}, a vertex cycle $\alpha \in
V(\tau_a)$ cutting $X$.  Pick $\alpha' \in \pi_X(\alpha)$ and $\beta'
\in \pi_X(\beta)$.  Note \reflem{VerticesWide} implies that
$\alpha'$ is wide in $\tau_a^X$ while $\beta'$ is wide in $\tau_b^X$.
The proof divides into cases depending on the relative positions of
$a, b, m_X$ and $n_X$.

\begin{case}
Suppose $n_X < a$ or $n_X$ is undefined. 
\end{case}

Note that $\beta' \carr \tau_a^X$.  If $X$ is an annulus then since $a
\nin I_X$ the diameter of $\calA(\tau_a|X)$ is at most eight; thus
$d_X(\alpha, \beta) \leq 8$ and we are done.

Suppose that $X$ is not an annulus.  If $\beta'$ is an arc then
\reflem{Surgery} gives two cases: we may replace $\beta'$ by $\gamma$
which is either a wide arc in $\tau_a^X$ or is an essential
non-peripheral curve in $\tau_a|X$.  (If $\beta'$ is a curve then let
$\gamma = \beta'$.)  In either case \reflem{Surgery} ensures that
$i(\gamma, \beta') \leq 2$ and so $d_X(\gamma, \beta') \leq 4$.  If
$\gamma$ is an arc then both $\alpha'$ and $\gamma$ are wide so
\reflem{GeneralBound} gives $d_X(\alpha, \beta) \leq \Wide + 4$.  If
$\gamma$ is a curve pick any $\delta \in V(\tau_a|X)$.  Then
\reflem{GeneralBound} implies that $d_X(\alpha', \delta) \leq \Wide$.
Also, $a \nin I_X$ implies that $d_X(\delta, \gamma) \leq 2$.  Thus
$d_X(\alpha, \beta) \leq \Wide + 6$.

\begin{case}
Suppose $b < m_X$ or $m_X$ is undefined. 
\end{case}

If $X$ is an annulus, then \reflem{AnnulusBound} gives wide duals
$\alpha^* \in V^*(\tau_a|X)$ and $\beta^* \in V^*(\tau_b|X)$ so that
$d_X(\alpha', \alpha^*), d_X(\beta', \beta^*) \leq 8$.  It follows
that the arc $\alpha^*$ also lies in $\calA^*(\tau_b|X)$.  Since $b
\nin I_X$ we have $d_X(\alpha^*, \beta^*) \leq 8$.  Thus $d_X(\alpha,
\beta) \leq 24$, as desired.

If $X$ is not an annulus, then by \reflem{GeneralBound} there is a
wide dual $\alpha^* \in W^*(\tau_a^X)$ so that $d_X(\alpha', \alpha^*)
\leq \Wide$.  Again, $\alpha^*$ is also an element of
$\AC^*(\tau_b^X)$ but may not be wide there.  If $\alpha^*$ is an arc
then \reflem{Surgery} gives two cases: we may replace $\alpha^*$ by
$\gamma^*$ which is either a wide dual arc to $\tau_b^X$ or is an
essential non-peripheral dual curve to $\tau_b^X$.  (If $\alpha^*$ is
a curve then let $\gamma^* = \alpha^*$.)  So $i(\alpha^*, \gamma^*)
\leq 2$ and thus $d_X(\alpha^*, \gamma^*) \leq 4$.  If $\gamma^*$ is a
wide dual arc then \reflem{GeneralBound} implies that $d_X(\gamma^*,
\beta') \leq \Wide$ and so $d_X(\alpha, \beta) \leq 2\Wide + 4$.  If
$\gamma^*$ is a dual curve then, as $b \nin I_X$, any dual wide curve
$\delta^* \in V^*(\tau_b|X)$ has $d_X(\gamma^*, \delta^*) \leq 2$.
Again, \reflem{GeneralBound} implies that $d_X(\delta^*, \beta') \leq
\Wide$ and so $d_X(\alpha, \beta) \leq 2\Wide + 6$.

\begin{case}
Suppose $a \leq n_X < c < m_X \leq b$.
\end{case}

The first two cases bound $d_X(\tau_c, \tau_b)$ and $d_X(\tau_a,
\tau_c)$; thus we are done by the triangle inequality.

\begin{case}
Suppose $a \leq n_X < m_X \leq b$ and $m_X = n_X + 1$.
\end{case}

Let $c = n_X$ and $d = m_X$.  The first two cases bound $d_X(\tau_d,
\tau_b)$ and $d_X(\tau_a, \tau_c)$.  Since $V(\tau_c)$ and $V(\tau_d)$
have bounded intersection $d_X(\tau_c, \tau_d)$ is also bounded and
the claim is proved. 
\end{proof}

Now fix $i \in I_X$. 

\begin{claim*}
If $X$ is an annulus:
\begin{itemize}
\item
The core curve $\alpha$ is carried by and is wide in $\tau_i$.
\item
Both sides of $\alpha$ are combed in the induced track $\tau_i|X$.
\item
If $i+1 \in I_X$ then $\tau_{i+1}|X$ is obtained by taking subtracks,
slides, or at most a pair of splittings of $\tau_i|X$.
\end{itemize}
\end{claim*}

\begin{proof}
Since $i \in I_X$, both $\calA(\tau_i|X)$ and $\calA^*(\tau_i|X)$ have
diameter at least nine.  From \reflem{AnnulusBound} deduce that the
inclusions $V \subset \calA$ and $V^* \subset \calA^*$ are strict.
Thus by \reflem{AnnulusEquality} the core curve $\alpha$ is both
carried by and dual to $\tau_i|X$.  The second statement now follows
from \reflem{Combed}.  Thus at least one side of $\alpha$ is combed in
$\tau_i^X$.  Projecting from $S^X$ to $S$ we find that $\alpha \carr
\tau_i$.  If $\alpha$ is not wide in $\tau_i$ then we deduce that
neither side of $\alpha$ is combed in $\tau^X$, a contradiction.

Suppose that $\tau_i$ slides to $\tau_{i+1}$.  Then, up to isotopy,
$\tau_{i+1}$ slides to $\tau_i$.  Since slides do not kill essential
arcs it follows that $\tau_{i+1}|X$ is obtained from $\tau_i|X$ by an
at most countable collection of slides. 

Now suppose that $\tau_{i+1}$ is obtained by splitting $\tau_i$ along
a large branch $b$.  Thus $\tau_{i+1}^X$ is obtained from $\tau_i^X$
by splitting all of the countably many lifts of $b$.  Every essential
arc carried by $\tau_{i+1}|X$ is also carried by $\tau_i^X$.  Let
$\tau' \subset \tau_i^X$ be the union of these essential routes.  It
follows that $\tau_{i+1}|X$ is obtained from $\tau'$ by splitting
along lifts of $b$ that are also large branches of $\tau'$.  Since
both sides of $\alpha$ are combed in $\tau_i|X$ the same is true in
$\tau'$ and so any component of $\tau' \setminus \alpha$ is a tree
without large branches.  The track $\tau'$ therefore has only finitely
many large branches, all contained in $\alpha$.  Since $\alpha$ is
wide in $\tau_i$ there are at most two preimages of the large branch
$b$ contained in $\alpha \subset S^X$.  Thus $\tau_{i+1}|X$ is
obtained from $\tau'$ by at most two splittings.  This proves the
claim.  (See \reffig{Meet} for pictures of how $\alpha$ may be carried
by $\tau_i$ and how splitting $b$ effects $\tau_i|X$.)
\end{proof}

\begin{figure}[htbp]
$$\begin{array}{cc}
\includegraphics[height=3cm]{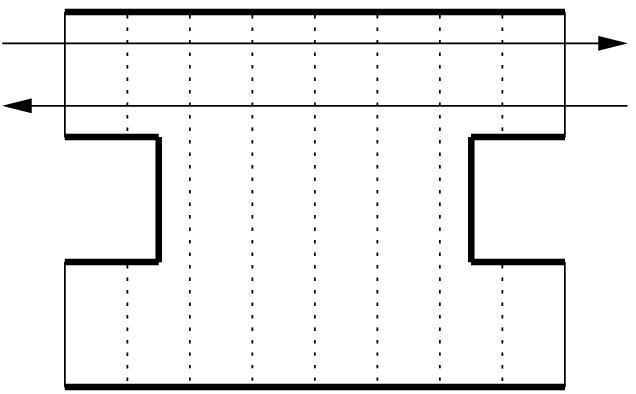} &
\includegraphics[height=3cm]{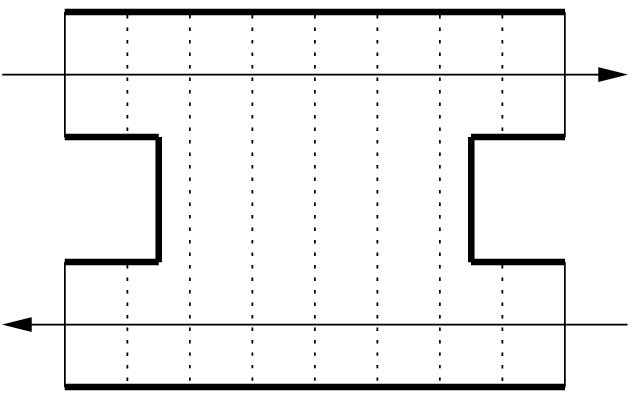} \\
\\
\includegraphics[height=3cm]{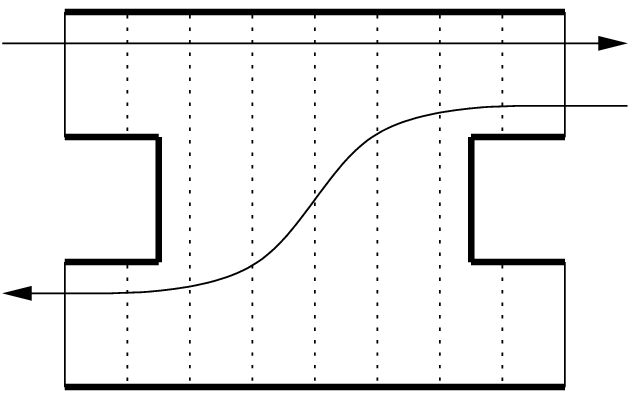} &
\includegraphics[height=3cm]{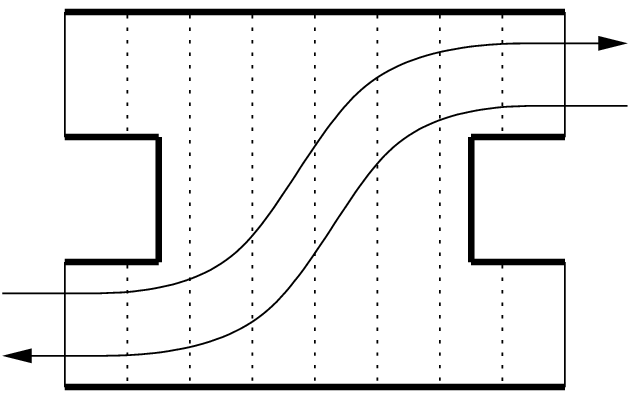} 
\end{array}$$
\caption{Four of the possible ways for an oriented, carried, wide
  curve $\alpha$ to meet a large rectangle $R_b$ of $N(\tau)$.  Note
  that when $X$ is an annulus and $\alpha$ is the core curve the upper
  left picture implies that neither side of $\alpha$ is combed in
  $\tau|X$.  Splitting the upper right deletes zero or two components
  of $\tau_i|X \setminus \alpha$.  In the bottom row, only the left
  splitting is possible when $i, i+1 \in I_X$.  On the bottom left one
  component of $\tau_i|X \setminus \alpha$ is deleted and $\tau_i|X$
  is split once.  On the bottom right $\tau_i|X$ is split twice.}
\label{Fig:Meet}
\end{figure}

\begin{claim*}
Suppose $X$ is not an annulus. 
\begin{itemize}
\item
When in efficient position
$\bdy X$ is wide with respect to $\tau_i$.
\item
The track $\tau_i|X$ is birecurrent and fills $X$.
\item
If $i+1 \in I_X$ then $\tau_{i+1}|X$ is either a subtrack, a slide, or
a split of $\tau_i|X$.
\end{itemize}
\end{claim*}

\begin{proof}
Since $i \in I_X$ the second conclusion of \reflem{GeneralBound}
implies that $\bdy X$ is wide.  The induced track $\tau_i|X$ carries a
pair of curves at distance at least three, so fills $X$.  Also,
$\tau_i|X$ is recurrent by definition.  For any branch $b' \in
\tau_i|X \subset S^X$, let $b \subset \tau$ be the image in $S$.
Since $\tau$ is transversely recurrent there is a dual curve $\beta$
meeting $b$.  Lifting $\beta$ to a curve or arc $\beta' \subset S^X$
gives a dual to $\tau|X$ meeting $b'$.  Thus $\tau|X$ is transversely
recurrent with respect to arcs and curves, as defined in
\refsec{InducedForNonAnnuli}

Now, if $\tau_i$ slides to $\tau_{i+1}$ then, as in the annulus case,
$\tau_i|X$ slides to $\tau_{i+1}|X$.  Suppose instead that $\tau_i$
splits to $\tau_{i+1}$ along the branch $b \in \calB(\tau_i)$.  Thus
$\tau_i|X$ splits (or isotopes) to a track $\tau'$ so that
$\tau_{i+1}|X$ is a subtrack.  Let $R_b \subset N(\tau_i)$ be the
rectangle corresponding to the branch $b$.  Isotope $\bdy X$ into
efficient position with respect to $N(\tau_i)$ and recall that $X$ is
to the left of $\bdy X$.  Note that, by an isotopy, we may arrange for
all curves in $\calC(\tau_i|X)$ to be disjoint from $\bdy X$.  Let
$\beta \subset R_b$ be a {\em central splitting arc}: a carried arc
completely contained in $R_b$.

If $\beta \cap X$ is empty then $\tau_i|X$ is identical to
$\tau_{i+1}|X$.  If $\beta \subset X$ then $\tau_{i+1}|X$ is either a
subtrack or a splitting of $\tau_i|X$, depending how the carried
curves of $\tau_i|X$ meet the lift of $R_b$.  In all other cases
$\tau_{i+1}|X$ is a subtrack of $\tau_i|X$.  See \reffig{Meet} for
some of the ways carried subarcs of $\bdy X$ may meet $R_b$.  If $\bdy
X \cap R_b$ contains a tie then $\tau_i|X$ is identical to
$\tau_{i+1}|X$.  This completes the proof of the claim.
\end{proof}

\noindent
Thus \refthm{FellowTravel} is proved. 
\end{proof}

We now rephrase a result of Masur and Minsky using the {\em refinement}
procedure of Penner and Harer~\cite[page 122]{PennerHarer92}.

\begin{theorem}
\cite[Theorem~1.3]{MasurMinsky04}
\label{Thm:Unparam}
For any surface $S$ with $\xi(S) \geq 1$ there is a constant $Q =
Q(S)$ with the following property: For any sliding and splitting
sequence $\{ \tau_i \}_{i=0}^N$ of birecurrent train tracks in $S$ the
sequence $\{ V(\tau_i) \}_{i=0}^N$ forms a $Q$--unparameterized
quasi-geodesic in $\calC(S)$.
\end{theorem}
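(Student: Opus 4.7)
My plan is to reduce the statement to the original Masur--Minsky theorem \cite[Theorem~1.3]{MasurMinsky04}, which asserts the $Q$--unparameterized quasi-geodesic property for splitting sequences of \emph{complete} birecurrent train tracks. The reduction has two independent ingredients: a subsequence argument to absorb slides, and the Penner--Harer refinement procedure to pass from birecurrent to complete.

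First I would handle slides. If $\tau_{i+1}$ is a slide of $\tau_i$, then the set of carried curves $\calC(\tau_i)$ equals $\calC(\tau_{i+1})$ as subsets of $\calC(S)$, so in particular $V(\tau_i)$ and $V(\tau_{i+1})$ coincide (or at worst lie at bounded distance, absorbing a change of normalization of integral measures). Hence consecutive slides contribute zero motion in $\calC(S)$; after choosing a reparametrization $\rho$ that skips over maximal slide runs and records only the true splittings, it suffices to prove the quasi-geodesic estimate for the splitting subsequence.

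Second, given the splitting subsequence $\{\tau_i\}$ (relabeled), I would apply the refinement procedure of \cite[page~122]{PennerHarer92} coherently along the sequence. Each $\tau_i$ refines to a complete birecurrent track $\hat\tau_i$ with $\hat\tau_i \prec \tau_i$, obtained from $\tau_i$ by a bounded number of central splittings (bounded by the number of complementary regions of non-minimal index, hence by a constant $C = C(S)$). By always refining ``in the same way'' --- e.g.\ always performing central splits of the same branches until completeness --- the refinements can be arranged so that $\hat\tau_{i+1}$ is obtained from $\hat\tau_i$ by at most $C+1$ splittings. Applying \cite[Theorem~1.3]{MasurMinsky04} to the concatenated splitting sequence then gives that $\{V(\hat\tau_i)\}_{i=0}^N$ is a $Q'$--unparameterized quasi-geodesic in $\calC(S)$, for some $Q' = Q'(S)$.

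Finally I would compare $V(\tau_i)$ with $V(\hat\tau_i)$. Since $\hat\tau_i \prec \tau_i$, every $\hat\alpha \in V(\hat\tau_i)$ is a carried curve of $\tau_i$, and its transverse measure decomposes as a non-negative integral combination of at most $|\calB(\tau_i)|$ vertex measures of $\tau_i$. Wideness of vertex cycles (\reflem{VerticesWide}) together with the intersection-number estimates implicit in \reflem{Decompose} then bounds $d_S(V(\tau_i), V(\hat\tau_i))$ by a constant $D = D(S)$. Combining the triangle inequality with the $Q'$--unparameterized quasi-geodesic property of $\{V(\hat\tau_i)\}$ yields the desired $Q$--unparameterized quasi-geodesic property for $\{V(\tau_i)\}$, with $Q$ depending only on $S$. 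I expect the main obstacle to be the coherent refinement step: a naive refinement at each $\tau_i$ could introduce uncontrolled motion, so care is required to choose the Penner--Harer central splittings compatibly along the entire sequence so that the refined sequence is itself a splitting sequence.
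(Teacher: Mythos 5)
The paper does not actually supply a proof of \refthm{Unparam}: the result is cited from Masur--Minsky, and the only guidance given is the preceding one-sentence remark that the restatement ``uses the refinement procedure of Penner and Harer.'' Your high-level plan --- absorb slides (which fix $V(\tau_i)$), pass to complete tracks via refinement, invoke Masur--Minsky, then compare vertex sets --- is therefore consistent with what the paper gestures at, and the slide reduction is fine. But the middle step, as you have written it, contains a concrete error. Central splittings do \emph{not} take a birecurrent track to a complete one; they move it farther away. A central split deletes a large branch, absorbing the rectangle $R_b$ and merging the two complementary regions on either side, so by additivity the resulting region has index \emph{more} negative (e.g.\ two trigons of index $-1/2$ merge to a region of index $-1$). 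No sequence of splits can change the indices of the complementary regions in the direction of $-1/2$. To complete a birecurrent track one must instead \emph{extend} it by adding branches in the complementary regions (Penner--Harer, Corollary~1.4.2), and this reverses the carrying relation: one gets $\tau_i \subset \hat\tau_i$, hence $\tau_i \prec \hat\tau_i$, not $\hat\tau_i \prec \tau_i$ as you write. With the direction corrected, the ``coherent refinement'' issue you flag becomes the real content: one must choose the extensions so that $\{\hat\tau_i\}$ is itself a splitting sequence (with $\hat\tau_{i+1}\prec\hat\tau_i$ and $\tau_{i+1}\subset\hat\tau_{i+1}$), and the added branches must not collide with the successive splitting loci. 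You are right that this is the main obstacle, but the sketch leaves it unresolved; it is not addressed by ``always centrally splitting the same branches.'' So the proposal is a reasonable blueprint matching the paper's pointer, but it misidentifies the mechanism at the key step and leaves the resulting gap open.
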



\refthm{FellowTravel} implies that the same result holds after
subsurface projection.

\begin{theorem}
\label{Thm:LocalUnparam}
For any surface $S$ with $\xi(S) \geq 1$ there is a constant $Q =
Q(S)$ with the following property: For any sliding and splitting
sequence $\{ \tau_i \}_{i=0}^N$ of birecurrent train tracks in $S$ and
for any essential subsurface $X \subset S$ if $\pi_X(\tau_N) \neq
\emptyset$ then the sequence $\{ \pi_X(\tau_i) \}_{i=0}^N$ is a
$Q$--unparameterized quasi-geodesic in $\calC(X)$.
\end{theorem}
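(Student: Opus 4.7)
The plan is to reduce to \refthm{Unparam} applied to the induced track sequence in $X$ after using the structure theorem \refthm{FellowTravel} to localize attention to the accessible interval $I_X = [m_X, n_X]$. First, for $i \in [0, m_X]$ the first bullet of \refthm{FellowTravel} applied to $[a,b] = [i, m_X]$ gives $d_X(\tau_i, \tau_{m_X}) \leq \Drift$, and similarly for $i \in [n_X, N]$. Thus the projections on the two ``plateaus'' outside $I_X$ each stay within $\Drift$ of the endpoint, and the reparametrizing function $\rho$ in the unparameterized quasi-geodesic can simply skip over these plateaus in one step.

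Inside $I_X$, when $X$ is not an annulus, \refthm{FellowTravel} asserts that each $\tau_i|X$ is birecurrent and fills $X$, and that $\tau_{i+1}|X$ is a subtrack, slide, or split of $\tau_i|X$. After deleting the subtrack transitions (and noting that passing to a recurrent subtrack only changes $V(\tau_i|X)$ by bounded projection distance, since every vertex cycle of the subtrack is still carried by $\tau_i|X$), we obtain a genuine sliding and splitting sequence $\{\sigma_j\}$ of birecurrent train tracks in $X$ with $\xi(X) < \xi(S)$. Applying \refthm{Unparam} inside $X$ shows that $\{V(\sigma_j)\}$ is a $Q(X)$-unparameterized quasi-geodesic in $\calC(X)$.

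The remaining point is to compare $V(\tau_i|X)$ with $\pi_X(\tau_i)$ up to uniformly bounded error, so that the unparameterized quasi-geodesic in Step 2 transfers back to the original sequence. Given any $\alpha \in V(\tau_i)$ cutting $X$, \reflem{Decompose} promotes it to a wide vertex cycle, and lifting to $S^X$ yields an element of $W(\tau_i^X)$ or, via \reflem{Surgery}, an element of $\calC(\tau_i|X)$ at distance at most $2$. Combined with \reflem{GeneralBound}, any two such choices differ by at most $\Wide + O(1)$ in $\calC(X)$. Concatenating Step 1, Step 2, and this comparison (with reparametrization absorbing the plateaus) yields the desired $Q$-unparameterized quasi-geodesic.

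When $X$ is an annulus the same scheme applies, but \refthm{Unparam} is not available in $\calC(X) = \calA(X)$; here we argue directly. By \refthm{FellowTravel} the core curve is carried and wide in $\tau_i$ throughout $I_X$, both sides are combed in $\tau_i|X$, and each transition changes $\tau_i|X$ by a subtrack, a slide, or at most two splittings. Since $d_X(\beta,\gamma) = 1 + i(\beta,\gamma)$ in an annulus, each such move changes a wide-arc representative of the projection by an additive constant, and the monotonicity in the splitting direction (each split in $I_X$ effects essentially a fractional twist on the wide arcs, as illustrated in \reffig{Meet}) prevents the projections from backtracking by more than a bounded amount. Taking the maximum of the resulting constants produces a uniform $Q = Q(S)$ that works in both cases. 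The main obstacle is the comparison step (Step 3), where one must show that the vertex cycles of $\tau_i|X$ and the subsurface projection $\pi_X(\tau_i)$ are uniformly close; once this is in hand, the structure theorem does essentially all of the work.
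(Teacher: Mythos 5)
Your overall plan — restrict to the accessible interval via the first bullet of \refthm{FellowTravel}, reduce to \refthm{Unparam} for the induced tracks in $X$, and finish with a uniform comparison between $V(\tau_i|X)$ and $\pi_X(\tau_i)$ — matches the paper's strategy, and your Step~3 comparison (via \reflem{Decompose}, wideness, and \reflem{GeneralBound}) is essentially the paper's closing argument.

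The gap is in Step~2, where you pass from the sequence $\{\tau_i|X\}$ to a genuine sliding-and-splitting sequence $\{\sigma_j\}$ by ``deleting the subtrack transitions.'' This does not produce a sliding-and-splitting sequence: if $\tau_{i+1}|X$ is a proper subtrack of $\tau_i|X$ and $\tau_{i+2}|X$ is a split of $\tau_{i+1}|X$, then after deleting the subtrack step you would need $\tau_{i+2}|X$ to be a slide or split of $\tau_i|X$, but it is a split of a \emph{smaller} track; it is not obtained from $\tau_i|X$ by a single elementary move, nor even by any sequence of slides and splits. Observing that the vertex cycles of a subtrack are at bounded distance from those of the original track does not cure this — the issue is that \refthm{Unparam} requires an honest sliding-and-splitting sequence as input, not merely a sequence whose consecutive vertex sets are uniformly close. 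The paper's fix is more specific: fix a single vertex cycle $\alpha \in V(\tau_q|X)$ (with $q = \max I_X$), note that $\alpha$ is carried by $\tau_i|X$ for every $i \leq q$, and take $\sigma_i$ to be the \emph{minimal} subtrack of $\tau_i|X$ carrying $\alpha$. This single track persists through the whole accessible interval, and one checks (using the third bullet of \refthm{FellowTravel}) that $\sigma_{i+1}$ is a slide or split of $\sigma_i$ or is identical to it — the subtrack moves on $\tau_i|X$ leave $\sigma_i$ untouched because $\sigma_i$ is already minimal for $\alpha$. That is the key technical point your proposal is missing.

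On the annulus case: you are right to worry that \refthm{Unparam} does not directly apply when $\xi(X) < 1$, and the paper is somewhat terse there; its later proof of \reflem{Straight} treats the annulus separately using the same $\sigma_i$ construction and exploits the very restricted combinatorics of a wide, combed carried core curve. Your sketch for the annulus (bounded change per move, monotone twisting) is directionally sensible, but as written ``the monotonicity\dots prevents the projections from backtracking'' is not an argument, and without something like the paper's explicit bound on the number of switches in the carried core the claim is unsupported.
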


\begin{proof}
By the first conclusion of \refthm{FellowTravel} we may restrict
attention to the subinterval $[p,q] = I_X \subset [0,N]$.

Fix any vertex $\alpha \in V(\tau_q|X)$.  Note $\alpha$ is carried by
$\tau_i|X$ for all $i \leq q$.  So define $\sigma_i \subset \tau_i|X$
to be the minimal pretrack carrying $\alpha$.  Since $\sigma_i$ does
not carry any peripheral curves $\sigma_i$ is a train track.  Note
that $\sigma_i$ is recurrent by definition and transversely recurrent
by \reflem{TransverseRecurrenceAC}.  Applying \refthm{FellowTravel},
for all $i \in [p, q-1]$ the track $\sigma_{i+1}$ is a slide, a split,
or identical to the track $\sigma_i$.

\refthm{Unparam} implies the sequence $\{ V(\sigma_i) \}$ is a
$Q$--unparameterized quasi-geodesic in $\calC(X)$.  Note that
$d_X(\sigma_i, \tau_i|X)$ is uniformly bounded because $\sigma_i$ is a
subtrack.

Since $\alpha \carr \tau_i|X$ the curve $\alpha$ is also carried by
$\tau_i$.  By \reflem{Decompose} there is a vertex cycle $\beta_i
\carr \tau_i$ that cuts $X$.  Since $\beta_i$ is wide
(\reflem{VerticesWide}) any element $\beta_i' \in \pi_X(\beta)$ is
carried by and wide in $\tau_i^X$.  It follows that $\beta_i'$ and the
vertex cycles of $\tau_i|X$ have bounded intersection.  Thus
$d_X(\tau_i, \tau_i|X)$ is uniformly bounded and we are done.
\end{proof}


\section{Further applications of the structure theorem}
\label{Sec:Applications}

We now turn to Theorems~\ref{Thm:Quasi} and~\ref{Thm:TT}; both are
slight generalizations of a result of Hamenst\"adt~\cite[Corollary
3]{Hamenstadt09}. Our proofs, however, rely on \refthm{FellowTravel}
and are quite different from the proof found in~\cite{Hamenstadt09}.

\subsection{The marking and train track graphs}
Suppose that $S$ is not an annulus. A finite subset $\mu \subset
\AC(S)$ {\em fills} $S$ if for all $\beta \in \calC(S)$ there is a
$\gamma \in \mu$ so that $i(\beta, \gamma) \neq 0$.  If $\mu, \nu
\subset \AC(S)$ then we define
\[
i(\mu, \nu) \,\, 
   = \sum_{\alpha \in \mu, \beta \in \nu} i(\alpha, \beta).
\]
Also, let $i(\mu) = i(\mu, \mu)$ be the self-intersection number.  A
set $\mu$ is a {\em $k$--marking} if $\mu$ fills $S$ and $i(\mu) \leq
k$.  Two sets $\mu, \nu$ are {\em $\ell$--close} if $i(\mu, \nu) \leq
\ell$.

Define $k_0 = \max_\tau i(V(\tau))$, where $\tau$ ranges over tracks
with vertex cycles $V(\tau)$ filling $S$.  Define $\ell_0 =
\max_{\tau,\sigma} i(V(\tau), V(\sigma))$, where $\sigma$ ranges over
tracks obtained from $\tau$ by a single splitting.  Referring to
\cite{MasurMinsky00} for the necessary definitions, we define $k_1 =
\max_\mu i(\mu)$, where $\mu$ ranges over {\em complete clean
markings} of $S$.  Define $\ell_1 = \max_{\mu,\nu} i(\mu, \nu)$, where
$\nu$ ranges over markings obtained from $\mu$ by a single {\em
elementary move}.  Define $\ell_2 = \max_\tau \min_\mu i(V(\tau),
\mu)$.

Note that there are only finitely many tracks $\tau$ and finitely many
complete clean markings $\mu$, up to the action of $\MCG(S)$.  An
index argument bounds $|\calB(\tau)|$ and so bounds the number of
splittings of $\tau$.  Lemma~2.4 of~\cite{MasurMinsky00} bounds the
number of elementary moves for $\mu$. Thus the quantities $k_0, k_1,
\ell_0, \ell_1$ are well-defined.  An upper bound for $\ell_2$ can be
obtained by surgering $V(\tau)$ to obtain a complete clean marking:
see the discussion preceding Lemma~6.1 in~\cite{Behrstock06}.  Now
define $k = \max \{ k_0, k_1 \}$ and $\ell = \max \{ \ell_0, \ell_1,
\ell_2 \}$.  Define $\calM(S)$ to be the {\em marking graph}: the
vertices are $k$--markings and the edges are given by
$\ell$--closeness.  (When $S$ is an annulus we take $\calM(S) =
\calA(S)$.  Recall that $\calA(S)$ is quasi-isometric to $\MCG(S,
\bdy) \isom \ZZ$.)

That $\calM(S)$ is connected now follows from the discussion at the
beginning of~\cite[Section~6.4]{MasurMinsky00}.  Accordingly, define
$d_{\calM(S)}(\mu, \nu)$ to be the length of the shortest edge-path
between the markings $\mu$ and $\nu$.  

Since the above definitions are stated in terms of geometric
intersection number, the mapping class group $\MCG(S)$ acts via
isometry on $\calM(S)$.  Counting the appropriate set of ribbon graphs
proves that the action has finitely many orbits of vertices and edges.
The Alexander method~\cite[Section 2.4]{FarbMargalit10} proves that
vertex stabilizers are finite and hence the action is proper.  It now
follows from the Milnor-\v{S}varc Lemma~\cite[Proposition
I.8.19]{Bridson99} that any Cayley graph for $\MCG(S)$ is
quasi-isometric to $\calM(S)$.

Define $\calT(S)$, the {\em train track graph} as follows: vertices
are isotopy classes of birecurrent train tracks $\tau \subset S$ so
that $V(\tau)$ fills $S$.  Connect vertices $\tau$ and $\sigma$ by an
edge exactly when $\sigma$ is a slide or split of $\tau$.  Let
$d_{\calT(S)}(\tau, \upsilon)$ be the minimal number of edges in a
path in $\calT(S)$ connecting $\tau$ to $\upsilon$, if such a path
exists.  Note that the map $\tau \mapsto V(\tau)$ from $\calT(S)$ to
$\calM(S)$ sends edges to edges (or to vertices) and thus is distance
non-increasing.  For further discussion of graphs tightly related to
$\calT(S)$ see~\cite{Hamenstadt09}.

We adopt the following notations.  If $\{ \tau_i \}_{i=0}^N$ is a
sliding and splitting sequence in $\calT(S)$ and $I = [p,q] \subset
[0,N]$ is a subinterval then $|I| = q - p$ and $d_{\calT(S)}(I) =
d_{\calT(S)}(\tau_p, \tau_q)$.  If $\tau, \sigma \in \calT(S)$ then
define
\[
d_{\calM(X)}(\tau, \sigma) = 
        d_{\calM(X)}\big(V(\tau|X), V(\sigma|X) \big). 
\]
Also take $d_{\calM(X)}(I) = d_{\calM(X)}(\tau_p, \tau_q)$.  

\begin{theorem}
\label{Thm:Quasi}
For any surface $S$ with $\xi(S) \geq 1$ there is a constant $Q =
Q(S)$ with the following property: Suppose that $\{ \tau_i \}_{i=0}^N$
is a sliding and splitting sequence in $\calT(S)$.  Then the sequence
$\{ V(\tau_i) \}_{i=0}^N$, as parameterized by splittings, is a
$Q$--quasi-geodesic in the marking graph.
\end{theorem}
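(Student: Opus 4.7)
The upper bound $d_{\calM(S)}(V(\tau_0), V(\tau_N)) \leq Q \cdot K$ (where $K$ is the number of splittings in the sequence) is essentially immediate: a split $\tau_i \to \tau_{i+1}$ gives $i(V(\tau_i), V(\tau_{i+1})) \leq \ell_0$, and this bounded intersection translates into bounded $\calM(S)$-distance by the standard marking surgeries cited in \cite{Behrstock06}; slides preserve vertex cycles up to bounded $\calM(S)$-distance. Summing across the sequence yields the upper bound.

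The lower bound $K \leq Q \cdot d_{\calM(S)}(V(\tau_0), V(\tau_N)) + Q$ is the main content, and I would prove it by induction on $\xi(S)$, with inductive hypothesis that \refthm{Quasi} holds on every essential subsurface of strictly smaller complexity (and for annuli, where $\calM(X) = \calA(X) \cong \ZZ$, a direct argument using \refthm{FellowTravel} together with the second set of bullets there). The principal tool is the Masur--Minsky distance formula
\[
d_{\calM(S)}(\mu,\nu) \asymp \sum_{Y \subseteq S} \bigl[\, d_Y(\mu,\nu) \,\bigr]_L,
\]
summed over essential subsurfaces $Y$ of $S$ with threshold cut-off $[\,\cdot\,]_L$; this reduces the goal to proving $K \leq_Q \sum_Y [d_Y(V(\tau_0), V(\tau_N))]_L$. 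For each essential $X \subsetneq S$, \refthm{FellowTravel} shows that on the accessible interval $I_X$ the induced tracks $\{\tau_i|X\}_{i \in I_X}$ form a sliding and splitting sequence in $\calT(X)$, with each split of $\tau_i$ in $I_X$ producing at most a pair of splits of $\tau_i|X$. The inductive hypothesis applied in $X$ bounds the split count of $\{\tau_i|X\}$ by $Q \cdot d_{\calM(X)}$, and the first bullet of \refthm{FellowTravel} ensures that $d_X$-motion outside $I_X$ is at most $\Drift$, so the full $d_X$-contribution to the distance formula accounts for the splits of $\tau_i$ inside $I_X$ up to multiplicative and additive constants.

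The principal obstacle is a combinatorial accounting: each split of $\tau_i$ must be charged to at least one essential subsurface $X$ in whose induced track it is visible (\ie, triggers a genuine split of $\tau_i|X$ rather than a slide, identity, or subtrack reduction), and this charging must have bounded multiplicity so that summing the inductive bounds over all $X$ recovers the full split count $K$ without significant over-counting. A natural rule is to charge each split along branch $b$ of $\tau_i$ to the largest essential $X$ with $i \in I_X$ in which the split produces a true split of $\tau_i|X$; one must then verify that every split of $\tau_i$ admits such a charge, and that the doubly iterated sums --- $d_{\calM(X)}$ is itself unpacked by the distance formula at the level of $X$ --- collapse into a single sum over $Y \subseteq S$ comparable to $d_{\calM(S)}(V(\tau_0),V(\tau_N))$. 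This bookkeeping is the technical heart of the proof and parallels the hierarchy resolution arguments of \cite{MasurMinsky00}, but, as the authors stress in the introduction, must be carried out directly from the structure theorem \refthm{FellowTravel}, because splitting sequences are not known to fellow-travel hierarchy resolutions.
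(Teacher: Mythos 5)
Your outline identifies the right skeleton: the upper bound is routine, the lower bound is proved by induction on complexity via the Masur--Minsky distance formula, and the heart of the matter is a charging argument. This is indeed the paper's strategy; it proceeds through \refprop{Induct}, proved by induction on $\xi(X)$. However, you acknowledge leaving the accounting as a gap, and it really is a genuine gap --- the paper spends the bulk of \refsec{Applications} closing it --- so let me describe what is missing and where your proposed charging rule diverges.

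First, a small but important inaccuracy: \refthm{FellowTravel} does \emph{not} say that $\{\tau_i|X\}_{i\in I_X}$ is a sliding and splitting sequence in $\calT(X)$. The third bullets allow $\tau_{i+1}|X$ to be a proper \emph{subtrack} of $\tau_i|X$, and in the non-annular case $\tau_i|X$ need only be birecurrent and filling, not in $\calT(X)$. Thus you cannot simply apply the inductive hypothesis ``in $X$'' to the induced sequence. This is precisely why the paper formulates the inductive claim as \refprop{Induct} --- a statement about $|\calS_X(J_X)|$ for a fixed \emph{top-level} sequence $\{\tau_i\}$ in $\calT(S)$ and a subsurface $X$ --- rather than as \refthm{Quasi} itself applied to an induced sequence. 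The subtrack moves are handled separately (see \refrem{Subtrack}), and the possibility of ``no motion'' in $\calM(X)$ across a stretch of genuine splits requires \refclm{NoMotion}, which relies on recurrence, uniqueness of carrying, and the lexicographic measure argument. None of this appears in your proposal.

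Second, the bookkeeping you flag as the obstacle is indeed where all the work lies, and your proposed rule differs from the paper's. You suggest charging each split of $\tau_i$ to the \emph{largest} $X$ in which the split is visible; the paper's \refdef{Assign} goes the other way, assigning $r \in \calS_X(J_X)$ to $Y$ only when $\tau_{r+1}|Y$ genuinely splits $\tau_r|Y$ and \emph{no strictly smaller} $Z \subset Y$ has that property. The reason for the ``smallest'' convention is that it forces the indices assigned to $X$ itself to fall into \emph{straight} intervals (\refdef{Straight}), whose split counts are bounded via \reflem{Straight} and \reflem{StraightInInduct}. The remaining indices lie in inductive subintervals $J_Y$ and are bounded by the inductive hypothesis in \reflem{InductiveBound}. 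The bounded-multiplicity worry you raise is exactly resolved by \reflem{Nested}, which caps how many inductive $Y$'s can nest above a given $Z$; without such a bound the iterated distance formulas would not collapse. Your proposal contains none of the apparatus of straight vs.\ inductive intervals, the long/short dichotomy (\refdef{Short}, \reflem{LongBound}, \reflem{ShortBound}), or the threshold constants $\Induct(X)$ and $\Straight(X)$ tuned so the various lemmas mesh. In short, the high-level plan is the paper's, but the accounting machinery that makes it actually close up is absent, and at least one of your stated intermediate facts (induced tracks forming a $\calT(X)$-sequence) is false as stated.
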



Our final generalization of~\cite[Corollary 3]{Hamenstadt09} follows
from \refthm{Quasi}:

\begin{theorem}
\label{Thm:TT}
For any surface $S$ with $\xi(S) \geq 1$ there is a constant $Q =
Q(S)$ with the following property: If $\{ \tau_i \}_{i=0}^N$ is a
sliding and splitting sequence in $\calT(S)$, injective on slide
subsequences, then $\{ \tau_i \}$ is a $Q$--quasi-geodesic.
\end{theorem}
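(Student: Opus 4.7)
My plan is to derive \refthm{TT} directly from \refthm{Quasi} using the distance non-increasing map $V \colon \calT(S) \to \calM(S)$, $\tau \mapsto V(\tau)$. Since every sub-interval of a sliding and splitting sequence that is injective on slide subsequences is again such a sequence, it is enough to establish the quasi-geodesic inequality $N =_{Q'} d_{\calT(S)}(\tau_0, \tau_N)$ between endpoints; the $\leq$ direction is automatic because the sequence is itself an edge-path in $\calT(S)$.

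For the matching $\geq$ direction, I combine \refthm{Quasi} with the fact that $V$ sends edges to edges or vertices. Letting $S_N$ denote the number of splittings in the sequence, this gives
\[
d_{\calT(S)}(\tau_0, \tau_N) \;\geq\; d_{\calM(S)}\bigl(V(\tau_0), V(\tau_N)\bigr) \;\geq_Q\; S_N.
\]
So it suffices to show that $N$ is linearly bounded in terms of $S_N$, i.e., that the total number of slides in the sequence is comparable to the number of splittings.

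To this end, I partition $[0, N]$ into the $S_N$ splittings and the $S_N + 1$ maximal blocks of consecutive slides lying between them. By the injectivity hypothesis on slide subsequences, within each block no isotopy class of track is repeated, so the length of each block is at most the cardinality $K = K(S)$ of the slide-equivalence class of its initial track. Hence $N \leq S_N + K(S_N + 1)$, and chaining with the previous display yields $N \leq_{Q'} d_{\calT(S)}(\tau_0, \tau_N)$, as required.

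The one step that genuinely needs attention is the uniform finiteness of slide-equivalence classes of birecurrent filling tracks in $S$, that is, the existence of a bound $K(S)$. This is the main obstacle in the sense that everything else is formal once it is in hand. It is a standard finiteness: a slide is invertible up to isotopy, preserves $|\calB(\tau)|$, and only permutes bounded local combinatorial data at a mixed branch, so the slide groupoid acts on any orbit through a finite set of configurations, with cardinality controlled by a function of $\xi(S)$. Given this input, the entire proof reduces to the two estimates above.
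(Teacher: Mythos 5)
Your proof is correct and follows essentially the same route as the paper: bound the number of slides between consecutive splits using the uniform finiteness of slide-equivalence classes (the paper's Lemma~\ref{Lem:Slide}), then chain through \refthm{Quasi} and the distance non-increasing map $V \colon \calT(S) \to \calM(S)$. The paper states the inequalities directly for arbitrary subintervals $I \subset [0,N]$, while you reduce to endpoints via the observation that the hypotheses are inherited by subintervals, but these are the same argument.
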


Notice that here, unlike \refthm{Quasi}, the parameterization is by
index. We require:

\begin{lemma}
\label{Lem:Slide}
There is a constant $A = A(S)$ so that if $\{ \tau_i \}_{i = 1}^N$ is
an injective sliding sequence in $\calT(S)$ then $N + 1 \leq A$. \qed
\end{lemma}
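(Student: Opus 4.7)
The plan is to show that the set of isotopy classes of train tracks reachable from $\tau_1$ by a sequence of slides is finite, with cardinality bounded by a function of $\xi(S)$. Since the sliding sequence $\{\tau_i\}$ is injective, this immediately bounds $N+1$.

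First I would observe that a single slide preserves the tie neighborhood $N(\tau)$ up to ambient isotopy in $S$: the local model of a slide, as pictured in \reffig{SplitSlide} (bottom), rearranges the pretrack within a fixed union of rectangles, and the resulting tie neighborhood is ambient isotopic to the original. By induction along the sliding sequence, we may isotope the individual $\tau_i$ so that they share a common tie neighborhood $N = N(\tau_1)$, with all branches transverse to the ties of $N$.

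Next I would bound the combinatorial complexity. Since $\tau_1 \in \calT(S)$ has vertex cycles filling $S$, an index argument (as used already in \refsec{Applications} to bound $|\calB(\tau)|$) gives $|\calB(\tau_1)| \leq B(S)$ for some constant $B = B(S)$. Slides preserve the number of branches and switches, so every $\tau_i$ has the same combinatorial size, bounded by $B(S)$. The remaining step is to count the isotopy classes of train tracks $\sigma \subset N$ transverse to the ties with $|\calB(\sigma)| \leq B(S)$ and $N(\sigma) = N$. Such a track is determined by a finite piece of combinatorial data: for each rectangle of $N$ the subarcs of $\sigma$ cutting it are parallel to a tie and their number is bounded, and at each switch there are only finitely many cyclic orderings of the three incident half-branches compatible with the tangent condition. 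Thus the number of such $\sigma$, up to isotopy fixing $N$, is bounded by a constant $A(S)$.

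Combining, the injective sliding sequence can visit at most $A(S)$ distinct tracks, so $N+1 \leq A(S)$. The main obstacle is the precise justification that slides preserve the tie neighborhood up to ambient isotopy (step one); this is a local computation at the mixed branch being slid and is standard in Penner--Harer's treatment of shifts \cite{PennerHarer92}, but it must be invoked carefully to obtain a \emph{common} $N$ for the entire sequence rather than a separate one for each slide.
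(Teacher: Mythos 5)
The paper states this lemma with a bare $\qed$, so there is no authorial proof to compare against; I can only assess your argument on its own terms.

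Your outline — reduce to a fixed ambient structure after observing that slides preserve the filled subsurface, bound the combinatorial size, and then count — is a sensible plan, and steps one and two (that $N(\tau_{i})$ is ambient isotopic to $N(\tau_{1})$ for all $i$, and that $|\calB(\tau_i)|$ is uniformly bounded by an index argument) are correct. But the final counting step has a genuine gap. You assert that each rectangle $R_b$ of $N$ is crossed by a bounded number of subarcs of $\sigma$, but this is exactly the nontrivial point and you give no argument for it. Having bounded branches does not by itself bound the weight $w_\sigma(b)$: a single branch of an embedded carried track can enter and leave a given rectangle of $N(\tau_1)$ many times, and successive slides can, a priori, accumulate such crossings. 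The mutual carrying $\sigma \carr \tau_1$ and $\tau_1 \carr \sigma$ (plus equal branch counts) is what must be exploited to preclude this, e.g.\ via the transition matrices of the two carryings, and that requires a separate argument. Without this bound, the combinatorial data you describe is not finite, and the count collapses. There is also a smaller slip: subarcs of a carried track $\sigma$ inside a rectangle $R_b$ of $N(\tau_1)$ are \emph{transverse} to the ties, not parallel to a tie; as written the description of the combinatorial data is incorrect. Both points should be fixed before the proof can be accepted.
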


\begin{proof}[Proof of \refthm{TT}]
Let $\{ \tau_i \}$ be the given sliding and splitting sequence in
$\calT(S)$.  Let $I = [p,q] \subset [0,N]$ be a subinterval.  Note
that $d_{\calT(S)}(I) \leq |I|$ because $\{ \tau_i \}$ is an edge-path
in $\calT(S)$.

Define $\calS(I)$ to be the set of indices $r \in I$ where $r + 1 \in
I$ and $\tau_{r+1}$ is a splitting of $\tau_r$.  Thus $|I| \leq_A
|\calS(I)|$, where $A$ is the constant of \reflem{Slide}.  Now,
\refthm{Quasi} implies $|\calS(I)| \leq_Q d_{\calM(S)}(I)$.  Finally,
since $\tau \mapsto V(\tau)$ is distance non-increasing we have
$d_{\calM(S)}(I) \leq d_{\calT(S)}(I)$.  Deduce that $|I| \leq_Q
d_{\calM(S)}(I)$, for a somewhat larger value of $Q$.
\end{proof}

\begin{remark}
\label{Rem:Connected}
Note that we have not used the connectedness of $\calT(S)$, an issue
that appears to be difficult to approach combinatorially.  For a proof
of connectedness see \cite[Corollary 3.7]{Hamenstadt09}.
\end{remark}

\subsection{Hyperbolicity and the distance estimate}

To prove \refthm{Quasi} we will need:

\begin{theorem}
\cite[Theorem~1.1]{MasurMinsky99}
\label{Thm:Hyperbolic}
For every connected compact orientable surface $X$ there is a constant
$\delta_X$ so that $\calC(X)$ is $\delta_X$--hyperbolic.  
\end{theorem}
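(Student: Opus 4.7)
The plan is to verify hyperbolicity via Bowditch's ``guessing geodesics'' criterion: it suffices to exhibit, for each pair of vertices $\alpha, \beta \in \calC(X)$, a connected path $P(\alpha, \beta)$ so that (i) these paths have length bounded above by a linear function of $d_X(\alpha, \beta)$, and (ii) for any triple $\alpha, \beta, \gamma$ the path $P(\alpha, \beta)$ lies in a uniformly bounded neighborhood of $P(\alpha, \gamma) \cup P(\gamma, \beta)$.  The constant $\delta_X$ then drops out of the reverse direction of the criterion.

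For the low-complexity cases $X = S_{1,1}$ or $X = S_{0,4}$, the complex $\calC(X)$ is by definition the Farey tessellation $\calF$, which is quasi-isometric to a Cayley graph of $\PSL(2,\ZZ) \isom \ZZ/2 \free \ZZ/3$.  This is a bounded-valence tree with finite decorations, so it is hyperbolic by inspection.  For the remaining cases $\xi(X) \geq 2$ the plan is to build $P(\alpha,\beta)$ using \Teich geodesics: choose hyperbolic structures $\mu, \nu$ in which $\alpha, \beta$ respectively have length less than the Bers constant, take the \Teich geodesic $g_{\mu,\nu}$, and at each time record the curves that are shorter than a fixed threshold.  Consecutive entries along this sequence have bounded geometric intersection (via the collar lemma), hence bounded $\calC(X)$--distance, so the sequence assembles into a path $P(\alpha, \beta)$ of length quasi-bounded by $d_X(\alpha,\beta)$ up to an extraction argument.

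The main obstacle is the slim triangles condition (ii).  The crucial input is that if $g_1$ and $g_2$ are two \Teich geodesics whose terminal measured foliations have small angle at infinity, then a curve which becomes short along $g_1$ at some parameter time $t$ also becomes short along $g_2$ at a comparable parameter time.  This is the content of the ``active interval'' analysis and rests on non-divergence properties of \Teich flow, on the structure of $\PMF(X)$ near the minimal-foliation stratum, and on continuity of the extremal length function.  Granting this, any curve appearing on the side $P(\alpha,\beta)$ is forced to appear (up to bounded distance) on one of the other two sides of the triangle, which yields (ii) with uniform constant.

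A more combinatorial alternative, essentially orthogonal to the above, is the Hensel--Przytycki--Webb unicorn path argument: define unicorn arcs by splicing two given arcs at an intersection point, prove directly that unicorn paths form a $1$--slim triangle family in $\calA(X)$ by an elementary nesting argument on splicing points, and transfer hyperbolicity to $\calC(X)$ via the map $\calA(X) \to \calC(X)$ (sending an arc to the boundary components of its regular neighborhood), whose point-preimages have uniformly bounded diameter.  Either way the entire difficulty is concentrated in proving slimness of triangles; the rest is formal.
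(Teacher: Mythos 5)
This theorem is not proved in the paper: it is Theorem~1.1 of Masur--Minsky \cite{MasurMinsky99}, cited as a black box and used only to feed \reflem{Reverse} the hyperbolicity constant $\delta_X$.  So there is no internal argument to compare your proposal against, and any genuine proof here would necessarily be a different route.

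As an outline, your sketch correctly locates the entire difficulty in verifying a slim-triangles condition, and both routes you name are real ones in the literature (the Teichm\"uller-geodesic approach of Masur--Minsky, as streamlined by Bowditch's guessing-geodesics criterion; and the combinatorial unicorn-path argument of Hensel--Przytycki--Webb, which even gives uniform hyperbolicity).  But both are left as outlines: for the first route you explicitly write ``granting this'' at the active-interval input, and for the second you invoke ``an elementary nesting argument on splicing points'' without giving it.  The step you yourself flag as ``the entire difficulty'' is therefore not supplied, so this does not constitute a proof --- it is a correct map of where proofs live.  One further point worth flagging: Masur and Minsky's 1999 argument does not proceed via Bowditch's guessing-geodesics criterion (which postdates it); they prove hyperbolicity through a nested-projections/contraction argument attached to Teichm\"uller geodesics.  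Routing that construction through Bowditch's criterion, or citing the unicorn-path proof outright, are both legitimate but are later reformulations rather than the original argument.
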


An important consequence of the Morse Lemma~\cite[Theorem
III.H.1.7]{Bridson99} is a reverse triangle inequality. 

\begin{lemma}
\label{Lem:Reverse}
For every $\delta$ and $Q$, there is a constant $\Reverse =
\Reverse(\delta, Q)$ with the following property: For any
$\delta$--hyperbolic space $\calX$, for any $Q$--unparameterized
quasi-geodesic $f \from [m,n] \to \calX$, and for any $a, b, c \in
[m,n]$, if $a \leq b \leq c$ then
\[
d_\calX(\alpha, \beta) + d_\calX(\beta, \gamma) 
    \leq d_\calX(\alpha, \gamma) + \Reverse
\]
where $\alpha, \beta, \gamma = f(a), f(b), f(c)$. \qed
\end{lemma}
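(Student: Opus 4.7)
The plan is to prove this reverse triangle inequality by the standard application of the Morse Lemma (stability of quasi-geodesics) for $\delta$--hyperbolic spaces. The whole point is that in a hyperbolic space, any three points $\alpha, \beta, \gamma$ that occur in order along a quasi-geodesic are forced to be nearly collinear, so $\beta$ lies in a bounded neighborhood of any geodesic $[\alpha, \gamma]$; projecting $\beta$ onto such a geodesic and applying the usual triangle inequality will then give the desired bound.

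First I would restrict the unparameterized quasi-geodesic $f$ to the subinterval $[a,c] \subset [m,n]$. Using the reparameterization $\rho \from [p,q] \to [m,n]$ from the definition of $Q$--unparameterized quasi-geodesic, one cuts $\rho$ to the maximal subinterval $[p',q'] \subset [p,q]$ with $\rho([p',q']) \subset [a,c]$; the composition $f \circ \rho$ restricted to $[p',q']$ is still a $Q$--quasi-geodesic, and the diameter condition on consecutive blocks is inherited, so $f|_{[a,c]}$ is again a $Q$--unparameterized quasi-geodesic. In particular $\alpha \in f(a)$ and $\gamma \in f(c)$ are close to the endpoints of the genuine $Q$--quasi-geodesic $f\circ\rho|_{[p',q']}$, at distance bounded by a function of $Q$ alone.

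Next I would invoke the Morse Lemma (see e.g.\ \cite[Theorem~III.H.1.7]{Bridson99}): there is a constant $M = M(\delta_\calX, Q)$ so that the image of the parameterized quasi-geodesic $f \circ \rho|_{[p',q']}$ lies in the $M$--neighborhood of any geodesic in $\calX$ joining its endpoints. Combining this with the diameter bound $Q$ on blocks of $f$, the image of $f|_{[a,c]}$ itself lies in the $(M+Q)$--neighborhood of a geodesic $\eta$ from $\alpha'$ to $\gamma'$, where $\alpha', \gamma'$ are the endpoints of $f\circ\rho|_{[p',q']}$; fattening once more, $\beta \in f(b)$ lies within some constant $M' = M'(\delta_\calX, Q)$ of a geodesic $\eta'$ from $\alpha$ to $\gamma$.

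Finally, let $p \in \eta'$ be a point with $d_\calX(\beta, p) \leq M'$. Since $p$ lies on a geodesic from $\alpha$ to $\gamma$, we have $d_\calX(\alpha, p) + d_\calX(p, \gamma) = d_\calX(\alpha, \gamma)$. Two applications of the triangle inequality give
\[
d_\calX(\alpha, \beta) + d_\calX(\beta, \gamma)
\leq \bigl(d_\calX(\alpha, p) + M'\bigr) + \bigl(d_\calX(p, \gamma) + M'\bigr)
= d_\calX(\alpha, \gamma) + 2M',
\]
so $\Reverse = 2M'$ works. The only real obstacle is bookkeeping for the Morse constant for the unparameterized version; everything else is routine once one reduces to the parameterized Morse Lemma and the nearest-point projection to $\eta'$.
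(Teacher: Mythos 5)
Your proof is correct and matches the paper's intended approach: the paper states the lemma with no proof (the \qed indicates it is taken as an immediate consequence of the Morse Lemma, citing \cite[Theorem~III.H.1.7]{Bridson99}), and your argument is precisely the standard one — restrict to $[a,c]$, stabilize the reparameterized quasi-geodesic near a geodesic from $\alpha$ to $\gamma$, project $\beta$, and apply the triangle inequality — modulo routine bookkeeping with the diameter-$Q$ blocks and the fact that $f(a),f(b),f(c)$ are sets of diameter at most $Q$ rather than points.
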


We now take $\Reverse = \Reverse(\delta, Q(S))$ where $\delta = \max
\{ \delta_X \st X \subset S \}$, as provided by \refthm{Hyperbolic},
and $Q(S)$ is the constant of \refthm{LocalUnparam}.  The next central
result needed is the {\em distance estimate} for $\calM(S)$.  Let
$[\cdot]_C$ be the {\em cut-off function}:
\[
[x]_C \, = \, 
\left\{
\begin{array}{cc}
0, & \mbox{if $x < C$} \\
x, & \mbox{if $x \geq C$} 
\end{array}
\right\}.
\]
We may now state the distance estimate:

\begin{theorem}
\cite[Theorem~6.12]{MasurMinsky00}
\label{Thm:DistEst}
For any surface $S$, there is a constant $\CutOff(S)$ so that for
every $C \geq \CutOff(S)$ there is an $\Err \geq 1$ so that for all
$\mu, \nu \in \calM(S)$
\[
d_{\calM(S)}(\mu, \nu) \,\, =_\Err \,\, \sum [d_X(\mu, \nu)]_C
\]
where the sum ranges over essential subsurfaces $X \subset S$. 
\end{theorem}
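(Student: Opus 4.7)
The plan is to establish the claimed quasi-equality $d_{\calM(S)}(\mu,\nu) =_\Err \sum_X [d_X(\mu,\nu)]_C$ by proving the two inequalities separately. The inequality $\sum_X [d_X(\mu,\nu)]_C \leq_\Err d_{\calM(S)}(\mu,\nu)$ is the routine direction; the reverse inequality is the substantive one and requires a hierarchy-type construction.

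For the routine direction, I would fix an edge-path $\mu = \mu_0, \mu_1, \ldots, \mu_n = \nu$ realizing $n = d_{\calM(S)}(\mu,\nu)$. Since consecutive markings satisfy $i(\mu_i, \mu_{i+1}) \leq \ell$, the projections $\pi_X(\mu_i)$ and $\pi_X(\mu_{i+1})$ have $\calC(X)$-diameter bounded by some $D = D(\ell)$ for every essential $X$, by a routine intersection-number calculation. This already yields $d_X(\mu,\nu) \leq Dn$ for each fixed $X$, but to bound the actual sum I would invoke two external tools: the bounded geodesic image theorem and the Behrstock inequality. Together they force any subsurface $X$ with $d_X(\mu,\nu) \geq C$, for $C$ sufficiently large, to leave a definite footprint on any $\calC(S)$-geodesic between base curves of $\mu$ and $\nu$, and to be essentially ordered along such a geodesic, so the total sum grows at most linearly in $n$.

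For the substantive direction I would build a Masur--Minsky hierarchy $H = H(\mu,\nu)$: a main tight geodesic $g_S$ in $\calC(S)$ connecting a curve of $\mu$ to a curve of $\nu$, together with, recursively, subordinate tight geodesics $g_Y$ in $\calC(Y)$ for every component domain $Y$ arising in $H$ (including annular domains handled separately). The key structural fact to establish is that $Y$ supports a geodesic in $H$ precisely when $d_Y(\mu,\nu)$ exceeds a uniform threshold, and the length of $g_Y$ is then quasi-equal to $d_Y(\mu,\nu)$; consequently the total length of $H$, summed over all its component domains, is quasi-equal to $\sum_X [d_X(\mu,\nu)]_C$ once $C$ is chosen large enough.

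The final step is to resolve $H$ into an edge-path in $\calM(S)$: I would produce a sequence of slices $\mu = \nu_0, \nu_1, \ldots, \nu_m = \nu$, where each slice records the simultaneous state of the geodesics of $H$ and passing from one slice to the next corresponds to advancing one edge of a constituent geodesic. Each slice step costs only boundedly many elementary moves in $\calM(S)$, so $d_{\calM(S)}(\mu,\nu)$ is bounded above by a constant multiple of the total length of $H$, completing the chain of inequalities. The main obstacle is the hierarchy machine itself: the existence and local uniqueness of tight geodesics, the correspondence between large-projection subsurfaces and component domains supporting subordinate geodesics, and the verification that slice-to-slice motion costs only boundedly many elementary moves. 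The $\delta$-hyperbolicity of every subsurface curve complex (\refthm{Hyperbolic}) and the bounded geodesic image theorem are essential inputs throughout; without them neither the recursion nor the length estimates can be controlled.
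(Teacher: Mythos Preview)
The paper does not give a proof of this statement: \refthm{DistEst} is quoted verbatim from Masur--Minsky~\cite[Theorem~6.12]{MasurMinsky00} and used as a black box in the proof of \refprop{Induct}. There is therefore no ``paper's own proof'' to compare your proposal against.

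That said, your outline is a faithful sketch of the original Masur--Minsky argument: the easy direction via Lipschitz projections, Behrstock's inequality, and the bounded geodesic image theorem; the hard direction via constructing a hierarchy $H(\mu,\nu)$, identifying its component domains with the large-projection subsurfaces, and resolving $H$ into a path of slices in the marking graph. This is correct in spirit and is exactly the machinery the present paper is invoking by citation. The only caveat is that what you have written is a plan, not a proof---each of the three ingredients you name (existence and structure of tight hierarchies, the large-link lemma matching component domains to large projections, and the bounded-move resolution of slices) is itself a substantial piece of~\cite{MasurMinsky00}, and none of them is reproved here.
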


\subsection{Marking distance}
Suppose that $\{ \tau_i \}_{i=0}^N \subset \calT(S)$ is a sliding and
splitting sequence.  Let $V_i = V(\tau_i)$ be the set of vertex cycles
of $\tau_i$.  As $i(V_i) \leq k_0$ and $i(V_i, V_{i+1}) \leq \ell_0$
the map $i \mapsto V_i$ gives rise to an edge-path in $\calM(S)$.

Suppose that $[p,q] \subset [0,N]$.  Let $\calS_X(p,q)$ be the set of
indices $r \in [p,q-1]$ so that $\tau_{r+1}|X$ is a splitting of
$\tau_r|X$.  (When $X$ is an annulus $\tau_{r+1}|X$ may also differ
from $\tau_r|X$ by a pair of splits.)  Since slides do not effect
$P(\tau)$ \cite[Proposition 2.2.2]{PennerHarer92} the distance in
$\calM(S)$ between $V_p$ and $V_q$ is at most $|\calS_S(p,q)|$.

\begin{remark}
\label{Rem:Subtrack}
We do not place indices $r$ onto $S_X$ where $\tau_{r+1}|X$ is a
subtrack of $\tau_r|X$; the number of such indices is bounded by a
constant depending only on $X$.
\end{remark}

In the other direction, we must bound the number of splittings between
$\tau_p$ and $\tau_q$ in terms of the marking distance between $V_p$
and $V_q$.  This will be done inductively.  As a notational matter set
$I_S = [0,N]$.  When $X \subset S$ is essential take $V(\tau|X)$ to be
the vertex cycles of the induced track.  Recall that $I_X \subset
I_S$, defined in \refsec{Access}, is the accessible interval for $X
\subset S$.  If $I = [m,n] \subset [0,N]$ is an interval then define
$\calS_X(I) = \calS_X(m,n)$, $d_X(I) = d_X(\tau_m, \tau_n)$, and so
on.

\begin{proposition}
\label{Prop:Induct}
Suppose that $X \subset S$ is an essential subsurface and $J_X \subset
I_X$ is a subinterval.  There is a constant $A = A(X)$, independent of
the sequence $\{ \tau_i \}$, so that $|\calS_X(J_X)| \leq_A
d_{\calM(X)}(J_X).$
\end{proposition}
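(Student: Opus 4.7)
The plan is to argue by induction on the complexity $\xi(X)$, using the Masur--Minsky distance estimate (\refthm{DistEst}) applied inside $\calM(X)$ to reduce the count of splittings of the induced track on $X$ to a sum of contributions, one per essential $Y \subseteq X$, each of which is either handled by \refthm{LocalUnparam} (for $Y = X$) or by the inductive hypothesis (for $Y \subsetneq X$).

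For the base cases, when $X$ is an annulus, $\calM(X) = \calA(X)$ is quasi-isometric to $\ZZ$ and \refthm{FellowTravel} tells us that on $J_X \subset I_X$ the induced sequence $\{\tau_i|X\}$ is a sliding, subtrack, or (at most pair of) splitting sequence whose carried arcs form a quasi-geodesic in $\calA(X)$; the count $|\calS_X(J_X)|$ is then bounded by the diameter in $\calA(X)$ plus a constant. When $\xi(X)=1$ the complex $\calM(X)$ is quasi-isometric to $\calC(X) = \calF$, and \refthm{LocalUnparam} applied to the induced sliding and splitting sequence on $X$ (which is legitimate by the structure theorem) immediately yields the bound.

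For the inductive step with $\xi(X) \geq 2$, I would apply \refthm{DistEst} to rewrite
\[
d_{\calM(X)}(J_X) \,\, =_\Err \,\, \sum_{Y \subseteq X} \bigl[d_Y(J_X)\bigr]_C,
\]
where $Y$ ranges over essential subsurfaces of $X$ (including annuli and $X$ itself). I would then partition $\calS_X(J_X)$ by assigning each splitting index $r$ either to $X$ itself, if the split large branch $b \subset \tau_r|X$ is not supported inside any proper essential $Y \subsetneq X$ along $J_X \cap I_Y$, or to such a $Y$ otherwise. For the first type, \refthm{LocalUnparam} guarantees that $\{\pi_X(\tau_i|X)\}_{i \in J_X}$ is a $Q$--unparameterized quasi-geodesic in $\calC(X)$; together with the structure theorem's assertion that $\tau_i|X$ is birecurrent and fills $X$, this bounds the number of $X$-level splittings by a constant times $[d_X(J_X)]_C + 1$. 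For the second type, a splitting of $\tau_r|X$ that is ``captured'' by a proper $Y$ corresponds (with bounded multiplicity, by \reflem{WideInAnnuli} or the wideness of $\bdy Y$) to a splitting of $\tau_r|Y$, so the contribution is at most a constant times $|\calS_Y(J_X \cap I_Y)|$. Applying the inductive hypothesis to each such $Y$ and then \reflem{Reverse} (to pass from $d_{\calM(Y)}(J_X \cap I_Y)$ to $d_{\calM(Y)}(J_X)$, which is legitimate because outside $I_Y$ the projection to $\calC(Y)$ moves by at most $\Drift$ by \refthm{FellowTravel}) converts each contribution to $[d_Y(J_X)]_C$ up to additive error. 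Summing over all $Y \subseteq X$ and invoking \refthm{DistEst} in reverse yields the desired quasi-bound $|\calS_X(J_X)| \leq_A d_{\calM(X)}(J_X)$. The constant $A = A(X)$ can be taken uniform by noting that there are only finitely many $\MCG(S)$--orbits of essential subsurfaces.

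The hardest step is the bookkeeping in the partition of $\calS_X(J_X)$: one must verify that every splitting of $\tau_r|X$ is charged at most boundedly often, that ``captured'' splittings really descend to splittings (not merely subtrack operations) in $\tau_r|Y$, and that the error terms coming from subtrack moves (as noted in \refrem{Subtrack}) and from the reverse triangle inequality accumulate only linearly, not multiplicatively, across the induction. Once this combinatorial accounting is in place, the inductive step is a direct application of the distance estimate combined with \refthm{LocalUnparam} and \refthm{FellowTravel}.
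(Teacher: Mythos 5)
Your high-level plan — induct on complexity, invoke the distance estimate in $\calM(X)$, and split the count into an $X$-level contribution plus contributions from proper $Y \subsetneq X$ — is the right frame, and it matches the paper's strategy. But the core of the paper's proof is missing, and where you gesture at it, the gesture would not close.

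The central gap is your treatment of the $X$-level splittings. You claim that \refthm{LocalUnparam} together with birecurrence/filling ``bounds the number of $X$-level splittings by a constant times $[d_X(J_X)]_C + 1$.'' This is exactly what an unparameterized quasi-geodesic does \emph{not} give you: the sequence $\{\pi_X(\tau_i)\}$ can stall for an arbitrarily long subinterval during which no proper $Y$ ``captures'' the splitting (for instance, when the action is spread over many small annuli, each below threshold). The paper's mechanism for converting ``unparameterized'' into a genuine rate of progress is the notion of a \emph{straight} interval (\refdef{Straight}), and the key technical step is \reflem{Straight}: inside a straight interval one applies the distance estimate in $\calM(X)$ and observes, via the threshold $\Straight(X)$, that only the term $d_X$ can exceed the cut-off, so every $\Radius$-sized step in $\calM(X)$ forces a jump of at least $\Reverse + 1$ in $\calC(X)$; hyperbolicity via \reflem{Reverse} then converts these jumps into net progress. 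Nothing in your proposal plays this role. Related, you have no mechanism for bounding the number of straight intervals that make little or no progress in $\calC(X)$ (the paper's \emph{short} intervals); the paper bounds their number by $|\Ind'|$ and then appeals to the hierarchy machine (Lemma~6.2 and Theorem~6.10 of~\cite{MasurMinsky00}) to bound $|\Ind'|$ by $d_{\calM(X)}(J_X)$. You flag that you need linear rather than multiplicative error accumulation, but the device that actually achieves this — \reflem{Nested}, controlling the nesting of inductive subsurfaces so the double sum $\sum_{Y}\sum_{Z\subset Y}d_Z(J_X)$ collapses — is absent. Finally, a factual slip in your base cases: for $\xi(X)=1$ the marking graph $\calM(X)$ is quasi-isometric to $\MCG(X)$, not to $\calC(X)=\calF$ (vertex stabilizers in $\calF$ are infinite, so Milnor--\v{S}varc fails); the paper's proof does not rely on such a base case, handling annuli inside \reflem{Straight} and treating all other complexities uniformly via the inductive subsurface machinery.
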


The rest of this section is devoted to the proof of \refprop{Induct},
from which \refthm{Quasi} follows.

\subsection{Inductive and straight intervals}

We fix two thresholds $\Induct, \Straight$ so that:
\begin{align*}
\max \big\{ 6\NoMotion + 2\SplitSub + 2\Drift(X) + 2, 
        2\Reverse, M_2(X), \CutOff(X) \big\} 
                             &\leq \Induct(X) \\
\max \big\{ \Induct(X) + 2\Reverse, \Branch \SplitSub \big\}
                             &\leq \Straight(X) 
\end{align*}

Here $\NoMotion$ is an upper bound for $d_Y(\alpha, \beta)$ where $Y
\subset S$ is any essential subsurface, $\tau$ is a track, and
$\alpha$ and $\beta$ are wide with respect to $\tau$.  The constant
$\Branch$ is an upper bound for the number of branches in any induced
track.  The constant $\SplitSub$ is an upper bound for the distance
(in any subsurface projection) between the vertices of $\tau$ (or
$\tau|X$) and the vertices of a single splitting or subtrack of
$\tau$.  Also, $M_2(X)$ is the constant provided by Lemma~6.1
of~\cite{MasurMinsky00}.

Recall that the interval $J_X \subset I_X$ is given in
\refprop{Induct}.

\begin{definition}
\label{Def:Induct}
Suppose that $Y \subset X$ is an essential subsurface with $\xi(Y) <
\xi(X)$.  If $d_Y(J_X) \geq \Induct(X)$ then we call $Y$ an {\em
inductive} subsurface of $X$ and take $J_Y = I_Y \cap J_X$ as the
associated {\em inductive subinterval} of $J_X$.  If $d_Y(J_X) <
\Induct(X)$ then we set $J_Y = \emptyset$.
\end{definition}

Suppose $I$ is a subinterval of $J_X$.  Define $\diam_Y(I)$ to be the
diameter, in $\AC(Y)$ of the union $\cup_{i \in I} \pi_Y(\tau_i)$.

\begin{definition}
\label{Def:Straight}
A subinterval $I \subset J_X$ is a {\em straight subinterval} for
$X$ if for all essential subsurfaces $Y \subset X$, with $\xi(Y) <
\xi(X)$, we have $\diam_Y(I) \leq \Straight(X)$.
\end{definition}

\begin{lemma}
\label{Lem:Disjoint}
If $I \subset J_X$ is disjoint from all inductive subintervals of
$J_X$ then $I$ is straight for $X$.
\end{lemma}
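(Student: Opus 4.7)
The plan is to fix an essential subsurface $Y \subset X$ with $\xi(Y) < \xi(X)$ and bound $\diam_Y(I)$ by $\Straight(X)$, splitting into two cases according to whether $Y$ is inductive. Throughout I would use that $\{ \pi_Y(\tau_i) \}_{i=0}^N$ is a $Q$--unparameterized quasi-geodesic in $\calC(Y)$ by \refthm{LocalUnparam}, together with the fact that $d_Y(\tau_i, \tau_j) = \diam_Y(\pi_Y(\tau_i) \cup \pi_Y(\tau_j))$, so that $\diam_Y(I)$ is controlled by the supremum of $d_Y(\tau_i, \tau_j)$ over pairs $i, j \in I$ with nonempty projections.

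In the first case, $Y$ is not inductive, so by \refdef{Induct} we have $d_Y(J_X) < \Induct(X)$. For any $i, j \in I \subset J_X$ with nonempty projections, two applications of the reverse triangle inequality \reflem{Reverse} along the unparameterized quasi-geodesic $\{\pi_Y(\tau_k)\}$ should yield $d_Y(\tau_i, \tau_j) \leq d_Y(J_X) + 2\Reverse$. Taking the supremum over such pairs then gives $\diam_Y(I) \leq \Induct(X) + 2\Reverse$, which is at most $\Straight(X)$ by the choice of thresholds at the start of \refsec{Access}.

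In the second case, $Y$ is inductive, so $J_Y = I_Y \cap J_X$. The key observation here is the set-theoretic identity $I \cap I_Y = I \cap (I_Y \cap J_X) = I \cap J_Y$, which holds because $I \subset J_X$; combined with the hypothesis that $I$ is disjoint from every inductive subinterval, this forces $I \cap I_Y = \emptyset$. The first bullet of \refthm{FellowTravel} applied to any subinterval $[i, j] \subset I$ with $\pi_Y(\tau_j) \neq \emptyset$ then yields $d_Y(\tau_i, \tau_j) \leq \Drift$, so $\diam_Y(I) \leq \Drift \leq \Straight(X)$. The main (small) subtlety to watch is this set-theoretic identity, which is precisely how the hypothesis ``disjoint from every inductive subinterval'' is converted into the disjointness statement $I \cap I_Y = \emptyset$ needed to invoke \refthm{FellowTravel}; the remainder is constant chasing arranged by the definitions of $\Induct(X)$ and $\Straight(X)$.
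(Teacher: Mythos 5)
Your proof is correct and uses the same ingredients as the paper's (the first bullet of \refthm{FellowTravel}, the reverse triangle inequality \reflem{Reverse}, and the threshold inequalities defining $\Induct(X)$ and $\Straight(X)$), differing only cosmetically: you split cases on whether $Y$ is inductive, whereas the paper fixes a subinterval $J \subset I$ and splits on whether $J$ meets $I_Y$. These case analyses are logically equivalent --- when $Y$ is inductive your identity $I \cap I_Y = I \cap J_Y = \emptyset$ is exactly what makes every $J \subset I$ fall into the paper's first case, and when some $J$ meets $I_Y$ the paper deduces $Y$ is not inductive, matching your other case.
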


\begin{proof}
Fix an essential $Y \subset X$ with $\xi(Y) < \xi(X)$.  It suffices to
show, for every subinterval $J \subset I$, that $d_Y(J) \leq
\Straight(X)$.

If $J \cap I_Y = \emptyset$ then \refthm{FellowTravel} implies $d_Y(J)
\leq \Drift$.
Suppose that $J$ meets $I_Y$; thus $J_Y = \emptyset$ by hypothesis and
so $Y$ is not inductive.  It follows that $d_Y(I) < \Induct(X)$.  By
\reflem{Reverse} we have $d_Y(J) < \Induct(X) + 2\Reverse$.
\end{proof}

\begin{lemma}
\label{Lem:Straight}
There is a constant $A = A(X)$, independent of $\{ \tau_i \}$, so that
if $I \subset J_X$ is straight then $|\calS_X(I)| \leq_A d_X(I)$.
\end{lemma}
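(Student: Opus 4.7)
The plan is to combine \refthm{LocalUnparam}, \refthm{DistEst}, and straightness. First, the unparameterized quasi-geodesic structure on $\pi_X(\tau_i)$ reduces matters to bounding the splitting count on $\calC(X)$--diameter-bounded ``pieces''; then straightness coupled with the distance estimate forces the marking distance across each piece to be bounded; finally a finiteness argument turns this into a splitting-count bound.

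By \refthm{LocalUnparam} applied to the restricted sequence $\{\tau_i\}_{i \in I}$, the projection $\{\pi_X(\tau_i)\}$ is a $Q$--unparameterized quasi-geodesic in $\calC(X)$. Let $\rho \from [0,K] \to I$ be the associated reparametrization, so that $j \mapsto \pi_X(\tau_{\rho(j)})$ is a genuine $Q$--quasi-geodesic in $\calC(X)$ and each piece $[\rho(j), \rho(j+1)]$ has $\calC(X)$--diameter at most $Q$. The quasi-geodesic property gives $K \leq_Q d_X(I)$, so it suffices to show that each piece $J = [\rho(j), \rho(j+1)]$ contains at most $A''(X)$ indices in $\calS_X(I)$.

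Apply \refthm{DistEst} in $X$ with cutoff $C$ chosen larger than $\max\{\Straight(X), Q(X), \CutOff(X)\}$ plus a uniform constant reconciling projections of $\tau_i$ with those of $\tau_i|X$ (the vertex cycles of $\tau_i|X$ are wide, so both projections agree up to bounded error by \reflem{GeneralBound} and \reflem{AnnulusBound}). For every essential $Y \subsetneq X$, straightness of $I$ gives $d_Y(J) \leq \diam_Y(I) \leq \Straight(X) < C$, so $[d_Y(J)]_C = 0$; and $d_X(J) \leq Q < C$ by the piece-diameter bound, so $[d_X(J)]_C = 0$. The distance estimate then yields
\[
d_{\calM(X)}\bigl(V(\tau_{\rho(j)}|X),\, V(\tau_{\rho(j+1)}|X)\bigr) \leq \Err
\]
for a constant $\Err$ depending only on $X$. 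Since $\MCG(X)$ acts cocompactly on the vertices of $\calM(X)$ with finite stabilizers, any fixed-radius ball in $\calM(X)$ contains only finitely many vertices; each such vertex arises from only finitely many birecurrent tracks with at most $\Branch$ branches. As each split in $\calS_X(J)$ strictly shrinks the carrying polytope $P(\tau_i|X)$ and thus never repeats an induced track, we obtain $|\calS_X(J)| \leq A''(X)$. Summing over pieces yields $|\calS_X(I)| \leq A''(X) \cdot K \leq_A d_X(I)$ for a constant $A = A(X)$.

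The main obstacle is the last step: passing from bounded marking distance to bounded splitting count. \emph{A priori} splittings of the induced track could accumulate within a bounded marking region of $\calM(X)$ without any visible $\calC(X)$--motion. The control comes from two ingredients working in concert: the rigidity of splittings (they monotonically shrink the carrying polytope and so are never repeated along a sequence) together with the local finiteness of $\calM(X)$ provided by the proper, cocompact action of $\MCG(X)$.
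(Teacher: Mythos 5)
Your proof takes a genuinely different route from the paper's. Where the paper builds its reparametrization directly from marking-distance increments of a fixed size $\Radius$ (so that intermediate markings automatically stay in a ball) and controls splittings per piece via a separate monotonicity argument (the Claim labelled \ref{Clm:NoMotion}, which tracks the lexicographic decrease of weights on large branches), you instead pull the reparametrization from \refthm{LocalUnparam} and then replace the monotonicity argument by an appeal to local finiteness of $\calM(X)$ together with the strict shrinking of $P(\tau_i|X)$. For $X$ non-annular this works, and is arguably cleaner: the ``finitely many tracks with a given vertex-cycle marking'' claim (which follows from finitely many $\MCG(X)$-orbits of tracks with $\leq \Branch$ branches plus finite stabilizers of filling multicurves via the Alexander method) subsumes the need for Claim~\ref{Clm:NoMotion}. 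One small imprecision: you only bound $d_{\calM(X)}$ between the piece \emph{endpoints}, but your finiteness step implicitly needs all markings $V(\tau_i|X)$ for $i$ in the piece to lie in a bounded ball. This is easily fixed since your bounds $d_Y(\tau_i,\tau_{i'}) \leq \Straight(X)$ and $d_X(\tau_i,\tau_{i'}) \leq Q$ hold for \emph{every} pair $i,i'$ in the piece, not just the endpoints, so the same application of \refthm{DistEst} bounds the whole piece.

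The genuine gap is the annular case, which the paper handles in its first paragraph by an entirely separate argument. When $X$ is an annulus the induced track $\tau_i|X \subset S^X$ is \emph{non-compact with infinitely many branches}, so your claim that ``each such vertex arises from only finitely many birecurrent tracks with at most $\Branch$ branches'' simply has no content: $\Branch$ does not bound $|\calB(\tau_i|X)|$ here, and the carrying polytope $P(\tau_i|X)$ is not the finite-dimensional object the shrinking argument requires. The paper avoids this by exploiting \refthm{FellowTravel}: the core curve is carried by and wide in $\tau_i$ for all $i \in I$, so it passes through a uniformly bounded number of switches, and one then tracks a fixed wide arc $\beta \in V(\tau_q|X)$ through minimal subtracks $\sigma_r$, which have a uniformly bounded number of branches, converting splittings of $\tau_r|X$ into splittings of $\sigma_r$ at a controlled rate. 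You would need to supply something like this (or restrict attention to a compact core of $\tau_i|X$ and rerun your finiteness argument there) before your proof covers all essential $X \subset S$.
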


\begin{proof}
If $X$ is an annulus then, by \refthm{FellowTravel}, for every $r \in
I$ the core curve $\alpha \subset X$ is carried by and wide in
$\tau_r$.  It follows that the number of switches in $\alpha \subset
\tau_r|X$ is bounded by some constant $K = K(S)$.  Let $q = \max I$
and pick any $\beta \in V(\tau_q|X)$.  As in the proof of
\refthm{LocalUnparam} let $\sigma_r \subset \tau_r|X$ be the minimal
subtrack carrying $\beta$.  Thus $\sigma_r$ has either exactly four
branches and two switches, or is an embedded arc.  It follows that
every $K^2/4$ consecutive splittings in $S_X(I)$ induces at least one
splitting in the sequence of tracks $\{ \sigma_r \}$.  Therefore the
singleton sets $V(\sigma_r)$ form a quasi-geodesic in $\calA(X)$.
Since $V(\sigma_r) \subset V(\tau_r|X)$ the proof is complete when $X$
is an annulus.

We assume for the rest of the proof that $X$ is not an annulus.  The
map $i \mapsto V(\tau_i|X)$, taking tracks to their vertex cycles, is
generally not injective.  (For example, see~\cite[Proposition
2.2.2]{PennerHarer92}.)  However:

\begin{claim}
\label{Clm:NoMotion}
There is a constant $\Inj = \Inj(X)$, independent of $\{ \tau_i \}$,
so that if $V(\tau_r|X) = V(\tau_s|X)$ then $|\calS_X(r,s)| \leq
\Inj$.
\end{claim}

\begin{proof}
Let $\mu = V(\tau_r|X)$.  Our hypothesis on $\tau_s|X$ and induction
proves that $V(\tau_t|X) = \mu$ for all $t \in [r,s]$.  Recurrence and
uniqueness of carrying \cite[Proposition 3.7.3]{Mosher03} implies that
$\tau_{t+1}|X$ is a split or a slide of $\tau_t|X$, and not a
subtrack, for all $t \in [r,s-1]$.

If $t \in [r,s]$ and $b \in \calB(\tau_t|X)$ then define $w_\mu(b) =
\sum_{\alpha \in \mu} w_\alpha(b)$.  Let
\[
M(t) = 
   \big( w_\mu(b) :
         \mbox{$b$ is a large branch of $\tau_t|X$} \big) 
\]
be the sequence of given numbers, arranged in non-increasing order.
Note that if $\tau_{t+1}|X$ is a slide of $\tau_t|X$ then $M(t+1) =
M(t)$.  However, if $t \in \calS_X(r, s)$ then the recurrence of
$\tau_t|X$ implies that $M(t+1) < M(t)$, in lexicographic order.
Since there are only finitely many possibilities for an induced track
$\tau|X$, up to the action of $\MCG(X)$, the claim follows.
\end{proof}




Notice that if $V(\tau_{i+1}|X) \neq V(\tau_i|X)$ then
$V(\tau_{i+1}|X) \neq V(\tau_j|X)$ for $j \leq i$.  This is because
$P(\tau_{k+1}|X) \subset P(\tau_k|X)$ for all $k$.  Using $C = 1 +
\max \{ \CutOff(X), \Straight(X) \}$ as the cut-off in
\refthm{DistEst} gives some constant of quasi-equality, say
$\Err$. Define $\Radius = \Err + 1$.

Suppose that $[p,q] = I$, the straight subinterval of $J_X$ given by
\reflem{Straight}.  We define a function $\rho \from [0,M] \to I$ as
follows: let $\rho(0) = p$ and let $\rho(n+1)$ be the smallest element
in $[\rho(n), q]$ with $d_{\calM(X)}(\tau_{\rho(n)}, \tau_{\rho(n+1)})
= \Radius$.  (If $\rho(n+1)$ is undefined then take $M = n+1$ and
$\rho(M) = q$.)  Let $B(\mu)$ be the ball of radius $\Radius$ about
the marking $\mu \in \calM(X)$.  Define
\[
\Volume = \max \big\{ |B(\mu)| : \mu \in \calM(X) \big\}.
\]
Deduce from \refclm{NoMotion} and the remark immediately following
that, for all $n \in [0, M - 1]$,
\begin{gather*}
|\calS_X(\rho(n), \rho(n+1))| \leq \Inj \Volume. \\
\tag*{\text{Thus}}
|\calS_X(I)| \leq \Inj \Volume \cdot M.
\end{gather*}
So to prove \reflem{Straight} it suffices to bound $M$ from above in
terms of $d_X(I)$.

\begin{claim}
\label{Clm:Jump}
Fix $n \in [0,M-2]$.  Let $\tau, \sigma = \tau_{\rho(n)},
\tau_{\rho(n+1)}$.  Then $d_X(\tau, \sigma) \geq \Reverse + 1$.
\end{claim}

\begin{proof}
We use \refthm{DistEst}.  Note that $d_{\calM(X)}(\tau, \sigma) =
\Radius$.  Since $\Radius$ is greater than the additive error there is
at least one non-vanishing term in the sum $\sum_{Y \subset X}
[d_Y(\tau, \sigma)]_C$.  

However, since $[\rho(n), \rho(n+1)] \subset [p,q]$ and $[p,q] = I$ is
straight we have $d_Y(\tau, \sigma) \leq \Straight(X)$ for all $Y
\subset X$ with $\xi(Y) < \xi(X)$.  Thus $d_X(\tau, \sigma)$ is the
only term of the sum greater than the cut-off $C$.  Since $C >
\Straight(X) \geq \Reverse$, we have $d_X(\tau, \sigma) \geq \Reverse
+ 1$ and the claim is proved.
\end{proof}

Thus we have
\begin{align*}
d_X(I)    &\geq -(M - 1) \cdot \Reverse + 
             \sum_{n = 0}^{M - 1} d_X(\tau_{\rho(n)}, \tau_{\rho(n+1)}) \\
          &\geq M - 1 + d_X(\tau_{\rho(M-1)}, \tau_{\rho(M)}) \\
          &\geq M - 1 
\end{align*}
where the first and second lines follow from \reflem{Reverse} and
\refclm{Jump} respectively.  This completes the proof of
\reflem{Straight}.
\end{proof}

\begin{lemma}
\label{Lem:StraightInInduct}
There is a constant $A = A(X)$ with the following property: Suppose
that $J_Y \subset J_X$ is an inductive interval.  Suppose that $I
\subset J_Y$ is a straight subinterval for $X$.  Then $|\calS_X(I)|
\leq A$.
\end{lemma}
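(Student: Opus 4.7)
The plan is to reduce via \reflem{Straight}. Since $I \subset J_X$ is straight for $X$, that lemma gives $|\calS_X(I)| \leq_A d_X(I)$ for a constant $A = A(X)$, so it suffices to bound $d_X(I)$ by a constant depending only on $X$. I will do this by producing a single curve $\gamma \in \calC(X)$ that lies uniformly close, in $d_X$, to every vertex cycle of $\tau_i$ for all $i \in I$; the triangle inequality then bounds $d_X(I)$.

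To extract $\gamma$, I exploit the inductive subsurface $Y$. Since $I \subset J_Y = I_Y \cap J_X \subset I_Y$, \refthm{FellowTravel} applies to $Y$. When $Y$ is not an annulus, that theorem says $\bdy Y$, in efficient position, is wide with respect to $\tau_i$ for every $i \in I$; because $Y \subsetneq X$ is essential, at least one component $\gamma$ of $\bdy Y$ is essential and non-peripheral in $X$, hence lies in $\calC(X)$. When $Y$ is an annulus, I take $\gamma$ to be the core of $Y$: \refthm{FellowTravel} says $\gamma$ is carried by and wide in $\tau_i$ for every $i \in I$, and essentiality of $Y$ inside $X$ puts $\gamma \in \calC(X)$.

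Now fix $i \in I$. Since $I \subset I_X$, the set $\pi_X(\tau_i)$ is nonempty, so there exists a vertex cycle $\alpha \in V(\tau_i)$ cutting $X$; \reflem{VerticesWide} ensures $\alpha$ is wide. Using \refthm{EffPos} together with \refcor{EfficientIntersection}, I realize $\alpha$ and $\gamma$ simultaneously in efficient position with respect to $\tau_i$ while realizing their geometric intersection. Since both are wide, each meets each rectangle of $N(\tau_i)$ and each complementary region in at most two arcs. As $|\calB(\tau_i)|$ is bounded by an index computation in terms of $\xi(S)$, this yields a uniform bound $i(\alpha, \gamma) \leq K(S)$. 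Because $\gamma \subset X$, intersections of $\pi_X(\alpha)$ with $\gamma$ in $X$ inject into intersections of $\alpha$ with $\gamma$ in $S$, so $i(\pi_X(\alpha), \gamma) \leq K(S)$. A standard curve-complex bound then controls $d_X(\pi_X(\alpha), \gamma)$ uniformly, and the triangle inequality gives the desired constant bound on $d_X(I)$.

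The hard part will be handling the intersection count when $\gamma$ is carried (as in the annulus case) rather than dual: carried curves lie entirely inside $N(\tau_i)$, so intersections occur only in the rectangles, and one must verify that wideness of two carried curves still produces the per-rectangle bound of four. A secondary subtlety is ensuring the existence of an essential, non-peripheral component of $\bdy Y$ in $X$ when $Y$ is not an annulus; this follows from $Y \subsetneq X$ being an essential subsurface, but deserves explicit justification.
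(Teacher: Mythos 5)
Your proof is correct and follows essentially the same route as the paper's. Both arguments use the structure theorem to deduce that $\bdy Y$ (or the core of $Y$) is wide at every index of $I$, use that wideness to produce a fixed curve in $\calC(X)$ at uniformly bounded $d_X$--distance from $\pi_X(\tau_i)$, apply the triangle inequality to bound $d_X(I)$, and then invoke \reflem{Straight}. The only cosmetic difference is that the paper passes to the cover $S^X$ and cites \reflem{GeneralBound}, whereas you perform the underlying intersection count directly in $S$; the two ``hard parts'' you flag at the end (four intersections per rectangle for a pair of wide curves, and the existence of a non-peripheral component of $\bdy Y$ when $\xi(Y)<\xi(X)$) both go through without difficulty.
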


\begin{proof}
Let $[p,q] = I$.  Applying \refthm{FellowTravel}, as $p \in J_Y
\subset I_Y$, the multicurve $\bdy Y$ is wide with respect to
$\tau_p$.  It follows that $\bdy Y$ is also wide with respect to
$\tau_p^X$.  Note that the curves of $V(\tau_p|X)$ are also wide with
respect to $\tau_p^X$.  Repeating this discussion for $q$, and then
applying \reflem{GeneralBound} and the triangle inequality gives a
uniform bound for $d_X(\tau_p, \tau_q)$.  The lemma now follows from
\reflem{Straight}.
\end{proof}

\subsection{Long and short intervals}

\begin{definition}
\label{Def:Short}
A straight subinterval $I$ for $X$ is {\em short} if $d_X(I) \leq
4\Reverse$.  Otherwise $I$ is {\em long}.
\end{definition}

By \reflem{Straight}, if $I$ is a short straight interval then
$|\calS_X(I)|$ is uniformly bounded by a constant depending only on
$X$.

Let $\Ind$ be the set of inductive subsurfaces $Y \subset X$.  Define
$\Ind' = \Ind \cup \{ X \}$.  Note that every maximal subinterval of
$J_X \setminus \cup_{Y \in \Ind} J_Y$ is straight, by
\reflem{Disjoint}.  We partition these maximal subintervals into the
sets $\Long$ and $\Short$ as the given interval is long or short
respectively.

\begin{lemma}
\label{Lem:LongBound}
There is a constant $A = A(X)$, independent of $\{ \tau_i \}$, so that
\[
\sum_{I \in \Long} |\calS_X(I)| \leq_A d_X(J_X).
\]
\end{lemma}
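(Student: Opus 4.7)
The plan is to bootstrap from \reflem{Straight}, which bounds $|\calS_X(I)|$ linearly in $d_X(I)$ on any straight subinterval, using the reverse triangle inequality \reflem{Reverse} applied to the $Q$--unparameterized quasi-geodesic $\{\pi_X(\tau_i)\}_{i=0}^N$ in $\calC(X)$ supplied by \refthm{LocalUnparam}. The long hypothesis $d_X(I) > 4\Reverse$ will be the lever that turns the per-interval estimates into a global one. I do not expect any single step to be the main obstacle; the delicate part is getting the constants to align.

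First I would enumerate the long subintervals in order inside $J_X = [p,q]$ as $I_j = [p_j,q_j]$ for $j = 1, \ldots, k$, so that $p \leq p_1 \leq q_1 \leq \cdots \leq p_k \leq q_k \leq q$. By \reflem{Straight} there is a constant $A = A(X)$ with
\begin{equation*}
\sum_{j=1}^k |\calS_X(I_j)| \;\leq\; A \sum_{j=1}^k d_X(I_j) + Ak,
\end{equation*}
so it suffices to bound both $\sum_j d_X(I_j)$ and $k$ linearly in $d_X(J_X)$.

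Next I would iterate \reflem{Reverse} along the $2k$ ordered projections $\pi_X(\tau_{p_1}), \pi_X(\tau_{q_1}), \ldots, \pi_X(\tau_{p_k}), \pi_X(\tau_{q_k})$ on the unparameterized quasi-geodesic and discard the nonnegative ``gap'' terms $d_X(\tau_{q_j}, \tau_{p_{j+1}})$, producing
\begin{equation*}
\sum_{j=1}^k d_X(I_j) \;\leq\; d_X(\tau_{p_1}, \tau_{q_k}) + 2(k-1)\Reverse \;\leq\; d_X(J_X) + 2k\Reverse,
\end{equation*}
where the final inequality uses \reflem{Reverse} twice more to enlarge the endpoints to those of $J_X$. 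Feeding in the long condition $d_X(I_j) > 4\Reverse$ on each summand on the left gives $4k\Reverse < d_X(J_X) + 2k\Reverse$, hence $k$ is linear in $d_X(J_X)$; substituting back into the previous display shows $\sum_j d_X(I_j)$ is as well. Combining these two estimates with the first display proves $\sum_{I \in \Long} |\calS_X(I)| \leq_{A'} d_X(J_X)$ for a suitable $A' = A'(X)$, as required. The one subtlety worth checking is that \reflem{Reverse} legitimately applies at each invocation: this requires the relevant subsurface projections to be nonempty, which holds because each $p_j, q_j$ lies in $J_X \subset I_X$, and by the definition of $I_X$ the induced tracks $\tau_{p_j}|X$ and $\tau_{q_j}|X$ are nonempty.
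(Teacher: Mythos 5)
Your proposal is correct and follows essentially the same route as the paper: apply \reflem{Straight} to each long straight interval, iterate \reflem{Reverse} along the ordered endpoints to bound $\sum d_X(I)$ by $d_X(J_X) + 2\Reverse|\Long|$, and use the long condition $d_X(I) > 4\Reverse$ to close the loop and bound $|\Long|$. The only difference is that you spell out the iteration of the reverse triangle inequality and the applicability of \refthm{LocalUnparam} a bit more explicitly than the paper does.
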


\begin{proof}
From \reflem{Straight} we deduce
\[
\sum_{I \in \Long} |\calS_X(I)| 
   \leq_A |\Long| + \sum_{I \in \Long} d_X(I) 
\]
where the first term on the right arises from addition of additive
errors.  
By the definition of a long straight interval and from
\reflem{Reverse} deduce
\[
4\Reverse|\Long| \leq \sum_{I \in \Long} d_X(I) 
     \leq d_X(J_X) + 2\Reverse|\Long|.
\]
Thus $2\Reverse|\Long| \leq d_X(J_X)$.  These inequalities combine to
prove the lemma, for a somewhat larger value of $A = A(X)$.
\end{proof}

\begin{lemma}
\label{Lem:ShortBound}
There is a constant $A = A(X)$, independent of $\{ \tau_i \}$, so that
\[
\sum_{I \in \Short} |\calS_X(I)| \leq_A |\Ind'|.
\]
\end{lemma}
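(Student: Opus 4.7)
The plan is a short counting argument, layering two uniform bounds.

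First I would bound $|\calS_X(I)|$ for each $I \in \Short$ by a constant depending only on $X$. Indeed, $I$ is by hypothesis a straight subinterval of $J_X$, so \reflem{Straight} gives $|\calS_X(I)| \leq_A d_X(I)$ for some $A = A(X)$. Since $I$ is short, $d_X(I) \leq 4\Reverse$, hence $|\calS_X(I)| \leq 4A\Reverse + A$, a constant depending only on $X$ (recall that $\Reverse$ was fixed once and for all in terms of $S$). Call this constant $A_1 = A_1(X)$.

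Next I would bound $|\Short|$ by $|\Ind'|$. By construction, the elements of $\Short \cup \Long$ are exactly the maximal subintervals of $J_X \setminus \bigcup_{Y \in \Ind} J_Y$. The union $\bigcup_{Y \in \Ind} J_Y$ is a union of at most $|\Ind|$ subintervals of $J_X$, so it has at most $|\Ind|$ maximal subintervals; removing these from $J_X$ leaves at most $|\Ind| + 1$ components. Thus
\[
|\Short| \,\leq\, |\Short| + |\Long| \,\leq\, |\Ind| + 1 \,=\, |\Ind'|.
\]

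Combining these two bounds gives
\[
\sum_{I \in \Short} |\calS_X(I)| \,\leq\, A_1 \cdot |\Short| \,\leq\, A_1 \cdot |\Ind'|,
\]
which is a quasi-bound in the sense of the paper's $\leq_A$ notation, taking $A = A_1$.

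There is no genuine obstacle here: all of the substantive work has already been done in the preceding lemmas (in particular \reflem{Straight}, which does the heavy lifting to pass from splittings to $d_X$) and in the definitions of straight, short, and inductive. The present lemma is simply the elementary observation that the number of gaps between inductive intervals is controlled by the number of inductive subsurfaces, so that once each short gap contributes only boundedly many splittings, the total is controlled by $|\Ind'|$.
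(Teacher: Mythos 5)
Your proof is correct and takes essentially the same route as the paper: apply \reflem{Straight} together with $d_X(I) \leq 4\Reverse$ to get a uniform per-interval bound, then observe $|\Short| \leq |\Ind'|$. The paper asserts this last inequality without comment; you have usefully filled in the one-line justification (removing at most $|\Ind|$ subintervals from $J_X$ leaves at most $|\Ind|+1 = |\Ind'|$ gaps).
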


\begin{proof}
By \reflem{Straight} the number of splittings in any short straight
interval is a priori bounded (depending only on $X$).  Since $|\Short|
\leq |\Ind'|$ the lemma follows. 
\end{proof}

\begin{lemma}
\label{Lem:Nested}
If $Z \in \Ind$ then 
\[
\card \big\{ Y \in \Ind \st Z \subset Y,\,\xi(Z) < \xi(Y) \big\} 
    \leq 2(\xi(X) - \xi(Z) - 1).
\]
\end{lemma}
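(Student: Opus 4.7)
The plan is to organize the relevant subsurfaces into at most two chains under inclusion and combine a clean topological length bound with a less clean nesting argument that uses the inductive hypothesis on $Y$.

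First I would dispense with the trivial case: when $\xi(X) - \xi(Z) \leq 1$ the set is empty, so I may assume $\xi(X) - \xi(Z) \geq 2$. Every $Y$ in the relevant set then has $\xi(Y) \in \{\xi(Z)+1, \dots, \xi(X)-1\}$, a range of exactly $\xi(X) - \xi(Z) - 1$ integer values. The factor of two in the target bound comes from organizing these $Y$'s into at most two chains, each of length at most $\xi(X)-\xi(Z)-1$.

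The clean topological ingredient is the chain-length bound: for any strictly nested sequence $Z \subsetneq Y_1 \subsetneq Y_2 \subsetneq \cdots \subsetneq Y_k \subsetneq X$ of essential subsurfaces, distinct isotopy classes related by strict inclusion have strictly increasing complexity (otherwise a deformation retract identifies them), so $k \leq \xi(X) - \xi(Z) - 1$. Thus any single chain of inductive $Y$'s containing $Z$ has the desired length bound.

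To complete the argument I would show that the inductive $Y \in \Ind$ with $Z \subset Y$ and $\xi(Y) > \xi(Z)$ partition into at most two chains under inclusion. Concretely, given any two such $Y_1, Y_2$, the claim is that $Y_1 \subset Y_2$ or $Y_2 \subset Y_1$, up to an ambiguity coming from adjoining or removing parallel annular collars along components of $\bdy Y_i$; this collar ambiguity is what produces the factor of two rather than a single chain.

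The main obstacle is establishing this nesting structure from the inductive hypothesis. My strategy would be to exploit the large projection diameters $d_{Y_i}(J_X) \geq \Induct(X)$ together with the quasi-geodesic $\{V(\tau_i|X)\}_{i \in J_X}$ in $\calC(X)$ furnished by \refthm{LocalUnparam}. If $\bdy Y_1$ and $\bdy Y_2$ were to interact non-trivially in $X \setminus Z$ (that is, their boundaries could not be made disjoint while containing $Z$ on the correct side), a Behrstock-type inequality applied to this quasi-geodesic, together with the large threshold built into $\Induct(X)$ (which exceeds $6\NoMotion + 2\SplitSub + 2\Drift(X) + 2$), would force one of the $d_{Y_i}(J_X)$ below the inductive threshold, contradicting $Y_i \in \Ind$. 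A case analysis based on the relative positions of $\bdy Y_1$ and $\bdy Y_2$ in the complement of $Z$ would then yield the nesting, at which point the chain-length bound and the collar ambiguity combine to give the stated estimate $2(\xi(X) - \xi(Z) - 1)$.
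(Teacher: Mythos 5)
Your high-level guess about the shape of the bound is right — the relevant $Y$'s do split into two chains, and each chain has length at most $\xi(X)-\xi(Z)-1$ by the strict monotonicity of complexity under strict inclusion. But the mechanism you propose for producing the two chains does not hold up, and this is where the real content of the lemma lives.

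The ``collar ambiguity'' explanation is a red herring: the lemma only counts $Y$ with $\xi(Y)>\xi(Z)$, and two essential subsurfaces differing by peripheral annular collars are isotopic (hence not distinct elements of $\Ind$), so no factor of two arises from that source. More seriously, two subsurfaces $Y_1, Y_2 \in \Ind$ both strictly containing $Z$ simply need not be nested, and no amount of Behrstock-style reasoning applied to the quasi-geodesic $\{V(\tau_i|X)\}$ in $\calC(X)$ will make them so without an extra structural input. The missing idea is a \emph{temporal} dichotomy along the splitting sequence, not a topological one: writing $J_Z=[p,q]$ and $J_X=[m,n]$, the wideness of $\bdy Z$ at both endpoints of $J_Z$ (from \refthm{FellowTravel}) forces $d_U(\tau_p,\tau_q)\le 2\NoMotion$ for any $U\supset Z$; since $d_U(\tau_m,\tau_n)\ge\Induct(X)$, the large projection must accumulate either on $[m,p]$ (``$U$ precedes $Z$'') or on $[q,n]$ (``$U$ succeeds $Z$''). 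That is the source of the factor of two. Within each of these two classes one then uses wideness of $\bdy V$ at $\max J_V$ to show that if $\max J_U\le\max J_V$ and $U\not\subset V$ one gets a contradiction with $U$ succeeding $Z$, so the succeeding (resp.\ preceding) surfaces are genuinely nested. Your proposal does not locate this dichotomy and hence does not close the gap.
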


\noindent
This follows from and is strictly weaker than Theorem~4.7 and
Lemma~6.1 of~\cite{MasurMinsky00}.  We give a proof, using our
structure theorem, to extract the necessary lower bound for
$\Induct(X)$.

\begin{proof}[Proof of \reflem{Nested}]
Suppose that $U \in \Ind$ contains $Z$.  Suppose $J_Z = [p,q]$ and
$J_X = [m,n]$.  Thus $\bdy Z$ is wide with respect to $\tau_p$.  So
$d_U(\tau_p, \bdy Z) \leq \NoMotion$, by the definition of
$\NoMotion$, and the same holds at the index $q$.  Thus $d_U(\tau_p,
\tau_q) \leq 2\NoMotion$.  The subsurface $U$ {\em precedes} or {\em
succeeds} $Z$ if $d_U(\tau_m, \tau_p)$ or $d_U(\tau_q, \tau_n)$,
respectively, is greater than or equal to $2\NoMotion + \SplitSub +
\Drift(X) + 1$.  Note that $U$ must precede or succeed $Z$ (or both)
as otherwise $d_U(\tau_m, \tau_n) < 6\NoMotion + 2\SplitSub +
2\Drift(X) + 2 \leq \Induct(X)$, a contradiction.

It now suffices to consider subsurfaces $U$ and $V$ that both succeed
and both contain $Z$.  If $\max J_U \leq \max J_V$ then $U \subset V$.
For, if not, $\bdy V$ cuts $U$ while missing $Z$.  Since $\bdy V$ is
wide at the index $r = \max J_V$ we deduce that
\begin{align*}
d_U(\tau_q, \tau_n) 
  \leq {} & d_U(\tau_q, \bdy Z) + d_U(\bdy Z, \bdy V) 
            + d_U(\bdy V, \tau_r) + \\
          & + d_U(\tau_r, \tau_{r+1}) + d_U(\tau_{r+1}, \tau_n) \\
  \leq {} & 2\NoMotion + \SplitSub + \Drift(X) + 1
\end{align*}
and this is a contradiction.  Thus the surfaces in $\Ind$ that
strictly contain $Z$, and succeed $Z$, are nested.  
\end{proof}

\begin{definition}
\label{Def:Assign}
{\em Assign} an index $r \in \calS_X(J_X)$ to a subsurface $Y \subset
X$ if $Y \in \Ind'$, $r \in J_Y$, $\tau_{r+1}|Y$ is a splitting of
$\tau_r|Y$ and there is no subsurface $Z \subset Y, \xi(Z) < \xi(Y)$
with those three properties.  
\end{definition}

\begin{lemma}
\label{Lem:InductiveBound}
There is a constant $A = A(X)$, independent of $\{ \tau_i \}$, so that
the number of splittings contained in inductive intervals is
quasi-bounded by $|\Ind| + \sum_{Y \in \Ind} d_Y(J_X)$. 
\end{lemma}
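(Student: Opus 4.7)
The plan is to prove this by induction on $\xi(X)$, assuming Proposition~\ref{Prop:Induct} holds for every essential subsurface of strictly smaller complexity; the base case is vacuous, since then $\Ind$ is empty. For the induction step, each index $r \in \calS_X(J_X)$ contained in some inductive interval is assigned, via Definition~\ref{Def:Assign}, to at least one minimal subsurface $Y^* \in \Ind'$. I split the count according to whether one may choose $Y^* \in \Ind$ (call such $r$ \emph{interior}) or whether the unique minimal choice is $Y^* = X$ (\emph{boundary} indices), and estimate each class in turn.

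For interior indices, choose a single $Y^* \in \Ind$ per index. Since $r$ being assigned to $Y$ forces $r \in \calS_Y(J_Y)$, the total interior count is at most $\sum_{Y \in \Ind} |\calS_Y(J_Y)|$. The inductive hypothesis yields $|\calS_Y(J_Y)| \leq_A d_{\calM(Y)}(J_Y)$, and the distance estimate Theorem~\ref{Thm:DistEst} applied in $Y$ (with cut-off $C \geq \CutOff(X)$) rewrites this as $\sum_{Z \subseteq Y}[d_Z(J_Y)]_C + O(1)$. Two tools now collapse the double sum: Lemma~\ref{Lem:Reverse} combined with the unparameterized quasi-geodesic of Theorem~\ref{Thm:LocalUnparam} gives $d_Z(J_Y) \leq d_Z(J_X) + 2\Reverse$; and Lemma~\ref{Lem:Nested} bounds the number of $Y \in \Ind$ strictly containing a given $Z \in \Ind$ by $2(\xi(X)-\xi(Z)-1)$. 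The resulting bound is $O(\xi(X)) \sum_{Z \in \Ind} d_Z(J_X) + O(|\Ind|)$, matching the target.

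For boundary indices, the structure theorem analysis of how splittings propagate (see the last claim in the proof of Theorem~\ref{Thm:FellowTravel}) shows that the central splitting arc of the $X$-split at $r$ lies in $X$ but fails to be contained in any $Y \in \Ind$ with $r \in J_Y$, hence each $\tau_{r+1}|Y$ is either a strict subtrack of or identical to $\tau_r|Y$. Strict subtrack transitions are uniformly bounded per $Y$ (Remark~\ref{Rem:Subtrack}). For identity transitions I would combine the wideness of $\bdy Y$ from Theorem~\ref{Thm:FellowTravel} with a stasis argument in the spirit of Claim~\ref{Clm:NoMotion}: a long run of $X$-splittings that leave $\tau|Y$ unchanged must also leave $V(\tau_r|Y)$ unchanged, and so is constrained by the finiteness of induced tracks modulo $\MCG(Y)$. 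Summing over $Y \in \Ind$ gives the $O(|\Ind|)$ boundary contribution.

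The main obstacle is the boundary case: showing that $X$-splittings which fail to descend to splittings on any induced track $\tau|Y$ cannot accumulate. A secondary subtlety in the interior case is choosing the cut-off $C$ in the distance estimate for $Y$ large enough that only subsurfaces in $\Ind$ (not merely inductive for $Y$) survive the cut-off, so that Lemma~\ref{Lem:Nested} may be applied with the right multiplicity constant. The calibration of thresholds $\Induct(X) \geq \CutOff(X)$ and $\Straight(X) \geq \Induct(X) + 2\Reverse$ fixed at the opening of this section is designed precisely to make both of these steps go through.
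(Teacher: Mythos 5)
Your interior case tracks the paper's proof essentially step for step: the bound by $\sum_{Y \in \Ind} |\calS_Y(J_Y)|$, the inductive hypothesis, the distance estimate in $Y$, the use of \reflem{Reverse} to replace $d_Z(J_Y)$ by $d_Z(J_X) + 2\Reverse$, and \reflem{Nested} to control the multiplicity are all present in the paper in the same order. That half is correct.

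The boundary case, however, contains a genuine gap. You argue that a long run of $X$-splittings leaving $\tau|Y$ unchanged must leave $V(\tau_r|Y)$ unchanged and is therefore ``constrained by the finiteness of induced tracks modulo $\MCG(Y)$.'' This does not follow. \refclm{NoMotion} bounds the number of splittings of the track on the surface \emph{whose vertex cycles are being held fixed}; it says nothing about how many times $\tau|X$ can split while $\tau|Y$ sits still. Indeed, arbitrarily many splittings of $\tau|X$ can occur in the part of $X$ disjoint from $Y$, and none of them will touch $\tau|Y$, so your stasis argument gives no bound whatsoever on these indices. The paper avoids this by taking the maximal interval $I \subset J_Y$ on which every element of $\calS_X$ is assigned to $X$ and proving $I$ is a \emph{straight interval for $X$} --- i.e.\ $\diam_Z(I) \leq \Straight(X)$ for \emph{every} essential $Z \subsetneq X$, not just inductive $Y$ (the non-inductive $Z$ are handled by $d_Z(J_X) < \Induct(X)$ together with \reflem{Reverse}). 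It then invokes \reflem{StraightInInduct}, which uses the wideness of $\bdy Y$ over $J_Y$ and \reflem{GeneralBound} to bound $d_X(I)$ uniformly, and then \reflem{Straight} (whose proof is where \refclm{NoMotion} is correctly applied, to $X$ rather than $Y$) to conclude $|\calS_X(I)|$ is bounded by a constant. The lemma you are missing is precisely \reflem{StraightInInduct}; without it, the boundary contribution is not controlled.
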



\begin{proof}
Fix $Y \in \Ind$.  Consider an index $r \in J_Y$ that is assigned to
$X$.  Let $I \subset J_Y$ be the maximal interval containing $r$ so
that all indices in $\calS_X(I)$ are assigned to $X$.  We now show
that $I$ is straight.  Let $Z$ be any essential subsurface of $X$ with
$\xi(Z) < \xi(X)$ and let $[r,s] = J \subset I$ be any subinterval.
If $J \cap I_Z = \emptyset$ then \refthm{FellowTravel} implies that
$d_Z(J) \leq \Drift$.  If $J$ meets $I_Z$ then, as no splittings of
$J$ are assigned to $Z$ we deduce that $\tau_s|Z$ is obtained from
$\tau_r|Z$ by sliding and taking subtracks only.  Thus $d_Z(J) \leq
\Branch \SplitSub \leq \Straight(X)$, as desired.

By \reflem{StraightInInduct} we find that $|\calS_X(I)|$ is
bounded.  It follows that the number of splittings in the inductive
intervals is quasi-bounded by $\sum_{Y \in \Ind} |\calS_Y(J_Y)|$.

By induction, \refprop{Induct} gives 
\[
|\calS_Y(J_Y)| \leq_A d_{\calM(Y)}(J_Y).  
\]
Taking a cutoff of $C = 1 + \max \{ \CutOff(Y), \Induct(X) + 2\Reverse
\}$ and applying the distance estimate \refthm{DistEst} we have a
quasi-inequality
\[
d_{\calM(Y)}(J_Y) \, \leq_\Err \sum_{Z \subset Y} [d_Z(J_Y)]_C.
\]
Since $d_Z(J_Y) \leq d_Z(J_X) + 2\Reverse$ for all $Z \subset Y$, it
follows that non-zero terms in the sum only arise for subsurfaces in
$\Ind'(Y) = \{ Z \in \Ind' \st Z \subset Y \}$.  Since $2\Reverse \leq
\Induct(X)$
we have $[d_Z(J_Y)]_C \leq 2\cdot d_Z(J_X)$.  Making $A = A(X)$ larger
if necessary we have
\begin{align*}
|\calS_Y(J_Y)| 
  &\leq_A \sum_{Z \in \Ind'(Y)} d_Z(J_X). \\
\tag*{\text{Thus}}
\sum_{Y \in \Ind} |\calS_Y(J_Y)| 
  &\leq_A |\Ind| + \sum_{Y \in \Ind} 
                       \sum_{Z \in \Ind'(Y)} d_Z(J_X) \\
  &\leq_{A} |\Ind| + \sum_{Y \in \Ind} d_Y(J_X) 
\end{align*}
where the final quasi-inequality follows from \reflem{Nested}, taking
$A$ larger as necessary.  Note that the term $|\Ind|$ on the middle
line arises by adding additive errors.  This proves
\reflem{InductiveBound}.
\end{proof}

Since every index in $\calS_X(J_X)$ is either in a long or short
straight interval or in an inductive interval, from Lemmas
\ref{Lem:LongBound}, \ref{Lem:ShortBound}, and
\ref{Lem:InductiveBound} and increasing $A$ slightly, we have:
\[
|\calS_X(J_X)| \leq_A  d_X(J_X) + |\Ind'| + 
                \sum_{Y \in \Ind} d_Y(J_X). 
\]
Note that $|\Ind'| \leq_A d_{\calM(X)}(J_X)$; this follows
from the hierarchy machine (in particular Lemma~6.2 and Theorem~6.10
of~\cite{MasurMinsky00}) and because $\Induct(X) \geq M_2(X)$,
the constant of Lemma~6.1 in \cite{MasurMinsky00}.  Finally,
\[
\sum_{Y \in \Ind'} d_Y(J_X) \leq_A d_{\calM(X)}(J_X)
\]
follows from the distance estimate (\refthm{DistEst}) and because
$\Induct(X) \geq \CutOff(X)$.
This completes the proof of \refprop{Induct} and thus the proof of
\refthm{Quasi}. \qed

\bibliographystyle{plain}
\bibliography{bibfile}
\end{document}